\title[Expansion for a hypoelliptic diffusion]{Small time expansion\\for a strictly hypoelliptic kernel}
\author{Pierre Perruchaud}
\address{Départment de mathématiques, Université du Luxembourg, Luxembourg.}
\email{pierre.perruchaud@uni.lu}
\newtheorem{theorem}{Theorem}[section]
\newtheorem{lemma}[theorem]{Lemma}
\newtheorem{maintheorem}{Theorem}
\newtheorem{proposition}[theorem]{Proposition}
\newtheorem{corollary}[theorem]{Corollary}
\theoremstyle{definition}
\newtheorem{definition}[theorem]{Definition}
\newtheorem{remark}[theorem]{Remark}
\newcommand{\pconv}{\operatornamewithlimits{\ast}}
\begin{document}

\begin{abstract}
We consider the kernel of a hypoelliptic diffusion beyond the case of sub-ellipticity or polynomial coefficients. We get a full asymptotic expansion for small times, based on a Duhamel-type comparison with an approximate polynomial kernel. As in the sub-elliptic case, some change of scale based on the geometry of some Lie brackets yields a non-trivial limit for the kernel as time goes to zero. Remarkably, a different scale is needed to observe a non-trivial large deviation principle.
\end{abstract}

\maketitle

\contentsline {section}{\tocsection {}{1}{Introduction}}{1}{section.1}%
\contentsline {section}{\tocsection {}{2}{Strategy and notations}}{7}{section.2}%
\contentsline {section}{\tocsection {}{3}{Convolution calculus}}{10}{section.3}%
\contentsline {section}{\tocsection {}{4}{Study of the approximation}}{17}{section.4}%
\contentsline {section}{\tocsection {}{5}{Convergence of the series}}{22}{section.5}%
\contentsline {section}{\tocsection {}{6}{Fokker-Planck equation}}{24}{section.6}%
\contentsline {section}{\tocsection {}{7}{Small time expansions}}{29}{section.7}%
\contentsline {section}{\tocsection {}{8}{The macroscopic and mesoscopic scales}}{31}{section.8}\smallskip%
\contentsline {section}{\tocsection {Appendix}{A}{Qualitative regularity for the diffusion}}{36}{appendix.A}%
\contentsline {section}{\tocsection {Appendix}{B}{Closed form for the Fourier transform}}{38}{appendix.B}\smallskip%
\contentsline {section}{\tocsection {Appendix}{}{References}}{41}{section*.10}%

\section{Introduction}
\label{sec:introduction}

We consider the following stochastic differential equation over $\mathbb R^3$, for $W$ a one-dimensional standard Brownian motion:
\begin{align}
\mathrm dx_t &= \cos(\phi_t)\mathrm dt, &
\mathrm dy_t &= \sin(\phi_t)\mathrm dt, &
\mathrm d\phi_t &= \mathrm dW_t.
\end{align}
The initial condition is a deterministic $p_0=(x_0,y_0,\phi_0)\in\mathbb R^3$. Informally, $\phi$ is a Brownian angle, and $t\mapsto(x_t,y_t)$ is the planar curve with unit speed whose direction is given by $\phi$. We refer to $t\mapsto p_t=(x_t,y_t,\phi_t)$ as planar kinetic Brownian motion. We are interested in the geometry of the kernel of $t\mapsto p_t$ for small times $t>0$.
\medskip

The hypoellipticity criterion of Hörmander shows that $p_t$ admits a smooth density, denoted by $p\mapsto u_t(p_0,p)$, for all times $t>0$. Note however that the diffusion term on its own could not ensure existence of a density, since the motion would be confined to the lines of constant position $(x,y)$; we call this degeneracy condition \emph{strict hypoellipticity,} as opposed to \emph{sub-ellipticity.} The diffusion $t\mapsto p_t$ is of interest to us because it is one of the simplest, most natural example of a strictly hypoelliptic diffusion with non-polynomial coefficients.

Elliptic kernels (where the diffusion term has maximal rank) have been extensively studied, in particular because of their relationship with index theory. In a slightly less regular direction, the theory of sub-elliptic processes is now an active research area, with a plethora of structural results, going hand-to-hand with the development of sub-Riemannian geometry. In contrast, kernels of strictly non-elliptic diffusions are very poorly understood. As for sub-elliptic diffusions, the coefficients of the equation define a certain structure on the tangent space, called a filtration, which is expected to describe the directions which are increasingly difficult for the process to reach. More of this heuristics will be discussed in the following section. However, this relationship is currently still conjectural, and to the author's knowledge not even a natural subclass of examples has been proven to obey this rule of thumb.

A filtration of the tangent space is the data of increasing subspaces of the tangent space at $p_0$, for every point $p_0$ of the base space. We will discuss in section \ref{ssec:hypoelliptic-introduction} how a filtration arises from the data of a hypoelliptic diffusion, and find that the one associated to $t\mapsto p_t$ is given at $p_0=0$ by
\[      \mathbb R\partial_\phi
\subset \mathbb R\partial_\phi
\subset \mathbb R\partial_y\oplus\mathbb R\partial_\phi
\subset \mathbb R^3
\subset \mathbb R^3
\subset \cdots \]
For instance, we expect the direction $\partial_x$ (the last one to appear in the filtration) to be the least accessible by the diffusion. More generally, at a given $p_0$, the direction $\partial_y$ above has to be replaced by the space-like vector orthogonal to the velocity, i.e. $-\sin(\phi_0)\partial_x+\cos(\phi_0)\partial_y$. The aim of this paper is to describe in which sense this filtration indeed governs the small scale behaviour of the kernel for small times. We now state a qualitative corollary of our results that illustrates this principle.

\begin{maintheorem}
\label{mth:firstorder}
There exists a smooth function $\check u:\mathbb R^3\to\mathbb R$ such that
\[ u_t\big(0,(t+t^2\check x,t^{3/2}\check y,t^{1/2}\check\phi)\big)
 = t^{-4}\check u(\check p)+O(t^{-3}) \]
uniformly over $(t,\check p)\in[0,1]\times\mathbb R^3$. Moreover, $\check u$ is of Schwartz class, and
\[ \{\check p:\check u(\check p)>0\}=\{\check p:\check y<-2\check x\}. \]
\end{maintheorem}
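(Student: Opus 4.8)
\emph{Identifying the limit kernel.} The plan is to read both assertions off the explicit form of the limit kernel furnished by the Duhamel comparison of the preceding sections. By that comparison and the small-time estimates of Section~7, $\check u$ is the time-one kernel of the rescaled approximate polynomial operator of Section~2; since $t^{-4}$ is exactly the Jacobian of the anisotropic dilation $\check p\mapsto(t+t^2\check x,\,t^{3/2}\check y,\,t^{1/2}\check\phi)$, this says that $\check u$ is the density of the weak limit as $t\to 0$ of the rescaled end-point $\big((x_t-t)/t^2,\ y_t/t^{3/2},\ \phi_t/t^{1/2}\big)$. Using $\phi_s=W_s$ together with Brownian scaling, and the expansions $\cos\phi=1-\tfrac12\phi^2+\cdots$, $\sin\phi=\phi+\cdots$ truncated at the orders dictated by the filtration, this limit is the law of the explicit Brownian functional
\[ \Xi:=\Big(-\tfrac12\int_0^1 B_s^2\,\mathrm{d}s,\ \int_0^1 B_s\,\mathrm{d}s,\ B_1\Big), \]
with $B$ a standard real Brownian motion from $0$; this is essentially the content of Sections~4 and~5, so only the two qualitative claims remain.

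\emph{Schwartz regularity.} That $\check u\in C^\infty$ is immediate from the hypoellipticity of the limit Fokker--Planck operator (Section~6), and also from the closed form for the characteristic function of $\Xi$ obtained in Appendix~B, an explicit expression in $\cos\sqrt{\mathrm{i}\lambda}$ and $\sin\sqrt{\mathrm{i}\lambda}$ which decays rapidly in all Fourier directions. Rapid decay of $\check u$ together with all of its derivatives then follows because $\Xi$ has finite moments of every order — $\int_0^1B$ and $B_1$ are Gaussian, and $\int_0^1B^2$ has a Laplace transform finite near the origin — so that the inverse Fourier integral built from the formula of Appendix~B can be differentiated under the integral and integrated by parts arbitrarily often, which yields the polynomially weighted bounds defining the Schwartz class.

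\emph{The positivity set, and the main obstacle.} One has $\check u\ge 0$, since $\check u$ is the limit of the non-negative densities $t^4u_t(0,\,\cdot\,)$ read in the dilated chart and the uniform remainder $O(t^{-3})$ cannot overcome the main term $t^{-4}\check u$; and $\check u$ vanishes identically off the support of the law of $\Xi$. By the support theorem that support is the closure of the reachable set of the limiting control system $\dot x = 1+(\text{quadratic in }\phi)$, $\dot y=(\text{linear in }\phi)$, $\dot\phi=u$; a direct reachability computation — bounding the attainable triples from above and below by explicit controls, and cross-checking against the domain of analyticity (Paley--Wiener) of the closed form of Appendix~B — identifies this support and shows its interior to be exactly $\{\check y<-2\check x\}$. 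Since $\check u$ is moreover a non-negative, not identically vanishing solution of the strictly hypoelliptic limit Fokker--Planck equation of Section~6, the strong maximum principle, in the Bony--Hörmander form adapted to the bracket-generating directions, forces it to be strictly positive on the interior of its support; combining the two facts gives $\{\check u>0\}=\{\check y<-2\check x\}$. The step I expect to be the main obstacle is precisely this last one: pinning down the support of $\Xi$ together with the non-vanishing of $\check u$ throughout the open set $\{\check y<-2\check x\}$ — that is, controlling the exact zero set of a density that sits at the boundary of a degenerate diffusion — which forces one to combine the control-theoretic description of the reachable set with the Hörmander condition underlying the Fokker--Planck analysis; by contrast, the identification of $\check u$ and the Schwartz estimates ride on machinery already in place.
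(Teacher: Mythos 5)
Your treatment of the expansion itself is consistent with the paper: the uniform $t^{-4}\check u+O(t^{-3})$ statement is exactly the $K=0$ case of Corollary \ref{cor:additivebigO}, obtained from the Duhamel representation of Theorem \ref{mth:Duhamel} together with the exact scaling identity $\widetilde u_t\big(0,P_{\sqrt t,0}(\check p)\big)=t^{-4}\check u(\check p)$, and your identification of $\check u$ as the density of $\xi=\big(-\tfrac12\int_0^1W^2,\int_0^1W,W_1\big)$ is the paper's. The two qualitative claims, however, contain genuine gaps. For the Schwartz property, your justification (finite moments of $\xi$, then ``differentiate under the integral and integrate by parts arbitrarily often'') does not address the real difficulty: finite moments give smoothness of the characteristic function $\hat u$, not its decay, and to bound $\check p^{\alpha}D^{\beta}\check u$ you need rapid decay of \emph{all derivatives} of $\hat u$ with polynomial weights. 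Even the decay of $\hat u$ itself is not ``immediate from the closed form'': the Gaussian factor in $(\mu,\nu)$ degenerates like $\exp\!\big(-c(\mu^2+\nu^2)/(1+\lambda^2)\big)$ as $|\lambda|\to\infty$, while the prefactor only yields roughly $\exp(-c|\lambda|^{1/2})$, so one must compare the regimes $|\lambda|^2\gtrless|\mu|+|\nu|$. The proof of Theorem \ref{th:checkuPsi} is devoted precisely to this: $\hat u=\exp(f)$ with $f$ holomorphic on a strip and of polynomial growth, a lower bound for $\Re(-f)$ by a positive power of $|\lambda|+|\mu|+|\nu|$, and Cauchy estimates showing every derivative of $\exp(f)$ is $\exp(f)$ times a polynomially bounded factor. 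Your sketch names the closed form of Appendix \ref{app:explicitFourier} but supplies none of these estimates, and the moment argument you do give proves the wrong thing.

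For the positivity set the gaps are sharper. First, the set: since $\big(\int_0^1W\big)^2\le\int_0^1W^2$, one has $\check y^2\le-2\check x$ (in particular $\check x\le0$) almost surely, so the half-space $\{\check y<-2\check x\}$ (a typo in the displayed statement, which your ``direct reachability computation'' purports to confirm) cannot be the positivity set; what the paper proves, in Propositions \ref{prop:support} and \ref{prop:strictsupport}, is $\{\check u>0\}=\{\check y^2<-2\check x\}$, with the inclusion of the support in $\{\check y^2\le-2\check x\}$ by Cauchy--Schwarz and the converse by Stroock--Varadhan; the ``Paley--Wiener cross-check'' you invoke has no content here since the support is unbounded. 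Second, the step you yourself single out as the main obstacle is not closed by citing a Bony--H\"ormander strong maximum principle: in this strictly hypoelliptic setting zeros propagate only along the diffusion direction (both ways) and the drift (one way), the relevant function is a time slice of a solution of a degenerate parabolic equation rather than a stationary solution, and strict positivity of densities at interior points of the support of hypoelliptic diffusions with drift is known to be delicate, so the maximum-principle appeal is an assertion, not a proof. The paper's argument is elementary and different: write $\check u$ as the self-convolution of $\widetilde u_{1/2}$ via Chapman--Kolmogorov, use the support theorem for each factor, and observe that near the midpoint $(\check x/2,\check y/2,0)$ the set where either factor vanishes is a union of two closed sets with empty interior, so both factors are simultaneously positive on a nonempty open set of intermediate points, making the integral positive. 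To repair your proposal you would need either to reproduce such a convolution argument or to supply an actual propagation-of-positivity (Harnack-type) result valid for this degenerate operator.
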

In other words, around the solution $t\mapsto(t,0,0)$ of the noiseless system, the randomness of the process $t\mapsto p_t$ along the $n$-th step of the filtration has a typical scale of $t^n$. A closed form for the Fourier transform of $\check u$ is given in Appendix \ref{app:explicitFourier}.
\medskip

To state a stronger version of the above result, we must introduce the main protagonist of the proof, namely an approximation of the defining equation for kinetic Brownian motion, and the associated kernel $\widetilde u$. The approximation of the equation will be constructed as a second-order Taylor expansion of the coefficients. For instance, $p\mapsto\widetilde u_t(0,p)$ is the density of the solution $t\mapsto\widetilde p_t$ of the following system, evaluated at time $t$:
\begin{align*}
\mathrm d\widetilde x_t &= \big(1-\widetilde\phi_t^2/2\big)\mathrm dt, &
\mathrm d\widetilde y_t &= \widetilde\phi_t\mathrm dt, &
\mathrm d\widetilde\phi_t &= \mathrm dW_t.
\end{align*}
For a full description of $\widetilde u$ with general initial condition, see section \ref{ssec:deftildeu} below.

For all fixed initial condition $p_0\in\mathbb R^3$, $u$ must satisfy the Fokker-Planck equation
\[ \partial_tu = - \cos(\phi)\partial_xu - \sin(\phi)\partial_yu + \frac12\partial_\phi^2u  =: Lu, \]
where $L$ is the infinitesimal generator of $t\mapsto p_t$. This is not the case for $\widetilde u$, but since it is an approximation of $u$, we can hope that the error
\begin{equation} \label{eq:defE} E := -\partial_t\widetilde u+L\widetilde u \end{equation}
is small. The error $u-\widetilde u$, at least formally, is the space-time convolution $E*u$ (say by considering the derivative of the space convolution $s\mapsto\widetilde u_{t-s}\star u_s$), and in the spirit of the Duhamel principle, we should iteratively get
\[ u = \sum_{k\geq0}(E*)^k\widetilde u. \]
This is actually the main result of this work.

\begin{maintheorem}
\label{mth:Duhamel}
Define $\widetilde u$ and $E$ as in section \ref{ssec:deftildeu} and equation \eqref{eq:defE}. Then the kernel $u$ admits the representation
\begin{equation}
\label{eq:Duhamelseries}
u = \sum_{k\geq0}(E*)^k\widetilde u,
\end{equation}
where the sum on the right converges uniformly over $(t,p_0,p)\in(0,1)\times\mathbb R^3\times\mathbb R^3$, as well as all its derivatives.

Moreover, the $k$-th term of the above sum is bounded over the same set by a constant multiple of $t^{-4+k}$, and the function $\widetilde u$ is related to the function $\check u$ of Theorem \ref{mth:firstorder} through the relation
\[ \widetilde u_t\big(0,(t+t^2\check x,t^{3/2}\check y,t^{1/2}\check\phi)\big)
= t^{-4}\check u(\check p). \]
\end{maintheorem}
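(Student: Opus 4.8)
The plan is to prove the three assertions in order, beginning with the scaling identity, which is elementary and pins down the exponents. Started from $p_0=0$ one has $\widetilde\phi_t=W_t$, $\widetilde y_t=\int_0^t W_s\,\mathrm ds$ and $\widetilde x_t=t-\tfrac12\int_0^t W_s^2\,\mathrm ds$, so Brownian scaling $(W_{ut})_{u\le1}\overset{d}{=}(\sqrt t\,B_u)_{u\le1}$ gives
\[
\Big(\tfrac{\widetilde x_t-t}{t^2},\ \tfrac{\widetilde y_t}{t^{3/2}},\ \tfrac{\widetilde\phi_t}{t^{1/2}}\Big)
\overset{d}{=}\Big(-\tfrac12\textstyle\int_0^1 B_u^2\,\mathrm du,\ \int_0^1 B_u\,\mathrm du,\ B_1\Big),
\]
a law that does not depend on $t$. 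Writing $\check u$ for its density — which exists, is smooth and of Schwartz class by the analysis of the rescaled system carried out in Section~\ref{ssec:deftildeu} and Section~4, or directly from the closed form of Appendix~\ref{app:explicitFourier} — and changing variables with Jacobian $t^{2}t^{3/2}t^{1/2}=t^{4}$ yields exactly $\widetilde u_t(0,p)=t^{-4}\check u(\check p)$; for a general initial condition the same computation applies after the affine change of coordinates of Section~\ref{ssec:deftildeu}. In particular $\widetilde u$ is, after the anisotropic rescaling at scales $(t^2,t^{3/2},t^{1/2})$ around the noiseless trajectory, bounded by $t^{-4}$ together with all its derivatives, with rapidly decaying tails.

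Next I would set up the convolution calculus and use it to bound the $k$-th term. The idea is to introduce Banach spaces of time-dependent kernels which, in the above rescaled coordinates, are $\lesssim t^{-a}$ together with all derivatives and with Schwartz tails, uniformly over $t\in(0,1)$; call $a$ the order. From the scaling identity, $\widetilde u$ has order $4$ and total mass $1$, and — by the a priori Gaussian-type bounds of Appendix~A — so does $u$. Since $\widetilde u$ solves the Fokker--Planck equation of the approximate process, $E$ reduces to the first-order expression $E=-(\cos\phi-1+\tfrac{\phi^2}{2})\partial_x\widetilde u-(\sin\phi-\phi)\partial_y\widetilde u$, which together with $|\cos\phi-1+\tfrac{\phi^2}{2}|\le\tfrac{\phi^4}{24}$, $|\sin\phi-\phi|\le\tfrac{|\phi|^3}{6}$ and the Schwartz decay of $\widetilde u$ shows that $E$ has order $4$ as well, but, crucially, vanishing total mass $\int E_r(p_0,q)\,\mathrm dq=0$. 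The central lemma is then that $f\mapsto E*f$ maps order $a$ to order $a-1$ (with all the structure preserved), the gained unit combining the $\int_0^t\mathrm ds$ of the time convolution with one cancellation coming from the vanishing mass of $E$. Granting this, $(E*)^k\widetilde u$ has order $4-k$, i.e.\ is $\lesssim t^{-4+k}$ uniformly, together with all its derivatives.

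From here the representation and its convergence follow. First one makes the identity $u=\widetilde u+E*u$ rigorous by differentiating $s\mapsto(\widetilde u_{t-s}\star u_s)(p_0,p)$: $\widetilde u_{t-s}(p_0,\cdot)$ solves the Fokker--Planck equation of the approximate process, $u_s(\cdot,p)$ solves the backward Kolmogorov equation of the true one, and an integration by parts in the middle variable produces $E_{t-s}\star u_s$; the boundary terms vanish because $\widetilde u$ is Schwartz and $u$ has Gaussian-type upper bounds (Appendix~A), and the unbounded coefficient $\phi^2$ is harmless since it only ever acts on the rapidly decaying $\widetilde u$. Iterating gives $u=\sum_{k=0}^{K}(E*)^k\widetilde u+(E*)^{K+1}u$ for every $K$. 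Splitting the series as $\sum_{k=0}^{4}(E*)^k\widetilde u$ (finitely many terms, the $k$-th $\lesssim t^{-4+k}$) plus $\sum_{j\ge0}(E*)^j v$ with $v:=(E*)^5\widetilde u$ bounded on $(0,1)\times\mathbb R^3\times\mathbb R^3$ together with all its derivatives, a standard Volterra estimate — using $\sup_{r<1}\|E_r(p_0,\cdot)\|_{L^1}<\infty$ and that the simplex of time orderings has volume $t^j/j!$ — shows $\|(E*)^j v\|\lesssim C^j t^j/j!$, so the tail converges uniformly with all derivatives and $(E*)^{K+1}u\to0$; uniqueness for the Volterra equation $w=\widetilde u+E*w$ among kernels of order $\le4$ (which holds because $w-u=(E*)^{n}(w-u)\to0$) then identifies the sum with $u$. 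For $t$ bounded away from $0$ all of this is classical parabolic regularity. Derivatives in the base point $p_0$ are handled by the same scheme, using translation invariance in $x,y$ to push the badly scaling $\partial_{x_0},\partial_{y_0}$ through the convolution chain by integration by parts.

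The main obstacle is the convolution lemma, specifically the convergence of $\int_0^t\cdots\mathrm ds$ despite the innermost factor $\widetilde u_s$ blowing up like $s^{-4}$: crude $L^1$--$L^\infty$ estimates diverge at both ends. Near $s=0$ one must exploit that $g_s(\cdot,p)$ is a probability density concentrating to a point, so that pairing $E_{t-s}(p_0,\cdot)$ against it merely samples $\|E_{t-s}\|_\infty\lesssim(t-s)^{-4}$; near $s=t$ one uses $\int E_{t-s}(p_0,\cdot)=0$ to subtract the leading value of $g_s$ and replace $(t-s)^{-4}$ by the integrable $(t-s)^{-4}\cdot(t-s)$, the extra power being measured with the correct anisotropic weights. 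Making all of this uniform down to $t=0$, including every derivative, and propagating it cleanly through the $k$-fold iteration with a summable combinatorial factor, is the technical heart of the argument.
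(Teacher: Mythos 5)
Your overall architecture (explicit solution of the approximate equation for the scaling identity, anisotropically rescaled kernel spaces, a convolution lemma gaining one power of $t$ per factor of $E$, then Duhamel iteration) is the same as the paper's, but the identification step contains a genuine gap: you repeatedly use quantitative a priori bounds on the unknown kernel $u$ that you have not established and that Appendix \ref{app:Fellerandco} does not provide. Proposition \ref{prop:exponentialFeller} only gives decay in the \emph{starting point} of $p_0\mapsto\mathbb P_{p_0}(p_t\in K)$, i.e.\ qualitative Feller-type information; it gives no pointwise bound $u_t\lesssim t^{-4}$ in the rescaled variables, no Gaussian upper bound in the forward variable, and no control of derivatives of $u$. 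Yet your argument needs exactly such bounds at several places: to claim ``so does $u$'' have order $4$, to justify the vanishing of boundary terms when differentiating $s\mapsto\widetilde u_{t-s}\star u_s$, to make sense of and estimate $E*u$, to prove the remainder $(E*)^{K+1}u\to0$, and for the uniqueness statement ``among kernels of order $\le4$'' (which presupposes $u$ lies in that class). Since anisotropic bounds of this type are essentially the content of the theorem, the argument as written is circular, unless you import an independent a priori estimate (e.g.\ from Malliavin calculus), which you do not. The paper is organized precisely to avoid convolving with $u$: it proves convergence of $\sum_k(E*)^k\widetilde u$ using only the explicit $\widetilde u$ and $E$ (Theorems \ref{th:convolutionab} and \ref{th:seriesconvergence}), shows the sum $U$ solves $\partial_tU=L^*U$ (Theorem \ref{th:FokkerPlanck}) and is an approximation of unity (Proposition \ref{prop:initialcondition}), and only then identifies $U=u$ through the forward--backward duality argument with the time-reversed diffusion (Lemma \ref{lem:timereversal}), in which $u$ enters solely through its Feller semigroup acting on continuous functions.

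A secondary point: the mechanism you propose for the central convolution lemma is misdiagnosed. The gain of one unit per convolution with $E$ does not come from $\int E_r(p_0,\cdot)=0$ (true, but unused in the paper); it comes from the anisotropic Jacobian $s^4$ of the rescaling combined with the time integral — equivalently, from pairing the $L^1$ norm of the factor evaluated at small time against the sup of the other factor, using the source-side representation on $[0,t/2]$ and the target-side representation on $[t/2,t]$. This is exactly why Definition \ref{def:Psia} requires both representations, and it is how Theorem \ref{th:convolutionab} obtains $\Psi^a*\Psi^b\subset\Psi^{a+b}$ with no cancellation. Your sketch near $s=t$ does not close as written: $(t-s)^{-4}\cdot(t-s)=(t-s)^{-3}$ is not integrable, and a genuine subtraction argument costs a derivative of the other factor, which at the worst anisotropic scale is no cheaper than the plain $L^1$--$L^\infty$ pairing. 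Note also that $E\in\Psi^1$ in the paper not because of mass cancellation but because $E=(\widetilde X-X)\widetilde u$, and the fourth- and third-order Taylor remainders in $\phi-\phi_0$ exactly compensate the scaling loss of $\partial_{\check x}$ and $\partial_{\check y}$ (Proposition \ref{prop:EinPsi}); your formula for $E$ captures this, but the subsequent bookkeeping should rest on that compensation rather than on $\int E=0$.
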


We will deduce Theorem \ref{mth:firstorder} above from this main result. The former also admits a higher order version.

\begin{maintheorem}
\label{mth:expansion}
There exists smooth functions $\check u_n:\mathbb R^3\to\mathbb R$ such that
\[ u_t\big(0,(t+t^2\check x,t^{3/2}\check y,t^{1/2}\check\phi)\big)
 = \sum_{n=0}^Nt^{-4+n/2}\check u_n(\check p)+O(t^{-7/2+N/2}) \]
uniformly over $(t,\check p)\in[0,1]\times\mathbb R^3$. Moreover, all the $\check u_k$ are of Schwartz class, $\check u_0$ is the function $\check u$ of Theorem \ref{mth:firstorder}, and $\check u_1=0$.
\end{maintheorem}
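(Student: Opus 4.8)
The plan is to feed Theorem~\ref{mth:Duhamel} into an anisotropic Taylor expansion around the noiseless trajectory $t\mapsto(t,0,0)$. Write $\delta_t$ for the dilation $\check p=(\check x,\check y,\check\phi)\mapsto\big(t+t^2\check x,\ t^{3/2}\check y,\ t^{1/2}\check\phi\big)$ and set $\Phi_k(t,\check p):=t^4\,\big[(E*)^k\widetilde u\big]_t\big(0,\delta_t\check p\big)$, so that
\[ t^4\,u_t\big(0,\delta_t\check p\big)=\sum_{k\geq0}\Phi_k(t,\check p) \]
by \eqref{eq:Duhamelseries}. Theorem~\ref{mth:Duhamel} already gives $\Phi_0=\check u$, independent of $t$, and $|\Phi_k|\leq C_k\,t^{k}$ uniformly on $(0,1)\times\mathbb R^3$. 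I would strengthen this to: for every $M\geq0$, $\Phi_k$ admits an expansion
\[ \Phi_k(t,\check p)=\sum_{m=0}^{M}t^{\,k+m/2}\,\check v_{k,m}(\check p)+O\big(t^{\,k+(M+1)/2}\big), \]
uniformly over $(t,\check p)\in(0,1]\times\mathbb R^3$, where every $\check v_{k,m}$ is of Schwartz class and $\check v_{0,0}=\check u$, $\check v_{0,m}=0$ for $m\geq1$ — the last two being exactly the self-similarity relation of Theorem~\ref{mth:Duhamel}.

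Granting this, Theorem~\ref{mth:expansion} follows by reindexing. Fix $N$ and split the Duhamel series at $k_0:=\lceil(N+1)/2\rceil$. The tail $\sum_{k\geq k_0}(E*)^k\widetilde u$ equals $(E*)^{k_0}u$ by associativity of the space–time convolution; since $u=O(t^{-4})$ and each application of $E*$ improves the order by $t$ — both part of Theorem~\ref{mth:Duhamel} — it is $O(t^{-4+k_0})=O(t^{-7/2+N/2})$. For the finitely many $k<k_0$ one has $k\leq\lfloor N/2\rfloor$, hence $M:=N-2k\geq0$ is an admissible truncation order and the corresponding remainder is $O\big(t^{\,k+(M+1)/2}\big)=O\big(t^{\,(N+1)/2}\big)$. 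Summing and collecting equal powers of $t^{1/2}$ gives
\[ t^4\,u_t\big(0,\delta_t\check p\big)=\sum_{n=0}^{N}t^{\,n/2}\,\check u_n(\check p)+O\big(t^{\,(N+1)/2}\big),\qquad\check u_n:=\sum_{0\leq k\leq n/2}\check v_{k,\,n-2k}, \]
a finite sum of Schwartz functions, hence Schwartz; dividing by $t^4$ is the claim of Theorem~\ref{mth:expansion}. Moreover $\check u_0=\check v_{0,0}=\check u$, and $\check u_1=\check v_{0,1}=0$, since the $k=1$ term contributes only from $n=2$ onwards.

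It remains to establish the expansion of the $\Phi_k$; the case $k=0$ is the final identity of Theorem~\ref{mth:Duhamel}. Recall from \eqref{eq:defE} that $E=L\widetilde u-\partial_t\widetilde u$, and since $\widetilde u$ solves the Fokker–Planck equation of the approximate system, $E=(L-\widetilde L)\widetilde u$ where $\widetilde L$ is the generator of $\widetilde p$; thus $E$ is obtained by applying to $\widetilde u$ the operator with vanishing $\partial_\phi^2$-part whose $\partial_x$- and $\partial_y$-coefficients are the order-$3$ Taylor remainders of $-\cos$ and $-\sin$ about the base angle — entire functions of the angular increment carrying no term of degree $\leq2$. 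In an anisotropic chart along the noiseless trajectory the angular increment sits at scale $t^{1/2}$, the derivatives scale as $\partial_x=t^{-2}\partial_{\check x}$, $\partial_y=t^{-3/2}\partial_{\check y}$, $\partial_\phi=t^{-1/2}\partial_{\check\phi}$, and $\widetilde u$ is \emph{exactly} self-similar; substituting the power series of those remainders turns the rescaled $E$ into a convergent series in $t^{1/2}$ whose coefficients are fixed polynomial-coefficient differential operators applied to $\check u$, hence of Schwartz class with uniformly summable tails. One then checks that the class of kernels carrying such uniform $t^{1/2}$-expansions with Schwartz coefficients is stable under the convolution $*$: each coefficient of $A*B$ is an explicit finite combination of space–time convolutions of the coefficients of $A$ and $B$, and the leading order of $A*B$ gains one power of $t$ from the time integration exactly as in the proof of Theorem~\ref{mth:Duhamel}. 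Iterating from $\widetilde u$ yields the expansion of $(E*)^k\widetilde u$, with each $\check v_{k,m}$ a finite superposition of convolutions of $\check u$ with polynomial-coefficient operators, and therefore Schwartz.

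The main obstacle is this stability step, carried out uniformly in space and time down to $t=0$. The iterated convolution integrates against intermediate kernels with arbitrary base points $q_0\in\mathbb R^3$, so one needs small-time expansions of $E_\tau(q_0,q)$, and of its convolution powers, uniform in $(q_0,q)\in\mathbb R^3\times\mathbb R^3$ and with enough decay to integrate and to control every remainder — precisely what the quantitative bounds on $\widetilde u$, $E$ and their derivatives from Sections~3--5 provide. One must also verify that no logarithmic term is produced, which holds because every time integral occurring is, after the anisotropic rescaling, of the form $\int_0^t s^{a}(t-s)^{b}\,\mathrm ds$ with exponents bounded away from $-1$. A convenient way to keep both the book-keeping and the Schwartz property transparent is to note that every coefficient encountered is obtained by applying explicit differential operators to the Gaussian-type kernel $\widetilde u$, whose Fourier transform is computed in Appendix~\ref{app:explicitFourier}, so that each step reduces to a Taylor expansion in the scaled Fourier variables followed by inversion.
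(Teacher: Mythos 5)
Your skeleton (split the Duhamel series, expand each term in half-integer powers of $t$, reindex, identify $\check u_0$ and $\check u_1$) is the same as the paper's, and the reindexing $\check u_n=\sum_{0\leq k\leq n/2}\check v_{k,n-2k}$ matches the paper's formula exactly. The problem is that the step you yourself flag as the ``main obstacle'' --- the uniform expansion $\Phi_k(t,\check p)=\sum_{m\leq M}t^{k+m/2}\check v_{k,m}(\check p)+O(t^{k+(M+1)/2})$ with Schwartz coefficients --- is where all the content lies, and your route to it (expand $E$ in powers of $t^{1/2}$, then prove that the class of kernels admitting such uniform expansions is stable under $*$, uniformly in the intermediate base points, and check that no logarithms appear) is not carried out; it would amount to re-proving a convolution theorem for a new function class. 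None of this is needed: by Theorem \ref{th:convolutionab} together with $\widetilde u,E\in\Psi^1$ (Theorem \ref{th:checkuPsi}, Proposition \ref{prop:EinPsi}), the $k$-th term $(E*)^k\widetilde u$ lies in $\Psi^{k+1}$, and by Definition \ref{def:Psia} this means precisely that $f_k(\tau,p_0,\check p):=\tau^{2(4-k)}\big((E*)^k\widetilde u\big)_{\tau^2}\big(p_0,P_{\tau,p_0}(\check p)\big)$ extends smoothly to $\tau=0$ and is Schwartz in $\check p$ locally uniformly in $\tau$. Your $\Phi_k$ is $t^kf_k(\sqrt t,0,\cdot)$, so the expansion you postulate is nothing but the Taylor expansion of $f_k$ in $\tau$ at $\tau=0$, with $\check v_{k,m}=\tfrac1{m!}\partial_\tau^mf_k(0,\cdot)$ automatically Schwartz and with a remainder controlled by $\sup|\partial_\tau^{M+1}f_k|$; smoothness up to the boundary rules out logarithmic terms with no further verification, and the detour through the Fourier transform of Appendix \ref{app:explicitFourier} is unnecessary. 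This is exactly how the paper proceeds (Corollary \ref{cor:tauexpansion}), so the gap is genuine as written but is filled by citing the $\Psi$-calculus already established rather than by building a parallel expansion calculus.

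A second, smaller gap is the tail estimate. Writing $\sum_{k\geq k_0}(E*)^k\widetilde u=(E*)^{k_0}u$ requires interchanging $E*$ with the infinite sum, and the bound $O(t^{-4+k_0})$ does not follow from Theorem \ref{mth:Duhamel} alone: $u$ is not known to lie in any $\Psi^a$, and the term-by-term bound $C_kt^{-4+k}$ says nothing about summability of the constants $C_k$. What makes the tail work is the quantitative factorial decay of Proposition \ref{prop:regularconvolution}, through which the series still converges uniformly after multiplication by a negative power of $t$ (proof of Theorem \ref{th:seriesconvergence}); this is how the paper bounds the remainder in Corollary \ref{cor:additivebigO}, and you should bound the tail directly this way rather than via $(E*)^{k_0}u$.
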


This shows that every higher order correction in the asymptotics also follows the behaviour prescribed by the filtration discussed above.
\medskip

In contrast, we show that for larger scales, the filtration does not seem to be the right framework to analyse the diffusion. Let us restrict the discussion to $p_0=0$. In the above we considered all functions as depending on the rescaled variables
\[ \Big(\frac{x-t}{t^2},\frac y{t^{3/2}},\frac\phi{t^{1/2}}\Big), \]
a scale we refer to as \emph{microscopic.} The unscaled variables correspond to the \emph{macroscopic} scale, and the intermediate \emph{mesoscopic} variables are given by
\[ \Big(\frac xt,\frac yt,\phi\Big). \]

In these larger scales, one expects the density to decay exponentially. A common framework for the description of exponential decay of densities is that of large deviation principles.

\begin{definition}
Let $I:\mathbb R^3\to[0,\infty]$ be lower semicontinuous. We say that $(u_t)_{t>0}$ satisfies a large deviation principle with rate function $I$ in the scale $\check p=\check p_t(p)$ if for all $U\subset F\subset\mathbb R^3$ with $U$ open and $F$ closed,
\[   -\inf_{\check p\in U}I(\check p)
\leq \liminf_{t\to0}t\log u_t(0,\check p_t^{-1}U) \\
\leq \limsup_{t\to0}t\log u_t(0,\check p_t^{-1}F)
\leq -\inf_{\check p\in F}I(\check p), \]
where we identify the density $p\mapsto u_t(0,p)$ with its corresponding probability measure.
\end{definition}

\begin{maintheorem}
\label{mth:LDP}
There exists a rate $I_\mathrm{macro}:\mathbb R^3\to[0,\infty]$ such that $u$ satisfies a large deviation principle with rate $I_\mathrm{macro}$ in the macroscopic scale. Moreover, this rate admits the explicit expression
\[ I_\mathrm{macro}:p = (x,y,\phi) \mapsto
   \begin{cases}
     \frac{|\phi|^2}2&\text{for }(x,y)=0, \\
     \infty&\text{else.}
   \end{cases} \]

There exists a rate $I_\mathrm{meso}:\mathbb R^3\to[0,\infty]$ such that $u$ satisfies a large deviation principle with rate $I_\mathrm{meso}$ in the mesoscopic scale. Moreover, $I_\mathrm{meso}(\check p)$ is finite if and only if $|(\check x,\check y)|<1$ or $\check p=(1,0,0)$.
\end{maintheorem}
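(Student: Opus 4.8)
The plan is to reduce both statements to Schilder's theorem via Brownian scaling. Throughout take $p_0=0$, so that $\phi_t=W_t$, $x_t=\int_0^t\cos W_s\,ds$ and $y_t=\int_0^t\sin W_s\,ds$. Since $(W_{ts})_{s\in[0,1]}$ has the law of $(\sqrt t\,B_s)_{s\in[0,1]}$ for a standard Brownian motion $B$, the substitution $s=tu$ gives, exactly in law,
\[ (x_t,y_t,\phi_t)=\Phi_t(\sqrt t\,B),\qquad\Big(\tfrac{x_t}{t},\tfrac{y_t}{t},\phi_t\Big)=\Phi(\sqrt t\,B), \]
where $\Phi(w)=\big(\int_0^1\cos w_s\,ds,\ \int_0^1\sin w_s\,ds,\ w_1\big)$ and $\Phi_t(w)=\big(t\int_0^1\cos w_s\,ds,\ t\int_0^1\sin w_s\,ds,\ w_1\big)$, both continuous $C_0([0,1])\to\mathbb R^3$ for the uniform norm since $\cos$ and $\sin$ are Lipschitz. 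Schilder's theorem gives an LDP for $\mathrm{Law}(\sqrt t\,B)$ on $C_0([0,1])$ at speed $1/t$ with good rate $J(h)=\frac12\int_0^1|\dot h_s|^2\,ds$ on the Cameron--Martin space $H^1_0$ and $+\infty$ elsewhere. As $u_t(0,\check p_t^{-1}A)=\mathbb P\big(\check p_t(p_t)\in A\big)$, the two asserted principles are exactly the standard large deviation principles for $\mathrm{Law}(\check p_t(p_t))$.

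For the macroscopic scale $\check p_t$ is the identity. Because $|x_t|,|y_t|\le t$ deterministically, for each $\delta>0$ we have $\mathbb P\big(|(x_t,y_t,\phi_t)-(0,0,\phi_t)|>\delta\big)=0$ once $t\sqrt2<\delta$, so the families $(x_t,y_t,\phi_t)$ and $(0,0,\phi_t)=(0,0,\sqrt t\,B_1)$ are exponentially equivalent at speed $1/t$. The contraction principle applied to $w\mapsto(0,0,w_1)$ gives an LDP for the second family with good rate $\inf\{J(h):h(1)=\phi\}$, which is the empty infimum $+\infty$ when $(x,y)\ne0$ and is $\frac12|\phi|^2$ (realised by $h_s=\phi s$, optimal by Cauchy--Schwarz) when $(x,y)=0$; exponential equivalence then transfers this LDP, yielding $I_{\mathrm{macro}}$ as stated.

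For the mesoscopic scale $\big(\tfrac{x_t}{t},\tfrac{y_t}{t},\phi_t\big)=\Phi(\sqrt t\,B)$ with $\Phi$ independent of $t$, so the contraction principle applies directly and produces an LDP at speed $1/t$ with good (hence lower semicontinuous) rate $I_{\mathrm{meso}}(\check p)=\inf\{J(h):h\in H^1_0,\ \Phi(h)=\check p\}$, finite precisely on the image of $\Phi$. The remaining task is to identify that image in three steps. (i) $\big(\int_0^1\cos h_s\,ds,\int_0^1\sin h_s\,ds\big)$ is the barycentre of a probability measure on the unit circle, hence lies in the closed unit disc, so $I_{\mathrm{meso}}=\infty$ when $|(\check x,\check y)|>1$. (ii) If $|(\check x,\check y)|=1$ and $\Phi(h)=\check p$, then $\int_0^1\big(1-\check x\cos h_s-\check y\sin h_s\big)\,ds=0$ with nonnegative integrand, so $\check x\cos h_s+\check y\sin h_s\equiv1$; the Cauchy--Schwarz equality case forces $(\cos h_s,\sin h_s)\equiv(\check x,\check y)$, and evaluating at $s=0$ gives $(\check x,\check y)=(1,0)$, whence $h\equiv0$ and $\check\phi=0$; conversely $h\equiv0$ realises $(1,0,0)$ at zero cost. (iii) Every $\check p$ with $|(\check x,\check y)|<1$ lies in the image, with $I_{\mathrm{meso}}$ locally bounded there. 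Combining the three, $\{I_{\mathrm{meso}}<\infty\}=\{|(\check x,\check y)|<1\}\cup\{(1,0,0)\}$, which is the claim.

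Step (iii) is where the real work lies. Fix $\check p$ with $|(\check x,\check y)|<1$ and pick angles $\theta_1,\theta_2,\theta_3$ such that $(\check x,\check y)$ lies in the interior of the triangle spanned by their images on the circle. Build $h$ over $[0,1]$ by concatenating a short arc from $0$ to a lift of $\theta_1$, a plateau at $\theta_1$, a short arc to $\theta_2$, a plateau at $\theta_2$, a short arc to $\theta_3$, a plateau at $\theta_3$, and a final short arc from $\theta_3$ to $\check\phi$. The barycentre of $h$ equals $\sum_i\ell_i(\cos\theta_i,\sin\theta_i)$, where $\ell_i$ is the length of the $i$-th plateau, plus a small explicit correction coming from the finitely many, bounded-length arcs; choosing the $\ell_i$ in the simplex so as to absorb this correction is a fixed-point equation solved by Brouwer, which gives $\Phi(h)=\check p$ exactly with $J(h)<\infty$, uniformly for $\check p$ in a small set. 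I expect this exact reachability to be the only delicate point: the transition arcs make strictly positive, though controllable, contributions to the barycentre, so they cannot simply be shrunk away without blowing up the energy, and the fixed-point step — or, equivalently, an inverse-function argument based on the surjectivity of the derivative $d\Phi(h)$, which holds whenever $|(\check x,\check y)|<1$ — is what makes the target attainable with bounded cost.
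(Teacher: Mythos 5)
Your proposal is correct, and for the mesoscopic half it is the same argument as the paper's: rescale to $\sqrt t\,\widetilde W$, apply Schilder plus the contraction principle to the continuous map $F=\Phi$, and then identify where the contracted rate is finite (this is Proposition \ref{prop:LDPmeso} together with item (1) of Proposition \ref{prop:Imeso}). The differences are at the edges. For the macroscopic half, the paper proves a weak LDP box by box (reducing each box to the Gaussian LDP for $W_t$ via the deterministic bound $|(x_t,y_t)|\leq t$) and then upgrades it using an exponential tightness estimate analogous to Proposition \ref{prop:exponentialFeller}; you instead observe that $(x_t,y_t,\phi_t)$ and $(0,0,W_t)$ are exponentially equivalent at speed $1/t$ (in fact identical up to a deterministic error $\leq t\sqrt2$) and transfer the LDP, which is cleaner and dispenses with the separate escape estimate. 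For the finiteness set, you prove more than the paper writes down: the boundary case $|(\check x,\check y)|=1$, which the paper dismisses with ``clearly $\Gamma(p)=\varnothing$'', gets a correct equality-case argument, and the interior reachability, which the paper settles with ``the existence is clear on a drawing'', gets an explicit arcs-and-plateaus construction. One simplification there: the Brouwer fixed-point (or inverse-function) step is not needed, because the barycentre contribution of the four transition arcs is a fixed vector once the angles, the arc duration $\varepsilon$ and $\check\phi$ are chosen --- it does not depend on the plateau lengths $\ell_i$ --- so you only need to solve the affine system $\sum_i\ell_i(\cos\theta_i,\sin\theta_i)=(\check x,\check y)-c_\varepsilon$, $\sum_i\ell_i=1-4\varepsilon$, $\ell_i\geq0$, which is solvable for $\varepsilon$ small precisely because $(\check x,\check y)$ lies in the interior of the triangle; the resulting $h$ is piecewise smooth, hence of finite energy.
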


This result is very closely related to the result \cite{Franchi-KBM} of Franchi, discussing an asymptotic development in the mesoscopic case. Our Theorem \ref{mth:LDP} is much more elementary than both Franchi's result and our above expansion. We discuss this relation in more details in section \ref{ssec:literature}.

We see that the macroscopic scale can only recover the part of the filtration induced by the diffusive part. By contrast, in the sub-elliptic case, the rate function corresponds to (one half of the square of) the Carnot-Carathéodory distance induced by the first level of the filtration, from which the higher levels can be extrapolated. In other words, the whole filtration is visible at the macroscopic level. In fact, in both cases, the macroscopic scale is completely unaware of the drift term, and is only sensible to noise; however, in the strictly hypoelliptic case, the drift becomes crucial to the study of the dynamics.

The mesoscopic scale remains mysterious. If $I_\mathrm{meso}$ being finite over an open set suggests that it is the right scale for a large deviation principle, there seem to be no obvious \emph{a priori} reason why $t$ would be the right factor. The rate function is not expected to be the square of a distance, although we do not prove it is not. It is also unclear that the rate function gives more information about the filtration.

\subsection{Hypoelliptic geometry}
\label{ssec:hypoelliptic-introduction}

Before we describe the filtration associated to $p$, we give some motivation from the elliptic and sub-elliptic cases. Let $p$ be a diffusion, say the solution of a stochastic differential equation of the form
\[ \mathrm dp_t = \sigma(p_t)\circ\mathrm dW_t + b(p_t)\mathrm dt \]
in $\mathbb R^d$, with smooth coefficients. Let us discuss first the most regular case of elliptic diffusions. This corresponds to the matrix $\sigma$ being surjective at every point, i.e. of rank $d$. In that case, some geometric information is easily extracted from $\sigma$, namely the Riemannian metric
\[ g:p_0\mapsto(\sigma(p_0)\sigma(p_0)^*)^{-1}. \]
Assuming $p$ is defined for all times, for instance if $\sigma$ and $b$ are uniformly bounded, then $p_t$ is well-defined, and it is known that
\[ \frac{p_t-p_0}{\sqrt t} \]
is asymptotically Gaussian for small $t$, with covariance $g(p_0)^{-1}$. If we rephrase this dynamical result in the language of kernels, it can be proved \cite[Chapter 3]{Kolokoltsov} that
\[      u_t\big(p_0,p_0+\sqrt t\check p\big)
 = t^{-d/2}\check u_{g(p_0)}(\check p)\big(1+O(\sqrt t)\big) + O\big(\exp(-C/t)\big), \]
where $C>0$ is a constant, the big $O$ notation stands for functions that can be bounded in absolute value by constant multiple of the argument (for $(t,p_0,\check p)$ in a compact set), and $\check u_g$ is the density of a centred Gaussian with variance $g^{-1}$. Explicitly,
\[ \check u(\check p)
 = \frac{\sqrt{\det g}}{(2\pi)^{d/2}}\exp\big(-\left|\check p\right|_g^2/2\big). \]
This relates the analytic behaviour of $u_t$ for small times to geometric information encoded in $\sigma$, namely the Riemannian metric $g$. It is worth noting that the information we get from $\sigma$ is actually independent of our representation of $p$, in the sense that any choice of $\tilde\sigma$ and $\tilde b$ inducing the dynamics of $p$ would yield the same metric $g$; one way of proving it is noting that the principal symbol of the generator of $p$ is precisely $g^{-1}$. It also turns out that in the higher order expansion of $u_t$ in powers of $\sqrt t$, the next term features the scalar curvature, thus providing another link between the small time behaviour of the density and the geometry induced by $p$.
\medskip

The situation becomes much richer when $\sigma$ does not have full rank. For instance, there has been much attention devoted to the family of sub-elliptic diffusions, which can be described (according to most authors) as follows. Define $\mathcal E$ as the smallest set of vector fields that contains all smooth fields $X$ with $X(p_0)\in\operatorname{im}\sigma(p_0)$ everywhere, and is closed under Lie bracket. For all $p_0$, the space $\mathcal E_{p_0}$ of all $X(p_0)$ for $X\in\mathcal E$ is a subspace of the tangent space at $p_0$. Then the diffusion $p$ is sub-elliptic if $\sigma(p_0)$ has constant rank and $\mathcal E_{p_0}$ is the whole space $\mathbb R^d$ at all points $p_0\in\mathbb R^d$. In that case, the collection of subspaces $\operatorname{im}\sigma$ is called a maximally non-integrable distribution, and it comes with a metric $g$ defined as above by $(\sigma\sigma^*)^{-1}$ (it is invertible over the distribution of hyperplanes). Going one step further, a sub-elliptic diffusion $p$ induces a natural filtration
\[ \mathcal E^1=\operatorname{im}\sigma\subset\mathcal E^2\subset\cdots\subset\mathcal E^\infty=\mathcal E \]
of the tangent spaces, given roughly by how many Lie brackets are necessary to generate a given vector.%
\footnote{The filtration collapses in finitely many steps at every given point, but not necessarily globally.}
Again, these objects depend on $\sigma$ and $b$ only through $p$, or more precisely its generator.

If the rank of these spaces $\mathcal E^n$ is constant (the so-called equiregular case), the rule of thumb is that the size of $p_t$ in the direction of $\mathcal E^n$ will be roughly $t^{n/2}$. Let us try to turn this intuition into a rigorous statement, through the lens of the density. We can decompose the tangent space at every point as the direct sum of some distributions $\mathcal F^n$, where $\mathcal F^1=\mathcal E^1$ and $\mathcal F^{n+1}$ is chosen (non-canonically) such that $\mathcal E^{n+1}=\mathcal E^n\oplus\mathcal F^{n+1}$. If we define the endomorphism $P_{\tau,p_0}$ of the tangent space at $p_0$ as the operator acting on $\mathcal F^n$ by multiplication by $\tau^n$, then one can show \cite[Theorem B]{CdVHT} that there exists a smooth non-zero function $\check u$ such that
\[ u_t\big(p_0,p_0+P_{\sqrt t,p_0}(\check p)\big) = t^{-D/2}\check u(p_0,\check p)+t^{-D/2}O(\sqrt t), \]
where $D$ is the sum of the weighted dimensions $n\cdot\operatorname{dim}\mathcal F^n$, so that $\det P_{\tau,p_0}=\tau^D$. In a more probabilistic language, the covariance matrix of $p_t-p_0$ is
\[ \mathbb E\big[(p_t-p_0)(p_t-p_0)^*\big]
\approx P_{\sqrt t,p_0}\Sigma P_{\sqrt t,p_0}^* \]
for some covariance matrix $\Sigma$, in the same sense as above. This shows that the filtration given by the $\mathcal E^n$ is encoded in the kernel, and a more careful analysis would link $g$ to the small time limit $\check u$.
\medskip

The diffusion we consider is not sub-elliptic: the curves $\{p:(x,y)=(x_0,y_0)\}$ are tangent to the image of $\sigma$, so the spaces $\mathcal E^n$ all have rank 1. However, we can ensure that $p_t$ admits a density under the weaker hypoelliptic hypothesis. Namely, define $\widetilde{\mathcal E}$ to be the smallest collection of vector fields containing $\mathcal E$ and stable under the adjoint action of $b$, i.e. under the action of the Lie bracket $[b,\cdot]$. The diffusion $p$ is said to be hypoelliptic if for all $p_0\in\mathbb R^d$, the subspace $\widetilde{\mathcal E}_{p_0}$ is actually the whole tangent space. As for sub-elliptic diffusions, general hypoelliptic ones also define a filtration of the tangent spaces in the following way. The initial collection $\widetilde{\mathcal E}^1$ of admissible vector fields are those with values in $\operatorname{im}\sigma$ (and by convention $\widetilde{\mathcal E}^0=\{0\}$). Inductively, $\widetilde{\mathcal E}^{n+2}$ consists of fields of one of three types: elements of $\widetilde{\mathcal E}^{n+1}$; brackets of elements of $\widetilde{\mathcal E}^{n+1}$; brackets of the form $[b,X]$, for $X\in\widetilde{\mathcal E}^n$. The union of all $\widetilde{\mathcal E}^n$ is the previously defined $\widetilde{\mathcal E}$, so the diffusion is hypoelliptic if and only if this construction stabilises (locally) to the whole tangent space.

In the case of our diffusion, by definition we have
\[ \widetilde{\mathcal E}^1 = \{f\partial_\phi,f:\mathbb R^3\to\mathbb R\}. \]
Brackets of vector fields of this type cannot generate any other field, so the next contribution to the filtration comes from the action of $b$ and will appear in the step $\widetilde{\mathcal E}^3$ of the filtration. We find
\[ [b,f\partial_\phi](0) = (\partial_xf)(0)\partial_\phi - f(0)\partial_y, \]
hence $\widetilde{\mathcal E}^3_0=\mathbb R\partial_y\oplus\mathbb R\partial_\phi$, as announced. Once can then check that indeed $\widetilde{\mathcal E}^4$ contains all smooth vector fields, which completes the filtration. As discussed above, we expect the variable $p_t$ to go as far as $t^{n/2}$ in the direction $\widetilde{\mathcal E}^n$, a fact indeed reflected in Theorem \ref{mth:firstorder}.
\medskip

In the same way that the next term in the expansion of $u$ is related to curvature in the elliptic case, we expect the second order term to be related to some notion of curvature for $T^1\mathbb R^2$. However, this curvature is constant, and so cannot be read from this first example. More insight should come from the study of kinetic Brownian motion on a surface, which reduces to that of compact perturbations of the Euclidean plane.

\subsection{State of the art}
\label{ssec:literature}

For sub-elliptic diffusions, and more generally sub-Riemannian geometry, the book \cite{ABB} is a very complete reference. Historically, the sub-elliptic equivalent of our large deviation result Theorem \ref{mth:LDP} is due to Léandre \cite{LeandreMin,LeandreMaj}, while the expansion of Theorem~\ref{mth:expansion} on the diagonal was given by Ben Arous in \cite{BenArous}. During the writing of this paper, the work \cite{CdVHT} was published on the arXiv, giving a local version of Theorem \ref{mth:expansion} for a wide range of sub-elliptic diffusions.

In terms of strictly hypoelliptic results, the author is aware of the following achievements.
\begin{itemize}
\item Several works of Franchi tackle the behaviour of the kernel of similar diffusions \cite{Franchi-Dudley,Franchi-Quad}. The unpublished work \cite{Franchi-KBM} considers the exact diffusion discussed here; in a sense, it gives a pointwise version of the large deviation principle of Theorem \ref{mth:LDP}. All these considerations offer a point of view somewhat intermediate between our Theorems \ref{mth:expansion} and \ref{mth:LDP}; namely, they apply to the value of the kernel (rather than a large deviation principle) but they take place in some mesoscopic scale (rather than some microscopic one). This means that Franchi's expansion depend on \emph{some} filtration even in the mesoscopic scale, which we cannot detect in our large deviation principle.

For instance, in the specific case of kinetic Brownian motion, the limit
\[ \lim_{t\to0}\log u_t\big(0,(t\check x,t\check y,\check\phi)\big) \]
(note the mesoscopic scale) is not a continuous function of $\check p$, but it is continuous over $\mathcal D^{i+1}\setminus\mathcal D^i$, for $\mathcal D$ the filtration
\[ 0\subset\mathbb R\partial_x\subset\mathbb R\partial_x\oplus\mathbb R\partial_y\subset\mathbb R^3\subset\cdots \]
The article of Franchi does not provide a geometric interpretation of this filtration. It does not seem that our methods could recover Franchi's results, nor do they readily shine light on this filtration.

\item In the case of less regular coefficients, the question of well-posedness and estimates on the density have been considered. For instance, in the article \cite{CdR}, Chaudru de Raynal, Honoré and Menozzi use a Duhamel principle to show existence and Schauder estimates for the kernel of a chain of oscillators (this covers for instance the case of the $n$th derivative being a standard Brownian motion).

\item The work of Kolokoltsov, for instance in the book \cite{Kolokoltsov}, describes a few cases for which a systematic approach can be followed. He discusses in particular a class of so-called \emph{regular} examples in Chapter 2, imposing amongst other conditions that the coefficients be quadratic, and gives in Chapter 3.6 an example where the same methods can be applied to a non-regular diffusion. However, the author was not able to apply these methods to the case at hand; see Chapter IV.2 of \cite{PThesis} for partial results.
\end{itemize}

\subsection*{Acknowledgments}
The author would like to thank the University of Rennes 1 and University of Notre Dame, where parts of this article were written.

I am indebted to Vassili Kolokoltsov, from the University of Warwick for many discussions around various questions involving the kernel of kinetic Brownian motion. His contribution ramifies far beyond the present work.

My gratitude also goes to Konstantin Izyurov, from the university of Helsinki, who provided the proof of Theorem \ref{th:checkuPsi} as an answer to a Mathoverflow question \cite{Izyurov}.

\section{Strategy and notations}
\label{sec:strategy}

\subsection{Approximate diffusion}
\label{ssec:deftildeu}

The general idea is that if two diffusions are solution to similar stochastic differential equations, their density should be similar as well, and in fact we will be able to expand one using the other as a building block. As much as possible, the quantities related to this approximation will be decorated with a tilde.

Recall that planar kinetic Brownian motion is the solution $t\mapsto p_t=(x_t,y_t,\phi_t)$ of the system of equations
\begin{equation} \label{eq:trueSDEcomponentwise} \left\{
\begin{aligned}
\mathrm dx_t & = \cos(\phi_t)\mathrm dt \\
\mathrm dy_t & = \sin(\phi_t)\mathrm dt \\
\mathrm d\phi_t & = \mathrm dW_t,
\end{aligned}
\right. \end{equation}
with initial condition $p_0$. This is the version over $\mathbb R^2$ of a known process: it has been given different names in the literature, such as kinetic Brownian motion or circular Langevin diffusion. In higher dimension, or on manifolds, the component $\phi$ represents a Brownian motion on the sphere, and $(x,y)$ is the movement of an object whose velocity is given by $\phi$.

For readability, it is convenient to introduce the vector fields $X$, $Y$ and $\Phi$ as follows:
\begin{equation}\label{eq:XYPhi}
    M_p := \begin{pmatrix} X(p) & Y(p) & \Phi(p) \end{pmatrix}
        := \begin{pmatrix}
             \cos(\phi) & - \sin(\phi) & 0 \\
             \sin(\phi) &   \cos(\phi) & 0 \\
             0 & 0 & 1
           \end{pmatrix},
\end{equation}
so $X$ is pointing in the direction indicated by $\phi$, $Y$ in the orthogonal direction, and $\Phi$ changes the angle. Their names come from their value at $0$: $X(0)=\partial_x$, $Y(0)=\partial_y$, $\Phi(0)=\partial_\phi$. Accordingly, we can rewrite equation \eqref{eq:trueSDEcomponentwise} as
\begin{equation} \label{eq:trueSDE}
\mathrm dp_t = X(p_t)\mathrm dt + \Phi(p_t)\mathrm dW_t,
\end{equation}
whose generator is given by
\[ L := X + \frac12\Phi^2. \]

The approximate equation will be given as a Taylor expansion about the initial condition $p_0$; as such, it is actually not a single approximate equation, but a family of them. Noting that
\[ X(p) = \cos(\phi-\phi_0)X(p_0) + \sin(\phi-\phi_0)Y(p_0), \]
we set $\widetilde\Phi := \Phi$ and
\[ \widetilde X(p) = \widetilde X^{(p_0)}(p) := \Big(1-\frac{(\phi-\phi_0)^2}2\Big)X(p_0) + (\phi-\phi_0)Y(p_0), \]
and define the approximation of equation \eqref{eq:trueSDE} at $p_0$ as
\begin{equation} \label{eq:tildeSDE}
\mathrm d\widetilde p_t = \widetilde X(\widetilde p_t)\mathrm dt + \widetilde\Phi(\widetilde p_t)\mathrm dW_t
\end{equation}
with initial condition $p_0$. In coordinates, this reads as
\begin{equation} \label{eq:tildeSDEcomponentwise} \left\{
\begin{aligned}
\mathrm d\widetilde x_t & = \left(1-\frac{(\widetilde \phi_t-\phi_0)^2}2\right)\cos(\phi_0)\mathrm dt
               - (\widetilde \phi_t-\phi_0)\sin(\widetilde \phi_t)\mathrm dt \\
\mathrm d\widetilde y_t & = \left(1-\frac{(\widetilde \phi_t-\phi_0)^2}2\right)\sin(\phi_0)\mathrm dt
               + (\widetilde \phi_t-\phi_0)\cos(\widetilde \phi_t)\mathrm dt \\
\mathrm d\widetilde \phi_t & = \mathrm dW_t.
\end{aligned}
\right. \end{equation}
The associated generator, for fixed $p_0$, is given by
\[ \widetilde L := \widetilde X + \frac12\widetilde\Phi^2. \]

\subsection{Duhamel principle}

Define the density $(t,p)\mapsto u_t(p_0,p)$ of the solution to equation \eqref{eq:trueSDE} started at $p_0$, and $(t,p)\mapsto\widetilde u_t(p_0,p)$ that of \eqref{eq:tildeSDE}. These functions are solution to the Fokker-Planck equations
\begin{align} \label{eq:FokkerPlanck}
&\left\{ \begin{aligned}
\partial_tu &= L^*u \\
u_0 &= \delta_{p_0}
\end{aligned} \right. &
&\text{and} &
&\left\{ \begin{aligned}\partial_t\widetilde u &=\widetilde L_{p_0}^*\widetilde u \\
\widetilde u_0 &= \delta_{p_0}.
\end{aligned} \right.
\end{align}
Over $\mathbb R^*_+\times\mathbb R^3$, these kernels are smooth since the equations are hypoelliptic; see Appendix \ref{app:Fellerandco}.

We expect that these densities are similar. To prove such a result, we will rely on the Duhamel principle. Letting $\mathsf e^{tL}$ be the semigroup associated with $L$, which acts on measures on the right, we have formally
\[ u_t-\widetilde u_t
 = \widetilde u_0\mathsf e^{tL} - \widetilde u_t\mathsf e^{0L}
 = \int_0^t\frac{\mathrm d}{\mathrm ds}\big(\widetilde u_{t-s}\mathsf e^{sL}\big)\mathrm ds
 = \int_0^t\big((-\partial_t+L^*)\widetilde u\big)_{t-s}\mathsf e^{sL}\mathrm ds. \]
Since $u$ is the fundamental solution to the Fokker-Planck equation associated with kinetic Brownian motion, we know that the action of the semigroup is no other than convolution with $u$. Moreover, $E:=(-\partial_t+L^*)\widetilde u$ is the error we get in the approximation: if it were zero, then $\widetilde u$ would be a solution of the Fokker-Planck equation for $u$. Writing $*$ (resp. $\star$) for the space-time (resp. space) convolution, we get
\[ u_t-\widetilde u_t
 = \int_0^tE_{t-s}\star u_s\mathrm ds
 = (E*u)_t. \]
This is a representation of $\widetilde u$ as a linear image of $u$: $\widetilde u = (\operatorname{id}-E*)u$. If we could invert this relation using the formal representation
\[ (\mathrm{id}-T)^{-1} = \sum_{k\geq0}T^k, \]
valid for, say, an operator $T$ of small norm in a Banach space, we could express $u$ as an expansion based on $\widetilde u$. It turns out that we can, and moreover we have reasonable understanding of the terms in these asymptotics. For instance, we will show that the $k$-th term is of order $O(t^{-4+k})$ as $t$ goes to zero. This is our main result; see Theorem \ref{mth:Duhamel} of the introduction for a statement.

\subsection{Scalings}
\label{ssec:scalings}

As discussed in the introduction, we expect the typical displacement in a given direction to be of order a power of $t$, whose exponent depends on the direction. Most of our analysis will take place in this scale; the corresponding dilated versions of all our objects will be decorated with a haček, with for instance $\check p$ representing a generic point in microscopic space.

The heuristics for the exact expression of the transform $\check p\mapsto p$ follow from the geometry of the iterated Lie brackets, as seen in section \ref{ssec:hypoelliptic-introduction}, but we can in this case give a simple intuition. The angle $\phi$ is a standard Brownian motion, so its size is roughly $t^{1/2}$. In the direction $Y(p_0)$, the variation is the integral of $\sin(\phi_t-\phi_0)\simeq t^{1/2}$, so we expect it to be of order $t^{3/2}$. Finally, in the direction $X(p_0)$, the tendency is to go straight ahead with velocity 1; however, the next term is the integral of $\cos(\phi_t-\phi_0)-1$, which means that the correction should be of order $t^2$.

The easiest way to guess, however, is probably to realise that the approximate diffusion can be explicitly solved. Namely,
\[ \widetilde p_t = p_0 + \Big(t-\frac12\int_0^tW_s^2\mathrm ds\Big)X(p_0) + \Big(\int_0^tW_s\mathrm ds\Big)Y(p_0) + W_t\Phi(p_0). \]
Recalling the definition of $M_p$ in equation \eqref{eq:XYPhi}, and introducing a rescaled Brownian motion $W^{(t)}:s\mapsto W_{st}/\sqrt t$, we find
\begin{equation}\label{eq:tildepintegral}
M_{p_0}^{-1}(\widetilde p_t-p_0)
 = \begin{pmatrix}
     t-t^2\frac12\int_0^1(W^{(t)}_s)^2\mathrm ds \\
     t^{3/2}\int_0^1W^{(t)}_s\mathrm ds \\
     t^{1/2}W^{(t)}_1
   \end{pmatrix}.
\end{equation}
Thus, for the case of the approximate diffusion, the scalings are, in a sense, exact.

The above reasons suggest to define the dilation operators
\begin{align}\label{eq:defTP}
T_\tau & := \begin{pmatrix}
              \tau^4 & 0 & 0 \\
              0 & \tau^3 & 0 \\
              0 & 0 & \tau
            \end{pmatrix}, &
P_{\tau,p_0}(\check p) := p_0 + \tau^2X(p_0) + M_{p_0}T_\tau\check p.
\end{align}
Let us set $\check u$ the density of
\begin{equation}
\label{eq:defxi}
\xi:= \begin{pmatrix}
        -\frac12\int_0^1W_s^2\mathrm ds \\
        \int_0^1W_s\mathrm ds \\
        W_1
      \end{pmatrix}.
\end{equation}
This is the same main contribution $\check u$ that features in Theorem \ref{mth:firstorder}. According to equation \eqref{eq:tildepintegral}, this is equivalent to defining
\begin{equation}\label{eq:defchecku}
\check u(\check p) := \tau^8u_{\tau^2}(p_0,P_{\tau,p_0}(\check p))
\end{equation}
for any fixed $\tau>0$ and $p_0\in\mathbb R^3$, where the factor $\tau^8$ corresponds to the Jacobian of $P_{\tau,p_0}$.

We will also use the following end-point version of $P$:
\begin{equation}
P_{-\tau,p}(\check p_0) := p - \tau^2X(p) + M_pT_\tau\check p_0.
\end{equation}
It is not quite an inverse of $P$ in the sense that we do not always have $\check p_0=-\check p$ for fixed $(\tau,p_0,p)$, but it is simpler and sufficient for our purposes.

\subsection{Function spaces}

We will define later a space $\check\Psi$ of well-behaved functions, see Definition~\ref{def:checkPsi}. It consists roughly of smooth functions of $(\tau,p_0,\check p)$ that are Schwartz in $\check p$, uniformly in $p_0$ and locally uniformly in $\tau$. For instance, we will see that the function $\check u$ defined above belongs to $\check\Psi$.

Define $\Psi^a$ as the space of functions $A:(t,p_0,p)\mapsto A_t(p_0,p)$ such that the rescaled version
\[ \check A:(\tau,p_0,\check p)\mapsto (\tau^2)^{5-a}A_{\tau^2}(p_0,P_{\tau^2,p_0}(\check p)) \]
belongs to $\check\Psi$. Since $\check u$ is in $\check\Psi$, then $\widetilde u$ must belong to $\Psi^1$ according to equation \eqref{eq:defchecku}. We will also see that the error $E$ is in $\Psi^1$ as well, see Proposition \ref{prop:EinPsi} and in fact it will be clear then that $\widetilde u$ has been defined exactly so that $E$ would satisfy this property.

The central piece of machinery is the fact that these spaces behave well under convolution: for $a,b>0$, we will see that $\Psi^a*\Psi^b\subset\Psi^{a+b}$. In the Duhamel formula, the $k$th convolution $(E*)^ku$ will then be an element of $\Psi^{k+1}$, behaving roughly as $O(t^{-4+k})$. For $k$ large, these become very regular, and a closer look at the convolutions shows that the series of equation \eqref{eq:Duhamelseries} actually converges in the smooth topology, in a rather strong sense --- see Theorem \ref{th:seriesconvergence}. Then we can prove that the limit satisfies the defining equation for $u$, so a uniqueness argument is enough to establish equation \eqref{eq:Duhamelseries}. The corollaries stated in the introduction follow, albeit not immediately.

\subsection{Outline}

We prove first the convergence of the Duhamel series. In section~\ref{sec:convolutions}, we introduce the spaces $\Psi^a$, and prove the fundamental convolution result in Theorem \ref{th:convolutionab}. We then prove in section~\ref{sec:checku} that the rescaled $\check u$ actually belongs to $\check\Psi$. As discussed above, these are the main ingredients in the proof of the convergence, which we establish in section \ref{sec:convergenceseries}.

In section \ref{sec:FokkerPlanck}, we show that the limit of the above series is solution to the defining equation for $u$ (Theorem \ref{th:FokkerPlanck}), with the right initial condition (Proposition \ref{prop:initialcondition}). A uniqueness argument then establishes equation \eqref{eq:Duhamelseries}. Section \ref{sec:corollaries} is dedicated to consequences of this representation, namely the derivation of small time expansions.

In the final section \ref{sec:LDP}, we discuss two large deviation results, as an illustration of the stark differences between our case and the better-behaved sub-elliptic setting.

\section{Convolution calculus}
\label{sec:convolutions}

\subsection{Function spaces}
We start by defining our spaces $\check\Psi$ and $\Psi^a$. For a given multi-index $\alpha=(\alpha_x,\alpha_y,\alpha_\phi)$ in $\mathbb N^3$ and smooth function $f:\mathbb R^3\to\mathbb R$ (or $\mathbb R^3\times E\to F$), set $D_p^\alpha f = \partial_x^{\alpha_x}\partial_y^{\alpha_y}\partial_\phi^{\alpha_\phi}f$.

\begin{definition}
\label{def:checkPsi}
We denote by $\check\Psi$ the set of smooth functions $\check A:[0;\infty)\times\mathbb R^3\times\mathbb R^3\to\mathbb R$ satisfying the following boundedness conditions. Denoting by $\tau$, $p_0$ and $\check p$ the variables of $\check A$,
\begin{equation}
\label{eq:defcheckpsi}
\sup_{0\leq\tau\leq T}\sup_{p_0,\check p}
   |\check p|^k\cdot\left|\partial_\tau^nD_{p_0}^\alpha D_{\check p}^\beta\check A(\tau,p_0,\check p)\right|
 < \infty
\end{equation}
for any time horizon $T>0$, any multi-indices $\alpha,\beta\in\mathbb N^3$, and any indices $n,k\in\mathbb N$.
\end{definition}

Note that they are smooth up to the boundary, so we can see them as restrictions of smooth functions defined on $\mathbb R\times\mathbb R^3\times\mathbb R^3$.

\medskip

The transformations $P_{\tau,p_0}$ and $P_{-\tau,p}$ are well-defined and smooth up to $\tau\geq0$. Moreover, they are invertible for positive times, a fact we use in the following definition.

\begin{definition}
\label{def:Psia}
For a fixed parameter $a\in\mathbb R$, a function $A:\mathbb R_+^*\times\mathbb R^3\times\mathbb R^3\to\mathbb R$ can be written as
\begin{equation*}
\stepcounter{equation}
\tag{\theequation~a}
\label{eq:defpsiasource}
A_t(p_0,p) = t^{-5+a}\check A^0\big(\sqrt t,p_0,P_{\sqrt t,p_0}^{-1}(p)\big),
\qquad\check A^0\in\check\Psi
\end{equation*}
if and only if it can be written as
\begin{equation*}
\tag{\theequation~b}
\label{eq:defpsiatarget}
A_t(p_0,p) = t^{-5+a}\check A^1\big(\sqrt t,p,P_{-\sqrt t,p}^{-1}(p_0)\big),
\qquad\check A^1\in\check\Psi.
\end{equation*}

We write $\Psi^a$ for the set of such functions.
\end{definition}

In other words, $A$ is in $\Psi^a$ if
\[ (\tau,p_0,\check p)\mapsto (\tau^2)^{5-a}A_{\tau^2}(p_0,P_{\tau,p_0}(\check p)) \]
extends to a smooth function on $\mathbb R\times\mathbb R^3\times\mathbb R^3$ rapidly decaying with respect to $\check p$; alternatively, if
\[ (\tau,p,\check p)\mapsto (\tau^2)^{5-a}A_{\tau^2}(P_{-\tau,p}(\check p),p) \]
extends to such a function.

\begin{proof}
Write $A$ as
\[ A_t(p_0,p) = t^{-5+a}\check A^0\big(\sqrt t,p_0,P_{\sqrt t,p_0}^{-1}(p)\big)
              = t^{-5+a}\check A^1\big(\sqrt t,p,P_{-\sqrt t,p}^{-1}(p_0)\big). \]
We must show that $\check A^0$ extends to a function in $\check\Psi$ if and only if $\check A^1$ does. But
\begin{align*}
\check A^0(\tau,p_0,\check p) & = \check A^1(\tau,P_{\tau,p_0}(\check p),\check p_0(\tau,p_0,\check p)),
\end{align*}
where
\begin{align*}
\check p_0(\tau,p_0,\check p)
& = P_{-\tau,\underbrace{P_{\tau,p_0}(\check p)}_{=:p}}^{-1}(p_0) \\
& = T_\tau^{-1}M_p^{-1}(p_0-p+\tau^2X(p)) \\
& = -(T_\tau^{-1}M_p^{-1}M_{p_0}T_\tau)\check p + \tau^2T_\tau^{-1}M_p^{-1}\big(X(p)-X(p_0)\big).
\end{align*}
All in all, we get
\[ \check A^0(\tau,p_0,\check p)
 = \check A^1\big(\tau,p_0+(*)_1\check p + (*)_2,(*)_3(-\check p + (*)_4)\big), \]
\begin{align*}
& \begin{aligned}
    (*)_1 & = M_{p_0}T_\tau, & (*)_2 & = \tau^2X(p_0), & (*)_3 & = T_\tau^{-1}M_p^{-1}M_{p_0}T_\tau,
  \end{aligned} \\
& (*)_4 = \tau^2T_\tau^{-1}M_{p_0}^{-1}\big(X(p)-X(p_0)\big).
\end{align*}
It turns out that $(*)_3$ and $(*)_4$ are smooth functions of $\tau$ and $(\phi-\phi_0)/\tau$, with all derivatives bounded by polynomials in those variables. This result will actually be used in various places, so we isolate the proof in the forthcoming Lemma \ref{lem:crucialquantities}. Since in our case this second variable is but $\check\phi$, the expression above makes it clear that derivatives of $\check A^0$ are bounded by polynomials in $\tau$, $\check p$ and the derivatives of $\check A^1$. In particular, if $\check A^1$ is in $\check\Psi$ then $\check A^0$ must be as well.

The reverse implication is similar; the equivalent representation is
\[ \check A^1(\tau,p,\check p_0)
 = \check A^0\big(\tau,p+(*)'_1\check p_0 - (*)'_2,(*)'_3(-\check p_0 - (*)'_4)\big), \]
\begin{align*}
& \begin{aligned}
   (*)'_1 & = M_pT_\tau, & (*)'_2 & = \tau^2X(p), & (*)'_3 & = T_\tau^{-1}M_{p_0}^{-1}M_pT_\tau,
  \end{aligned} \\
& (*)'_4 = \tau^2T_\tau^{-1}M_p^{-1}\big(X(p_0)-X(p)\big). \qedhere
\end{align*}
\end{proof}

\begin{lemma} \label{lem:crucialquantities}
Define the functions $\mathcal M$ and $\mathcal X$, respectively matrix- and vector-valued, by
\begin{align*}
\mathcal M(\tau,p_0,p) & = T_\tau^{-1}M_{p_0}^{-1}M_pT_\tau &
&\text{and} &
\mathcal X(\tau,p_0,p) & = \tau^2T_\tau^{-1}M_{p_0}^{-1}\big(X(p)-X(p_0)\big).
\end{align*}
Then there exists smooth functions $\check{\mathcal M}$ and $\check{\mathcal X}$ such that
\begin{align} \label{eq:checkMX}
  \mathcal M(\tau,p_0,p) & = \check{\mathcal M}(\tau,(\phi-\phi_0)/\tau) &
  &\text{and} &
  \mathcal X(\tau,p_0,p) & = \check{\mathcal X}(\tau,(\phi-\phi_0)/\tau).
\end{align}
Moreover, $\check{\mathcal M}$ and $\check{\mathcal X}$, along with all their derivatives, are bounded by polynomials in their variables $\tau$ and $\check\phi$.
\end{lemma}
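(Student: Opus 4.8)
The plan is to compute $\mathcal M$ and $\mathcal X$ explicitly; once this is done the asserted structure is transparent. Observe first that the matrix $M_q$ of \eqref{eq:XYPhi} is block diagonal: it is the $2\times2$ rotation of angle $\phi$ in the $(x,y)$-plane, direct sum with the identity on the $\phi$-line. Hence $M_{p_0}^{-1}M_p$ is the rotation of angle $\phi-\phi_0$ (again direct sum with $1$), which already exhibits dependence on $\phi-\phi_0$ alone. Since $T_\tau=\operatorname{diag}(\tau^4,\tau^3,\tau)$ is diagonal, conjugating by it preserves this block structure and multiplies the $(i,j)$ entry by $\tau^{d_j-d_i}$, with $(d_1,d_2,d_3)=(4,3,1)$. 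The entries $(1,3),(2,3),(3,1),(3,2)$ of $M_{p_0}^{-1}M_p$ vanish identically, so the only entries of $\mathcal M$ to inspect are the four of the rotation block,
\[ \cos(\phi-\phi_0),\qquad -\tau^{-1}\sin(\phi-\phi_0),\qquad \tau\sin(\phi-\phi_0),\qquad \cos(\phi-\phi_0), \]
together with the harmless $1$ in position $(3,3)$. For $\mathcal X$, using that $X(q)$ is the first column of $M_q$ one gets $M_{p_0}^{-1}\big(X(p)-X(p_0)\big)=\big(\cos(\phi-\phi_0)-1,\ \sin(\phi-\phi_0),\ 0\big)$, so after multiplying by $\tau^2T_\tau^{-1}$ the three components become $\tau^{-2}\big(\cos(\phi-\phi_0)-1\big)$, $\tau^{-1}\sin(\phi-\phi_0)$ and $0$.

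The point is now that substituting $\phi-\phi_0=\tau\check\phi$ turns every apparent singularity at $\tau=0$ into something regular, because the trigonometric numerators vanish to the matching order at the origin. Concretely, writing $s(z)=\sin(z)/z$ and $c(z)=\big(\cos(z)-1\big)/z^2$, both entire, one has $\tau^{-1}\sin(\tau\check\phi)=\check\phi\,s(\tau\check\phi)$ and $\tau^{-2}\big(\cos(\tau\check\phi)-1\big)=\check\phi^2\,c(\tau\check\phi)$, each a smooth function of $(\tau,\check\phi)$. This yields the desired $\check{\mathcal M}$ and $\check{\mathcal X}$: every entry is of the form $\tau^a\check\phi^b\,h(\tau\check\phi)$ with $a,b\in\mathbb N$ and $h\in\{\cos,\sin,s,c\}$ (some entries being just $0$ or $1$), and in particular \eqref{eq:checkMX} holds.

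The derivative bounds follow from this normal form and the chain rule. The functions $\cos,\sin,s,c$ have all derivatives bounded on $\mathbb R$; differentiating $h(\tau\check\phi)$ in $\tau$ (resp.\ $\check\phi$) brings down a factor $\check\phi$ (resp.\ $\tau$) times another bounded function, and differentiating the monomial prefactor $\tau^a\check\phi^b$ only lowers exponents. Hence $\partial_\tau^m\partial_{\check\phi}^n$ applied to any entry of $\check{\mathcal M}$ or $\check{\mathcal X}$ is a polynomial in $(\tau,\check\phi)$ times a bounded function, which is the stated bound. I do not foresee a genuine obstacle: the only thing requiring care is the bookkeeping that each negative power of $\tau$ produced by the conjugation $T_\tau^{-1}(\,\cdot\,)T_\tau$ and by the prefactor $\tau^2T_\tau^{-1}$ is matched, entry by entry, either by an identically vanishing numerator (the $\phi$-row and $\phi$-column of a rotation) or by a trigonometric function with a zero of at least that order at $0$ — which is precisely what the weights $(4,3,1)$ are arranged to guarantee.
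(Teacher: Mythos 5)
Your argument is correct and essentially the paper's own: you compute $\mathcal M$ and $\mathcal X$ explicitly via the rotation structure $M_{p_0}^{-1}M_p=M_{p-p_0}$, substitute $\phi-\phi_0=\tau\check\phi$, and absorb the negative powers of $\tau$ into the entire functions $\sin(z)/z$ and $(\cos(z)-1)/z^2$, exactly as in the paper. The only difference is that you assert, rather than prove, that all derivatives of these sinc-type functions are bounded on $\mathbb R$ (the paper does a short induction giving polynomial bounds); this is a true and standard fact, e.g.\ from the representation $\sin(z)/z=\int_0^1\cos(tz)\,\mathrm dt$, so nothing essential is missing.
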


\begin{proof}
Since $M_{p_0}$ is the matrix of a rotation of angle $\phi_0$ in the $(x,y)$-plane, it is clear that $M_p=M_{p_0}M_{p-p_0}$ and $X(p) = M_p(1,0,0)$. Accordingly, the above matrix and vector field are
\begin{align*}
& T_\tau^{-1}M_{p-p_0}T_\tau
& \text{and} &
& \tau^2T_\tau^{-1}(M_{p-p_0}-\mathrm{Id})\begin{pmatrix} 1 \\ 0 \\ 0 \end{pmatrix}.
\end{align*}
Using the actual expressions for $T_\tau$ and $M_{p-p_0}$, and setting $\check\phi$ such that $\phi = \phi_0+\tau\check\phi$, we find explicitly
\begin{align*}
& \begin{pmatrix}
    \cos(\tau\check\phi) & - \frac1\tau\sin(\tau\check\phi) & 0 \\
    \tau\sin(\tau\check\phi) &   \cos(\tau\check\phi) & 0 \\
    0 & 0 & 1
  \end{pmatrix}
& \text{and} &
& \begin{pmatrix} \frac1{\tau^2}(\cos(\tau\check\phi)-1) \\ \frac1\tau\sin(\tau\check\phi) \\ 0 \end{pmatrix}.
\end{align*}
Clearly these depend only on $\tau$ and $\check\phi$. Setting $\operatorname{sinc}$ the classical $\phi\mapsto\sin(\phi)/\phi$ function, we get
\begin{align*}
   & \begin{pmatrix}
       \cos(\tau\check\phi) & - \operatorname{sinc}(\tau\check\phi)\check\phi & 0 \\
       \tau\sin(\tau\check\phi) &   \cos(\tau\check\phi) & 0 \\
       0 & 0 & 1
     \end{pmatrix}
   & \text{and} &
   & \begin{pmatrix} -\frac12\operatorname{sinc}(\tau\check\phi/2)^2\check\phi^2 \\ \operatorname{sinc}(\tau\check\phi)\check\phi \\ 0 \end{pmatrix}.
\end{align*}
Now it suffices to control the derivatives of $\operatorname{sinc}$. This follows from an easy induction argument, namely one can show that the derivatives $\phi^{n+1}\operatorname{sinc}^{(n)}(\phi)$ stay of the form
\[ P(\phi)\cos(\phi)+Q(\phi)\sin(\phi) \]
for well-chosen polynomials $P$ and $Q$ and of degree at most $n$. This suffices to get $\mathcal M$ and $\mathcal X$, along with their derivatives, uniformly bounded by polynomials in $\tau$ and $\check\phi$.
\end{proof}

\subsection{Convolution theorems}
We are now ready to state and prove the main result of this section.

\begin{theorem}
\label{th:convolutionab}
For any $A\in\Psi^a$ and $B\in\Psi^b$, if $a>0$ and $b>0$ the convolution
\[ A*B:(t,p_0,p)\mapsto\int_0^t\int_{\mathbb R^3}A_s(p_0,p_*)B_{t-s}(p_*,p)\mathrm d s\,\mathrm d p_* \]
is well-defined and belongs to $\Psi^{a+b}$. In symbols, $\Psi^a*\Psi^b\subset\Psi^{a+b}$.
\end{theorem}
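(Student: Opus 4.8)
The plan is to reduce everything to the two equivalent descriptions of $\Psi^a$ from Definition~\ref{def:Psia} and to carry out the change of variables explicitly in microscopic coordinates. Write $A$ in its ``source'' form \eqref{eq:defpsiasource} and $B$ in its ``target'' form \eqref{eq:defpsiatarget}:
\[ A_s(p_0,p_*) = s^{-5+a}\check A\big(\sqrt s,p_0,P_{\sqrt s,p_0}^{-1}(p_*)\big),
\qquad
B_{t-s}(p_*,p) = (t-s)^{-5+b}\check B\big(\sqrt{t-s},p,P_{-\sqrt{t-s},p}^{-1}(p_*)\big), \]
with $\check A,\check B\in\check\Psi$. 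The key move is to substitute $s=\sigma^2 t$ (so $\sigma\in(0,1)$) and then to replace the inner spatial integration variable $p_*$ by the microscopic variable $\check p_*:=P_{\sqrt s,p_0}^{-1}(p_*)$; the Jacobian of $p_*\mapsto\check p_*$ is $s^{4}=(\sigma^2t)^4$ by construction of $P$ (Jacobian $\tau^8=(\tau^2)^4$ at scale $\tau=\sqrt s$). One checks that the powers of $t$ combine correctly: $s^{-5+a}(t-s)^{-5+b}\cdot s^4 = t^{-6+a+b}\cdot \sigma^{-2+2a}(1-\sigma^2)^{-5+b}$, and after pulling out the expected prefactor $t^{-5+(a+b)}$ (matching $(\tau^2)^{5-(a+b)}$ with $\tau=\sqrt t$) we are left with an integral over $\sigma\in(0,1)$ and $\check p_*\in\mathbb R^3$ of $\check A$ and $\check B$ composed with various transition maps, against the weight $\sigma^{2a-2}(1-\sigma^2)^{b-5}$ times a smooth Jacobian factor.

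The heart of the matter is then to show this integral, as a function of $(\tau,p_0,\check p)$ with $\check p = P_{\sqrt t,p_0}^{-1}(p)$, lies in $\check\Psi$, i.e. is smooth up to $\tau=0$ with all $\tau$-, $p_0$-, $\check p$-derivatives bounded and rapidly decaying in $\check p$, locally uniformly in $\tau$. The transition maps relating the microscopic coordinate $\check p_*$ based at $(p_0,\sqrt s)$ to the one based at $(p,\sqrt{t-s})$ — and expressing $p$, $p_*$ back in terms of $p_0$ and the haček variables — are exactly of the type handled by Lemma~\ref{lem:crucialquantities}: after the rescaling they depend on $\tau$, $\sigma$, and the normalized angle differences only, with all derivatives polynomially bounded in those arguments and in $\check\phi$, $\check\phi_*$. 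Hence $\check A$ and $\check B$ get composed with functions that are smooth in $(\tau,\sigma,p_0,\check p,\check p_*)$ and polynomially controlled, so the integrand is smooth, and the Schwartz decay in $\check p$ of $\check A$, $\check B$ survives the composition (a polynomially-bounded affine-type reparametrization of a Schwartz function is Schwartz, with uniform constants) and, after integrating out $\check p_*$ against a Schwartz factor, produces Schwartz decay in the remaining $\check p$.

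The main obstacle — and the place where real work is needed — is integrability at the two endpoints $\sigma\to0$ and $\sigma\to1$, where the weight $\sigma^{2a-2}(1-\sigma^2)^{b-5}$ is singular (recall $b$ may be as small as making $b-5$ quite negative). Near $\sigma=1$ the factor $(t-s)^{-5+b}=t^{-5+b}(1-\sigma^2)^{-5+b}$ is not integrable on its own; the rescue is that $\check B(\sqrt{t-s},\dots)$ is evaluated at time $\sqrt{t-s}\to0$, where the microscopic picture is exact, and more importantly the spatial convolution against $\check B$ at vanishing time behaves like convolution against an approximate identity, so the $\check p_*$-integral contributes extra decay in $(1-\sigma)$; symmetrically near $\sigma=0$ one uses the smoothness and decay of $\check A$ at small time together with $a>0$ to beat the $\sigma^{2a-2}$ singularity. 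Making this precise will require expanding $\check B$ (resp. $\check A$) to sufficient order in its first argument near $0$, using that the zeroth-order term integrates to the correct mass and that higher-order terms carry the needed powers of $\sqrt{t-s}$ (resp. $\sqrt s$); this is the analytic core of Theorem~\ref{th:convolutionab}, and I expect it to occupy the bulk of the section, with the smoothness and decay in the ``bulk'' $\sigma\in[\delta,1-\delta]$ being comparatively routine given Lemma~\ref{lem:crucialquantities}.
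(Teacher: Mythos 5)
Your setup (source form for $A$, target form for $B$, rescaling in time, Lemma~\ref{lem:crucialquantities} for the transition maps, transferring Schwartz decay) matches the paper's strategy, but there is a genuine gap exactly where you locate ``the analytic core'': the endpoint $\sigma\to1$. Having chosen a single change of spatial variable $\check p_*=P_{\sqrt s,p_0}^{-1}(p_*)$ adapted to $A$ over the whole interval, you are left with the weight $(1-\sigma^2)^{b-5}$, and the rescue you propose --- treating $\check B$ at small time as an approximate identity and Taylor-expanding it in its time variable so that ``the zeroth-order term integrates to the correct mass'' --- is not the right mechanism. Elements of $\Psi^b$ carry no distinguished mass (the error $E\in\Psi^1$ is the relevant example), and no cancellation is needed. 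Moreover, in your single chart the argument fed to $\check B$ near $\sigma=1$ is $P_{-\sqrt{t-s},p}^{-1}\big(P_{\sqrt s,p_0}(\check p_*)\big)$, which involves $T_{\sqrt{t-s}}^{-1}M_{p}^{-1}M_{p_0}T_{\sqrt s}$; the dilation mismatch blows up like powers of $\sqrt{s/(t-s)}$ as $\sigma\to1$, so the polynomial control of Lemma~\ref{lem:crucialquantities} degenerates there and neither the derivative bounds nor the decay transfer stay uniform. So as written the argument does not close at that endpoint.

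The paper's resolution is structural and more elementary: split the time integral at $t/2$. On $[0,t/2]$ keep your change of variables (adapted to $A$, scale $\sqrt s$ at $p_0$); the only singular weight is $\sigma^{2a-1}\,\mathrm d\sigma$, integrable since $a>0$, while $\theta=\sqrt{1-\sigma^2}\geq1/\sqrt2$ keeps the $\theta$-dependent factors and the matrices $T_{\sigma/\theta}$, $T_{1/\theta}$ harmless. On $[t/2,t]$ use instead the \emph{target} rescaling $p_*=P_{-\sigma\tau,p}(\check p_*)$ with $t-s=(\sigma\tau)^2$: the Jacobian $(t-s)^4$ absorbs the offending $(t-s)^{-5+b}$ and leaves $\sigma^{2b-1}$, integrable since $b>0$, again with $\theta$ bounded below. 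This is precisely why Definition~\ref{def:Psia} insists on the two equivalent representations. Finally, to get the rapid decay in $\check p$ (not just boundedness) you need the quantitative lower bound $|\check p|\lesssim(1+\tau+|\check p_*|+|\check p_B|)^2$ coming from $p-p_0=(p-p_*)+(p_*-p_0)$, which lets you trade large $\check p$ for largeness of $\check p_*$ or $\check p_B$ and invoke the Schwartz decay of $\check A$ or $\check B$; your remark that a ``polynomially-bounded affine-type reparametrization of a Schwartz function is Schwartz'' gestures at this but is not a substitute for that inequality.
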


The following lemma will appear as an intermediate result along the proof of Theorem \ref{th:convolutionab}, and sums up the essence of the reasoning, in some uniform sense rather than the Schwartz one.

\begin{lemma}
\label{lem:new-uniformab}
For $A\in\Psi^a,B\in\Psi^b$ and $0<\varepsilon\leq1-\delta<1$, define the partial convolution
\[ \big(A\pconv_\varepsilon^{1-\delta} B\big):(t,p_0,p)
   \mapsto
   t\int_\varepsilon^{1-\delta}\int_{\mathbb R^3}A_{rt}(p_0,p_*)B_{(1-r)t}(p_*,p)\mathrm dp_*\mathrm dr. \]

The integrand is locally uniformly integrable; in particular the partial convolution is continuous. 

Moreover, if $a>0$, then the convergence
\[ A\pconv_\varepsilon^{1/2}B \to A\pconv_0^{1/2}B\]
holds locally uniformly in $(t,p_0,p)$ (and the right hand side is well-defined). The similar result holds for $b>0$.
\end{lemma}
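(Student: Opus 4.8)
The plan is to reduce everything to power counting by exploiting the two representations of Definition~\ref{def:Psia}. I would write $A_{rt}(p_0,p_*)$ in the source form~\eqref{eq:defpsiasource}, with representative $\check A^0\in\check\Psi$, so that $p_*$ enters only through $P_{\sqrt{rt},p_0}^{-1}(p_*)$; and $B_{(1-r)t}(p_*,p)$ in the target form~\eqref{eq:defpsiatarget}, with representative $\check B^1\in\check\Psi$, so that $p_*$ enters only through $P_{-\sqrt{(1-r)t},p}^{-1}(p_*)$. Substituting $s=rt$ and then changing variables in the inner integral to $\check p_*=P_{\sqrt{rt},p_0}^{-1}(p_*)$, whose Jacobian equals $\det\!\big(M_{p_0}T_{\sqrt{rt}}\big)=(rt)^4$, one arrives at
\[ \big(A\pconv_\varepsilon^{1-\delta}B\big)(t,p_0,p)
 = t^{a+b-5}\int_\varepsilon^{1-\delta} r^{a-1}(1-r)^{b-5}\,\mathcal I_{r,t}(p_0,p)\,\mathrm dr,
 \qquad
 \mathcal I_{r,t}(p_0,p):=\int_{\mathbb R^3}\check A^0\big(\sqrt{rt},p_0,\check p_*\big)\,\check B^1\big(\sqrt{(1-r)t},p,\Lambda_{r,t,p_0,p}(\check p_*)\big)\,\mathrm d\check p_*, \]
where $\Lambda_{r,t,p_0,p}=P_{-\sqrt{(1-r)t},p}^{-1}\circ P_{\sqrt{rt},p_0}$ is affine in $\check p_*$. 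For this lemma I need nothing about $\Lambda$ beyond affineness; what does the work is that $\check A^0$ is Schwartz in its last variable uniformly in the other two — in particular as $\sqrt{rt}\to0$, since elements of $\check\Psi$ extend smoothly and boundedly up to $\tau=0$ — and that $\check B^1$ is bounded.

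For the first assertion, fix a compact $K\subset\mathbb R_+^*\times\mathbb R^3\times\mathbb R^3$, contained say in $[t_-,t_+]\times\mathbb R^3\times\mathbb R^3$ with $t_->0$. For $(t,p_0,p)\in K$ and $r\in[\varepsilon,1-\delta]$ the scalar prefactor $t^{a+b-5}r^{a-1}(1-r)^{b-5}$ is bounded, and, picking $N>3$, the uniform bound $|\check A^0(\tau',p_0',\check p_*)|\leq C_N(1+|\check p_*|)^{-N}$ (valid for $\tau'\in[0,\sqrt{t_+}]$ and all $p_0'$) together with $\|\check B^1\|_\infty<\infty$ bounds the integrand of the double integral, uniformly over $K$, by a fixed integrable function of $(r,\check p_*)$. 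This is the claimed local uniform integrability; since in addition $(t,p_0,p)\mapsto A_{rt}(p_0,p_*)B_{(1-r)t}(p_*,p)$ is continuous for each fixed $(r,p_*)$, the kernels $A,B$ being smooth on $\mathbb R_+^*\times\mathbb R^3\times\mathbb R^3$, dominated convergence gives continuity of $A\pconv_\varepsilon^{1-\delta}B$.

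For the second assertion take $\delta=\tfrac12$ and suppose $a>0$. The same pointwise estimate gives $|\mathcal I_{r,t}(p_0,p)|\leq C$ uniformly over $K$, so the $r$-integrand is dominated by $C\,r^{a-1}(1-r)^{b-5}$, which is Lebesgue-integrable on $(0,\tfrac12]$ precisely because $a>0$; this makes $A\pconv_0^{1/2}B$ well-defined (the change of variables and Fubini above remaining legitimate down to $r=0$ by that same absolute integrability), and
\[ \sup_{K}\big|A\pconv_\varepsilon^{1/2}B-A\pconv_0^{1/2}B\big|
 \leq C\int_0^\varepsilon r^{a-1}(1-r)^{b-5}\,\mathrm dr \longrightarrow 0
 \quad\text{as }\varepsilon\to0. \]
The statement for $b>0$ is symmetric: use the target form for $A$ and the source form for $B$, and change variables via $P_{-\sqrt{(1-r)t},p}^{-1}$ (Jacobian $((1-r)t)^4$), producing the prefactor $(1-r)^{b-1}$, integrable near $r=1$.

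The computation is essentially bookkeeping, and the single load-bearing observation is the one that forces the hypothesis: the Jacobian $(rt)^4$ cancels all but one power of the singular prefactor $(rt)^{-5+a}$ coming from $A$, leaving $r^{a-1}$, integrable near $r=0$ exactly when $a>0$. The only point requiring a little care is to justify the change of variables and Fubini uniformly all the way down to the endpoint, which the absolute integrability above supplies; beyond that, everything reduces to dominated convergence against the uniform Schwartz bounds built into $\check\Psi$.
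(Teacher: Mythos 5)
Your argument is correct and is essentially the paper's own: the paper obtains this lemma as a byproduct of the same change of variables $p_*=P_{\sqrt{rt},p_0}(\check p_*)$ inside the proof of Theorem~\ref{th:convolutionab}, with $A$ in the source form and $B$ in the target form, the Jacobian $\tau^8=(rt)^4$ producing exactly your prefactor $t^{a+b-5}r^{a-1}(1-r)^{b-5}$, and the sup/Schwartz bounds built into $\check\Psi$ supplying the domination (the paper's extra work with $|\check p|^k|\check p_*|^\ell$ weights and the comparison $|\check p|\lesssim(1+\tau+|\check p_*|+|\check p_B|)^2$ is only needed for the full convolution theorem, not for this lemma, as you rightly observe). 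One small correction to your last sentence: the labels are swapped there --- for the endpoint $r\to1$ you must take $B$ in its \emph{target} form, which is what matches the change of variables $\check p_*=P_{-\sqrt{(1-r)t},p}^{-1}(p_*)$ you specify and makes the Schwartz decay in $\check p_*$ come from $\check B^1$, while $A$ only needs to be bounded and may be written in either form.
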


In the local uniform convergence above, only the cases $t$ small or large are actually troublesome. The convergence holds uniformly in $(p_0,p)$, but we will not need this result.

\begin{proof}[Proof of Theorem \ref{th:convolutionab}]
In the following, the integrals under consideration are possibly undefined for now, but we show along the proof that they are equal to well-defined quantities. This means that a formal proof would go from the bottom up.

Since $A$ and $B$ are in $\Psi^a$ and $\Psi^b$, by definition there must exists $\check A,\check B\in\check\Psi$ such that $A$ and $B$ satisfy
\begin{align*}
  \tau^{2(5-a)}A_{\tau^2}(p_0,P_{\tau,p_0}(\check p)) & = \check A(\tau,p_0,\check p), \\
  \tau^{2(5-b)}B_{\tau^2}(P_{-\tau,p}(\check p_0),p) & = \check B(\tau,p,\check p_0).
\end{align*}

Consider first the time integral from $0$ to $t/2$, which we call $I^A$. We will show that a proper renormalisation $\check I^A$ belongs to $\check\Psi$; specifically,
\begin{align*}
\check I^A:(\tau,p_0,\check p)
  & \mapsto \tau^{2(5-a-b)}I^A(\tau^2,p_0,P_{\tau,p_0}(\check p)) \\
  & \qquad = \tau^{2(5-a-b)}\int_0^{\tau^2/2}\int_{\mathbb R^3}
                            A_s(p_0,p_*)B_{\tau^2-s}(p_*,P_{\tau,p_0}(\check p))\mathrm d s\,\mathrm d p_*.
\end{align*}
We change variables in the defining integral for $\check I^A$, namely we set $s=(\sigma\tau)^2$ and $p_* = P_{\sigma\tau,p_0}(\check p_*)$. This gives
\[ \check I^A(\tau,p_0,\check p)
   = 2\int_0^{1/\sqrt2}\int_{\mathbb R^3}
      \sigma^{-1+2a}\theta^{-2(5-b)}\check A(\sigma\tau,p_0,\check p_*)\check B(\theta\tau,p,\check p_B)\mathrm d\sigma\,\mathrm d \check p_*, \]
\begin{align*}
    \theta & = \theta_\sigma = \sqrt{1-\sigma^2}, \\
         p & = p(\tau,p_0,\check p) = P_{\tau,p_0}(\check p), \\
\check p_B & = (\check p_B)_{\sigma,\check p_*}(\tau,p_0,\check p) = P_{-\theta\tau,P_{\tau,p_0}(\check p)}^{-1}\big(P_{\sigma\tau,p_0}(\check p_*)\big).
\end{align*}
Note that the hypothesis $a>0$ becomes clear, for the convergence of the integral.

Similarly, we can consider the integral $I^B$ from $t/2$ to $t$, and its endpoint normalisation
\begin{align*}
\check I^B:(\tau,p,\check p_0)
  & \mapsto \tau^{2(5-a-b)}\int_{\tau^2/2}^{\tau^2}\int_{\mathbb R^3}
                             A_s(P_{-\tau,p}(\check p_0),p_*)B_{\tau^2-s}(p_*,p)\mathrm d s\,\mathrm d p_*.
\end{align*}
The change of variables $t = \tau^2(1-\sigma^2)$, $p_*=P_{-\sigma\tau,p}(\check p_*)$ gives
\[ \check I^B(\tau,p,\check p_0)
   = 2\int_0^{1/\sqrt2}\int_{\mathbb R^3}
      \sigma^{-1+2b}\theta^{-2(5-a)}\check A(\theta\tau,p_0,\check p_A)\check B(\sigma\tau,p,\check p_*)\mathrm d\sigma\,\mathrm d \check p_*, \]
\begin{align*}
    \theta & = \theta_\sigma = \sqrt{1-\sigma^2}, \\
       p_0 & = p_0(\tau,p,\check p_0) = P_{-\tau,p}(\check p_0), \\
\check p_A & = (\check p_A)_{\sigma,\check p_*}(\tau,p,\check p_0) = P_{\theta\tau,P_{-\tau,p}(\check p_0)}^{-1}\big(P_{-\sigma\tau,p}(\check p_*)\big).
\end{align*}

It will be enough to show that both $\check I^A$ and $\check I^B$ belong to $\check\Psi$. We will be able to take the derivative under the integral sign and conclude, provided we can show the estimates
\begin{gather*}
\stepcounter{equation}
\tag{\theequation~a}
\label{eq:uniformbound0}
  \sup_{\substack{\sigma<1-\varepsilon\\\tau\leq T}}\sup_{p_0,\check p,\check p_*}
  |\check p|^k|\check p_*|^\ell
  \cdot\left|\partial_\tau^nD_{p_0}^\alpha D_{\check p}^\beta\big(\theta^{-2(5-b)}\check A(\sigma\tau,p_0,\check p_*)\check B(\theta\tau,p,\check p_B)\big)\right| < \infty \\
\tag{\theequation~b}
\label{eq:uniformbound1}
  \sup_{\substack{\sigma<1-\varepsilon\\\tau\leq T}}\sup_{p,\check p_0,\check p_*}
  |\check p_0|^k|\check p_*|^\ell
  \cdot\left|\partial_\tau^nD_p^\alpha D_{\check p_0}^\beta\big(\theta^{-2(5-b)}\check A(\theta\tau,p_0,\check p_A)\check B(\sigma\tau,p,\check p_*)\big)\right| < \infty
\end{gather*}
for all $\varepsilon>0$, $k,\ell\in\mathbb N$, and $\alpha,\beta\in\mathbb N^3$. Note that they would imply the estimates of Lemma \ref{lem:new-uniformab}, by operating on the considered integrals the same change of variables.

Since $\check A$ and $\check B$ are in $\check\Psi$, and can in a certain sense absorb powers of $\check p_*$, $\check p_A$ and $\check p_B$, the proof of the estimates above reduces to the following two facts.
\begin{enumerate}
\item The quantities $\theta$, $p$ and $\check p_B$ are smooth with respect to $(\tau,p_0,\check p)$, with all derivatives bounded by polynomials in $(\tau,\check p,\check p_*)$, uniformly in $\sigma$ and $p_0$. Similarly, the quantities $\theta$, $p_0$ and $\check p_A$ are smooth with respect to $(\tau,p,\check p_0)$, with all derivatives bounded by polynomials in $(\tau,\check p,\check p_*)$, uniformly in $\sigma$ and $p$.
\item We have $|\check p|\lesssim (1+\tau+|\check p_*| + |\check p_B|)^2$, up to a universal constant. Similarly, $|\check p_0|\lesssim (1+\tau+|\check p_*| + |\check p_A|)^2$
\end{enumerate}

Let us show first that $\theta$, $p$ and $\check p_B$ above, along with all their derivatives, are bounded by polynomials in $(\tau,\check p,\check p_*)$. The result is obvious for $\theta$. Regarding $p$ and $\check p_B$, direct computations yield
\begin{align*}
         p & = p_0 + M_{p_0}T_\tau\check p + \tau^2X(p_0), \\
\check p_B & = - \mathcal X(\theta\tau,p,p_0) + \mathcal M(\theta\tau,p,p_0)T_{\sigma/\theta}\check p_* - T_{1/\theta}\mathcal M(\tau,p,p_0)\check p \\
           & = - \check{\mathcal X}(\theta\tau,-\check\phi/\theta) + \check{\mathcal M}(\theta\tau,-\check\phi/\theta)T_{\sigma/\theta}\check p_* - T_{1/\theta}\check{\mathcal M}(\tau,-\check\phi)\check p,
\end{align*}
with $\mathcal M$ and $\mathcal X$ defined as in Lemma \ref{lem:crucialquantities}. According to the lemma, and since $\theta$ is bounded below, these quantities, along with all their derivatives, are bounded by polynomials in $(\tau,\check p,\check p_*)$, uniformly in $\sigma$ and $p_0$.

The quantities pertaining to $\check I^B$ are controlled in the same fashion, using the expressions
\begin{align*}
       p_0 & = p + M_pT_\tau\check p_0 - \tau^2X(p), \\
\check p_A & = \mathcal X(\theta\tau,p_0,p) + \mathcal M(\theta\tau,p_0,p)T_{\sigma/\theta}\check p_* - T_{1/\theta}\mathcal M(\tau,p_0,p)\check p_0 \\
           & = \check{\mathcal X}(\theta\tau,-\check\phi_0/\theta) + \check{\mathcal M}(\theta\tau,-\check\phi_0/\theta)T_{\sigma/\theta}\check p_* - T_{1/\theta}\mathcal M(\tau,-\check\phi_0)\check p_0.
\end{align*}

As for the second fact, we want to bound $|\check p|$ by a polynomial in $|\check p_*|$ and $|\check p_B|$. In essence, this will come from the fact that $p-p_0$ is but $(p-p_*)+(p_*-p_0)$. Specifically, by definition of $p$ and $\check p_B$,
\[ P_{\tau,p_0}(\check p)-p_0 = (p - P_{-\theta\tau,p}(\check p_B)) + (P_{\sigma\tau,p_0}(\check p_*)-p_0). \]
Unfolding the definitions,
\[ \tau^2X(p_0) + M_{p_0}T_\tau\check p
 = \big((\theta\tau)^2X(p) - M_pT_{\theta\tau}\check p_B\big)
 + \big((\sigma\tau)^2X(p_0) - M_{p_0}T_{\sigma\tau}\check p_*\big), \]
so that we have
\begin{align*}
\check p & = \theta^2\mathcal X(\tau,p_0,p) + T_\sigma\check p_* - \mathcal M(\tau,p_0,p)T_\theta\check p_B \\
         & = \theta^2\check{\mathcal X}(\tau,\check\phi) + T_\sigma\check p_* - \check{\mathcal M}(\tau,\check\phi)T_\theta\check p_B.
 \end{align*}
Actually, at this point we need a bit more about $\mathcal M$ and $\mathcal X$ than can be found in Lemma \ref{lem:crucialquantities}. That said, using the exact expressions in equation \eqref{eq:checkMX}, we get that
\begin{align*}
|\check\phi| & = |\sigma\check\phi_*- \theta\check\phi_B| \leq |\check p_*| + |\check p_B|, \\
  |\check y| & \leq \theta^2|\check\phi| + \sigma^3|\check y_*| + |\theta^4\tau\check x_B| + |\theta^3\check y_B|
               \leq 2|\check p_*| + (2+\tau)|\check p_B|, \\
  |\check x| & \leq \theta^2|\check\phi^2| + \sigma^4|\check x_*| + |\theta^4\check x_B| + |\check\phi\theta^3\check y_B|
               \leq 2(1+|\check p_*|+|\check p_B|)^2.
\end{align*}
This is the required inequality for $|\check p|$ as it appears in $\check I^A$. The equivalent inequality for $\check I^B$ is deduced in the same fashion from
\[ p-P_{-\tau,p}(\check p_0) = (p - P_{-\sigma\tau,p}(\check p_*)) + (P_{\theta\tau,p_0}(\check p_A) - p_0), \]
leading to
\begin{align*}
\check p_0 & = -\theta^2\check{\mathcal X}(\tau,\check\phi_0) + T_\sigma\check p_* - \check{\mathcal M}(\tau,\check\phi_0)T_\theta\check p_A.
\end{align*}
This concludes the proof of the second fact, hence that of Theorem \ref{th:convolutionab}.
\end{proof}

Provided, as we will see in Section \ref{sec:checku}, that $\widetilde u$ and $E$ are elements of $\Psi^1$, this convolution theorem shows that the $k$th term $(E*)^k\tilde u$ in the expansion \eqref{eq:Duhamelseries} belongs to the space $\Psi^{k+1}$. To show that the series converges, we need some quantitative uniform control on top of this qualitative result. These estimates will be easier to establish when the functions under consideration extend to the boundary $t=0$, which happens for the elements of $\Psi^5$. Accordingly, we would like to reduce the study of iterated convolutions on the left to that of convolution of smooth functions arising as instances of $E^{*5}\in\Psi^5$; of course, we would like the convolution operation to be associative whenever well-defined. Thus, the remainder of this section is devoted first to the associativity of $*$, as described in Proposition \ref{prop:nconvolution} using the method of proof of the convolution Theorem \ref{th:convolutionab}; and secondly to the Proposition \ref{prop:regularconvolution}, giving a quantitative control for the convolution of regular functions.

\begin{proposition} \label{prop:nconvolution}
Let $A_0,\cdots,A_\ell$ be elements of $\Psi^{a_0},\cdots,\Psi^{a_\ell}$ respectively, with each $a_i$ positive. Then, for any choice of parentheses for the expression $A_0*\cdots*A_\ell$, the convolution is well-defined, belongs to $\Psi^{a_0+\cdots+a_\ell}$, and its value at $(t,p_0,p)$ is given by the absolutely convergent integral
\[ \int_{0\leq t_1\leq\cdots\leq t_\ell\leq t}\int_{\mathbb R^{3\ell}}
     A_0(t_1,p_0,p_*^1)\cdots A_i(t_{i+1}-t_i,p_*^i,p_*^{i+1}) \cdots A_\ell(t-t_\ell,p_*^\ell,p)
     \mathrm dt_1\cdots\mathrm dt_\ell\,\mathrm dp_*. \]
In particular, we can write $A_0*\cdots*A_\ell$ unambiguously, and $*$ is associative for functions in $\bigcup_{a>0}\Psi^a$.
\end{proposition}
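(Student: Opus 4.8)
The plan is to induct on $\ell$, using Theorem \ref{th:convolutionab} as the base case ($\ell=1$) and as the tool that justifies each new pairing. The real content is twofold: first, that any fully-parenthesised product of the $A_i$ is well-defined and lies in the claimed space; second, that its value is given by the single iterated integral displayed in the statement, independently of the parenthesisation. For the first point, fix a parenthesisation. It corresponds to a binary tree whose leaves are $A_0,\dots,A_\ell$; proceeding from the innermost pairs outward, each binary node combines two factors already known (by the induction hypothesis applied to the two subtrees) to be elements of $\Psi^{b}$ and $\Psi^{b'}$ with $b,b'>0$, so Theorem \ref{th:convolutionab} applies and produces an element of $\Psi^{b+b'}$. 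Since all partial sums of positive numbers are positive, every intermediate convolution is a convolution of two $\Psi$-functions with positive indices, so no hypothesis of the theorem is ever violated, and the root of the tree is an element of $\Psi^{a_0+\cdots+a_\ell}$.

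For the second point — the explicit integral formula — I would argue that, once all factors are known to be $\Psi$-functions with positive indices, the iterated spatial-and-temporal integral
\[ \int_{0\leq t_1\leq\cdots\leq t_\ell\leq t}\int_{\mathbb R^{3\ell}}
     A_0(t_1,p_0,p_*^1)\cdots A_\ell(t-t_\ell,p_*^\ell,p)\,
     \mathrm dt_1\cdots\mathrm dt_\ell\,\mathrm dp_* \]
is absolutely convergent. This is exactly the kind of estimate carried out in the proof of Theorem \ref{th:convolutionab}: each $A_i\in\Psi^{a_i}$ is, after the rescaling $P_{\sqrt{t},p_0}$, Schwartz in the rescaled spatial variable with an $s^{-5+a_i}$ prefactor in time; the Schwartz decay makes every spatial integral converge and lets one absorb the geometric distortions (the quantities $\mathcal M$, $\mathcal X$ of Lemma \ref{lem:crucialquantities}), while the time integral $\int_{0}^{t}s^{-1+a_i}\,\mathrm ds$-type bounds are finite precisely because each $a_i>0$. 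Granting absolute convergence, Fubini's theorem lets one integrate the time and space variables in any order, and a straightforward unfolding of the definition of $*$ (for any parenthesisation, by induction on the tree) rewrites the nested convolution as this single integral: each binary node $(\,\cdot\,)*(\,\cdot\,)$ introduces one more time variable $t_i$ split as $s$ and (remaining)$-s$ and one more spatial variable $p_*^i$, and after reindexing one recovers the simplex $0\le t_1\le\cdots\le t_\ell\le t$ and the product over consecutive increments. Since the resulting expression makes no reference to the parenthesisation, all parenthesisations agree, which is the asserted associativity; and the formula also shows the value is the absolutely convergent integral claimed.

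The main obstacle is the absolute-convergence/Fubini step: one must verify that the \emph{unsigned} iterated integral is finite, uniformly enough to apply Fubini and to differentiate, rather than merely that the convolutions make sense one pairing at a time. The cleanest route is to reuse the machinery of the proof of Theorem \ref{th:convolutionab} inductively — namely, bound $|A_i(s,q,q')|$ above by $s^{-5+a_i}$ times a Schwartz majorant in $P_{\sqrt s,q}^{-1}(q')$ and propagate these bounds through the simplex using the two "facts" isolated in that proof (the polynomial control of $\mathcal M,\mathcal X$ and the inequality $|\check p|\lesssim(1+\tau+|\check p_*|+|\check p_B|)^2$). Because those facts were stated for a single convolution, I would phrase the induction so that at each step I am combining two already-bounded factors, so that at no point do I need a genuinely $\ell$-fold version of the geometric estimates; each step is a two-factor estimate of the type already proved. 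The positivity $a_i>0$ is what keeps each new time integral convergent at its lower endpoint, exactly as in Theorem \ref{th:convolutionab}, so the induction closes.
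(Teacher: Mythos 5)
Your proposal is correct and follows essentially the same route as the paper: iterate the two-factor Theorem~\ref{th:convolutionab} to get well-definedness and membership in $\Psi^{a_0+\cdots+a_\ell}$ for any parenthesisation, then establish absolute convergence by running the derivative-free two-factor estimates \eqref{eq:uniformbound0}--\eqref{eq:uniformbound1} on the absolute values pairwise, and conclude by Fubini. The paper merely packages your ``propagate Schwartz majorants two factors at a time'' step by introducing the derivative-free spaces $\check\Psi_0$ and $\Psi^a_0$ and noting $|A|*|B|\in\Psi^{a+b}_0$, which is the same idea.
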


\begin{proof}
Theorem \ref{th:convolutionab} states exactly that the convolution is a well-defined element of $\Psi^{a_1+\cdots+a_\ell}$ for any given choice of parentheses. It is clear that if the integrand is in $\mathrm L^1$, then we can swap integrals around and show that it equals an expression with parentheses of our liking. On the other hand, such an $\mathrm L^1$ estimate would follow if we were to show that some choice of parentheses makes $|A_0|*\cdots*|A_\ell|$ well-defined.

This is an easy consequence of the proof of Theorem \ref{th:convolutionab}. Namely, write $\check\Psi_0$ for the version of $\check\Psi$ where we forget about derivatives (i.e. $n=0$ and $\alpha=\beta=0$ in equation \eqref{eq:defcheckpsi}), and $\Psi^a_0$ for the corresponding scaled space (see Definition \ref{def:Psia}). If $|A|$ and $|B|$ are in $\Psi^a_0$ and $\Psi^b_0$ for $a,b>0$, then the estimates \eqref{eq:uniformbound0} and \eqref{eq:uniformbound1} show that $|A|*|B|$ is in $\Psi^{a+b}_0$.
\end{proof}

We introduce the operator $\mathcal T:\Psi^a\to\Psi^{a+1}$ characterised by
\begin{equation}
\label{eq:defT}
\mathcal Tv:(t,X_0,X_1)\mapsto tv(t,X_0,X_1).
\end{equation}
Of course, there is an inverse $\mathcal T^{-1}:\Psi^a\to\Psi^{a-1}$ with obvious definition. Note also that $\mathcal T(A*B) = (\mathcal TA)*B + A*(\mathcal TB)$ whenever each term is well-defined.

\begin{proposition}
\label{prop:regularconvolution}
Let $\mathcal A$ be a finite subset of $\Psi^5$ and $T>0$ some time horizon. There exists a constant $M>0$ such that for any tuple $(A_0,\ldots,A_\ell)$ of functions in $\mathcal A$ and any $m\leq5\ell$,
\begin{equation}
\label{eq:supnormnconvolution}
|\mathcal T^{-m}(A_0*\cdots*A_\ell)|_\infty \leq \frac{M^{\ell+1}\cdot T^{5\ell-m}}{\ell!}
\end{equation}
for $|A|_\infty$ the supremum of $A$ over $(0,T]\times\mathbb R^3\times\mathbb R^3$.
\end{proposition}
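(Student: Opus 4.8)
The plan is to reduce everything to the convolution of two smooth functions on $(0,T]\times\mathbb R^3\times\mathbb R^3$ and then induct on $\ell$. Since each $A_i\in\Psi^5$, by Definition \ref{def:Psia} the rescaled version $\check A_i$ lies in $\check\Psi_0$, hence each $A_i$ extends continuously to $t=0$; thus $|A_i|_\infty<\infty$, and we may set $M_0:=\max_{A\in\mathcal A}|A|_\infty$. The first step is to establish the base case $\ell=0$: there $m\leq0$ forces $m=0$ (since $m\in\mathbb N$ in $\mathcal T^{-m}$), and \eqref{eq:supnormnconvolution} reads $|A_0|_\infty\leq M\cdot T^0/0!=M$, which holds as soon as $M\geq M_0$.

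Next I would isolate the key \emph{one-step} estimate: for $A\in\Psi^5$ and $B\in\Psi^{5\ell+5}$ (with $\ell\geq0$), and for any $m\leq5\ell+5$, the bound
\[ |\mathcal T^{-m}(A*B)|_\infty \leq C\cdot T^{5\ell+5-m}\cdot|B|_{\mathrm{scaled}}, \]
where $C$ depends only on $\mathcal A$ and $T$, and $|B|_{\mathrm{scaled}}$ is the natural $\check\Psi_0$-type norm controlling $B$ after rescaling. The mechanism is exactly that of Theorem \ref{th:convolutionab}: write $(A*B)_t(p_0,p)=\int_0^t\int A_s B_{t-s}$, split the time integral at $t/2$, and in each half perform the change of variables $s=(\sigma\tau)^2$ (resp.\ $t-s=(\sigma\tau)^2$), $p_*=P_{\sigma\tau,p_0}(\check p_*)$. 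For the first half $A$ is being integrated over small times where $\check A$ is bounded (using $a=5>0$, so the factor $\sigma^{-1+2\cdot5}$ is integrable), while $B$ is evaluated at times $\geq t/2$, so its rescaled norm controls it; the Gaussian-type decay of $\check A$ in $\check p_*$ absorbs the polynomial growth of the Jacobians and of $\check p_B$ in $(\tau,\check p,\check p_*)$ coming from Lemma \ref{lem:crucialquantities}. The second half is symmetric, with the roles reversed and $b>0$ used instead. Crucially, splitting at $t/2$ means each factor is evaluated on a time interval of length at least $t/2$ and at most $t\leq T$, which is what produces the clean power $T^{5\ell+5-m}$ after one applies $\mathcal T^{-m}$ and uses $\mathcal T^{-m}(A*B)=\sum_{j}\binom{\cdot}{\cdot}(\mathcal T^{-j}A)*(\mathcal T^{-(m-j)}B)$-type identities together with the Leibniz rule $\mathcal T(A*B)=(\mathcal TA)*B+A*(\mathcal TB)$.

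Then the induction closes as follows. By Proposition \ref{prop:nconvolution} we may write $A_0*\cdots*A_\ell=A_0*(A_1*\cdots*A_\ell)$ with $B:=A_1*\cdots*A_\ell\in\Psi^{5\ell}$. We need $m\leq5\ell$; splitting $\mathcal T^{-m}=\mathcal T^{-m_0}\circ\mathcal T^{-m_1}$ appropriately and applying the one-step estimate with the inductive bound $|\mathcal T^{-m_1}B|_\infty\leq M^\ell T^{5(\ell-1)-m_1}/(\ell-1)!$, we obtain a bound of the shape $C\cdot M^\ell\cdot T^{5\ell-m}/(\ell-1)!$. To upgrade the $(\ell-1)!$ to $\ell!$ — which is what makes the series summable — one must integrate the extra time variable honestly: in the representation of Proposition \ref{prop:nconvolution}, the innermost time integral over the simplex $\{0\leq t_1\leq\cdots\leq t_\ell\leq t\}$ contributes a factor $t^\ell/\ell!\leq T^\ell/\ell!$ rather than merely $T^\ell$, and tracking this combinatorial gain at each convolution is precisely where the $1/\ell!$ comes from. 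So the cleanest route is actually to bound the full multiple integral of Proposition \ref{prop:nconvolution} directly: pull out $\prod_i|A_i|_{\mathrm{scaled}}$-type constants (all $\leq M_0$), bound the spatial integrals using the $\check\Psi_0$ decay via the same change of variables as above (this is where one reuses, not reproves, the estimates \eqref{eq:uniformbound0}--\eqref{eq:uniformbound1}), and integrate the simplex to get the $T^{5\ell-m}/\ell!$. Setting $M$ large enough to swallow the per-step constant $C$ and the base case finishes the proof.

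The main obstacle is bookkeeping the operator $\mathcal T^{-m}$ across $\ell$ convolutions while keeping the time-power exact: each $\mathcal T^{-1}$ divides by $t$, which individually is \emph{not} bounded on $(0,T]$, so one cannot distribute the $t^{-m}$ naively. The resolution is that $A_0*\cdots*A_\ell\in\Psi^{5\ell+5}$ vanishes to high enough order at $t=0$ (its rescaled version is in $\check\Psi_0$, so it behaves like $t^{5\ell}$ near $0$) that $\mathcal T^{-m}$ for $m\leq5\ell$ still yields a bounded function — one sees this concretely by doing the $\mathcal T^{-m}$ \emph{inside} the change of variables, where $t^{-m}=\tau^{-2m}$ is harmless because the rescaled integrand carries a compensating $\tau^{2\cdot5\ell}$. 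Handling this interplay carefully, rather than the (routine) Schwartz-decay and Jacobian estimates which are already packaged in Lemma \ref{lem:crucialquantities} and the proof of Theorem \ref{th:convolutionab}, is the crux.
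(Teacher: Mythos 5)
Your final route --- bounding the multiple-integral representation of Proposition \ref{prop:nconvolution} directly by the rescaled pointwise decay $M/(1+|\check p|^4)$ of each $A_i\in\Psi^5$, changing variables in the spatial integrals to extract the Jacobian powers $t_1^4\cdots(t_\ell-t_{\ell-1})^4$, and obtaining $1/\ell!$ from the simplex together with $t^{5\ell-m}\leq T^{5\ell-m}$ to absorb $\mathcal T^{-m}$ --- is exactly the paper's proof. The inductive detour you yourself discard is indeed dispensable (and the binomial-type identity you sketch for $\mathcal T^{-m}$ across a convolution does not hold; only $\mathcal T(A*B)=(\mathcal TA)*B+A*(\mathcal TB)$ does), and the direct bound needs only the $\check\Psi$ decay of each factor plus the incremental change of variables, not the estimates \eqref{eq:uniformbound0}--\eqref{eq:uniformbound1}.
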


\begin{proof}
For any given $A\in\Psi^5$, by definition there exists a constant $M$ such that
\[ \big|A_{\tau^2}\big(p_0,P_{\tau,p_0}(\check p)\big)\big| \leq \frac M{1+|\check p|^4} \]
when $\tau^2\leq T$, uniformly in $(p_0,\check p)$. Since $\mathcal A$ is finite, we can choose $M$ such that the inequality holds for every one of its elements.

Using the representation given in Proposition \ref{prop:nconvolution}, we get
\begin{multline*}
|(A_0*\cdots*A_\ell)(t,p_0,p)| \\
\begin{aligned}
& \leq \iint \frac M{1+\Big|P_{\sqrt{t_1},p_0}^{-1}(p_*^1)\Big|^4}
             \cdots\frac M{1+\Big|P_{\sqrt{t_{i+1}-t_i},p_*^i}^{-1}(p_*^{i+1})\Big|^4}\cdots
             \frac M{1+\Big|P_{\sqrt{t-t_\ell},p_*^\ell}^{-1}(p)\Big|^4}
             \mathrm dt\,\mathrm dp_* \\
& = M^{\ell+1}
    \iint \frac{t_1^4}{1+|\check p_*^1|^4}
          \cdots\frac{(t_{i+1}-t_i)^4}{1+|\check p_*^{i+1}|^4}\cdots
          \frac{(t_\ell-t_{\ell-1})^4}{1+|\check p_*^\ell|^4}\frac1{1+0}
          \mathrm dt\,\mathrm d\check p_*,
\end{aligned}
\end{multline*}
where the integrals range over $0\leq t_1\leq\cdots\leq t_\ell\leq t$, $p_*,\check p_*\in(\mathbb R^3)^\ell$. This gives
\begin{align*}
|(A_0*\cdots*A_\ell)(t,p_0,p)|
& \leq M^{\ell+1}
       \Big(\int_{t_1\leq\cdots\leq t_\ell\leq t}
            t_1^4\cdots(t_\ell-t_{\ell-1})^4
            \mathrm dt\Big)
       \Big(\int_{\mathbb R^3}\frac{\mathrm d\check p}{1+|\check p|^4}\Big)^\ell \\
& = M^{\ell+1}t^{5\ell}
    \Big(\int_{t_1\leq\cdots\leq t_\ell\leq 1}
         t_1^4\cdots(t_\ell-t_{\ell-1})^4
         \mathrm dt\Big)
    \big(\sqrt2\pi^2\big)^\ell.
\end{align*}
A crude estimate gives
\[   \int_{t_1\leq\cdots\leq t_\ell\leq 1}t_1^4\cdots(t_\ell-t_{\ell-1})^4\mathrm dt
\leq \int_{t_1\leq\cdots\leq t_\ell\leq 1}1\mathrm dt
   = \frac1{\ell!}\int_{t_1,\cdots,t_\ell\leq 1}1\mathrm dt = \frac1{\ell!}, \]
from which the proposition follows.
\end{proof}

\subsection{Derivatives}
Clearly, functions in $\Psi^a$ are smooth since their normalised versions are. Moreover, their derivatives actually belong to the same hierarchy of $\Psi$, albeit with a worse exponent. For instance, taking the derivative of equation \eqref{eq:defpsiasource} with respect to $x$, we see that
\begin{align*}
        \partial_xA_t(p_0,p)
     &= t^{-5+a-2}\cos(-\phi_0)\partial_{\check x}\check A\big(\sqrt t,p_0,P_{\sqrt t,p_0}^{-1}(p)\big) \\
&\qquad + t^{-5+a-3/2}\sin(-\phi_0)\partial_{\check y}\check A\big(\sqrt t,p_0,P_{\sqrt t,p_0}^{-1}(p)\big),
\end{align*}
so $\partial_xA\in\Psi^{a-2}$. In fact, we have the following result by induction.

\begin{proposition}
\label{prop:lossofregularity}
Let $A$ be in $\Psi^a$. For any $\alpha_0,\alpha\in\mathbb N^3$, $n\in\mathbb N$,
\[ \partial_t^nD_{p_0}^{\alpha_0}D_p^{\alpha}A\in\Psi^{a-2K}, \]
for $K = n + (\alpha_0^x+\alpha_0^y+\alpha_0^\phi) + (\alpha^x+\alpha^y+\alpha^\phi)$.
\end{proposition}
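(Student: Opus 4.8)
The plan is to reduce everything to first-order derivatives via an induction, and to treat the first-order case by differentiating the defining representation of $\Psi^a$ through the chain rule, taking care to re-express the outcome in the rescaled (haček) variables.

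Since $A$ is smooth on $\mathbb R_+^*\times\mathbb R^3\times\mathbb R^3$, any mixed partial $\partial_t^nD_{p_0}^{\alpha_0}D_p^\alpha$ of total order $K$ is a composition of $K$ operators taken from the list $\partial_t,\partial_{x_0},\partial_{y_0},\partial_{\phi_0},\partial_x,\partial_y,\partial_\phi$, in whatever order we like. It therefore suffices to prove the case $K=1$: once each of these seven operators is known to map $\Psi^b$ into $\Psi^{b-2}$, the general statement follows by iteration, the exponent dropping by exactly $2$ at each step, so that after $K$ steps we land in $\Psi^{a-2K}$.

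For the case $K=1$ I would start from
\[ A_t(p_0,p) = t^{-5+a}\,\check A\big(\tau,\,p_0,\,\check p\big),\qquad \tau=\sqrt t,\quad \check p = T_\tau^{-1}M_{p_0}^{-1}\big(p-p_0-tX(p_0)\big), \]
with $\check A\in\check\Psi$, and differentiate, tracking the $t$- and $p_0$-dependence hidden inside $\check p$. The key algebraic point is the identity $p-p_0-tX(p_0)=M_{p_0}T_\tau\check p$, which lets one eliminate the physically unbounded vector $p-p_0-tX(p_0)$ in favour of $\check p$. Using in addition that $M_{p_0}$ is a rotation in the $(x,y)$-plane — so $M_{p_0}^{-1}$, $\partial_{\phi_0}M_{p_0}^{-1}$ and all further $p_0$-derivatives have trigonometric entries, hence are smooth and bounded with bounded derivatives, and $M_{p_0}^{-1}X(p_0)=e_1$ — one checks that each first-order derivative of $A$ is a finite sum of terms of the shape
\[ t^{-5+a}\cdot\tau^{-k}\cdot c(\phi_0)\cdot q(\check p)\cdot\big(\partial_\tau^jD_{p_0}^\gamma D_{\check p}^\delta\check A\big)(\tau,p_0,\check p), \]
with $k\in\{0,1,2,3,4\}$, $c$ a product of trigonometric functions, and $q$ a polynomial of degree at most one. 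Concretely: for $\partial_x,\partial_y,\partial_{x_0},\partial_{y_0}$ the worst factor comes from $T_\tau^{-1}$ applied to a bounded vector, giving $\tau^{-4}=t^{-2}$, i.e.\ $k\le4$; for $\partial_\phi$ only $\tau^{-1}$ occurs, so $k\le1$; for $\partial_{\phi_0}$ there are two sources — the explicit $p_0$-slot of $\check A$ and the dependence through $\check p$ — and one finds $k\le1$; and for $\partial_t$ there are three sources — the prefactor $t^{-5+a}$, the first argument $\tau=\sqrt t$ of $\check A$, and $\check p$ — giving $k\le4$, using for instance $\partial_t[T_\tau^{-1}]\,T_\tau = -t^{-1}\operatorname{diag}(2,3/2,1/2)$ and $T_\tau^{-1}M_{p_0}^{-1}X(p_0)=t^{-2}e_1$.

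It then remains to see that each such term lies in $\Psi^{a-2}$. Factoring out $t^{-5+(a-2)}=t^{-5+a}\tau^{-4}$ leaves $\tau^{4-k}\,c(\phi_0)\,q(\check p)\,\big(\partial_\tau^jD_{p_0}^\gamma D_{\check p}^\delta\check A\big)$, which belongs to $\check\Psi$: the class $\check\Psi$ is stable under $\partial_\tau$, $D_{p_0}$ and $D_{\check p}$ directly from Definition \ref{def:checkPsi}, under multiplication by polynomials in $\check p$ (the Schwartz-type bounds absorb them), under multiplication by smooth functions of $p_0$ whose derivatives are all bounded (Leibniz), and under multiplication by the nonnegative integer power $\tau^{4-k}$ (bounded, together with its $\tau$-derivatives, on every compact $\tau$-interval). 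A finite sum of such terms is again in $\check\Psi$, so the derivative of $A$ lies in $\Psi^{a-2}$, which settles $K=1$ and hence the proposition. The only genuine work is the bookkeeping of the previous paragraph; the one pitfall to avoid is estimating any term before having traded every occurrence of $p-p_0-tX(p_0)$ for $M_{p_0}T_\tau\check p$, since otherwise the rapid decay in $\check p$ is not visible. There is no analytic difficulty beyond this.
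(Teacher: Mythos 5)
Your proof is correct and follows essentially the same route as the paper: differentiate the defining representation \eqref{eq:defpsiasource} via the chain rule in the rescaled variables, observe that each first-order operator costs at most a factor $t^{-2}$ (the paper illustrates this only for $\partial_x$), and conclude by induction. Your bookkeeping of all seven operators, including the finer losses for $\partial_\phi$ and $\partial_{\phi_0}$, matches the paper's remark that those directions only cost $1/2$.
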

Actually, derivatives in the direction of $\phi$ and $\phi_0$ would only worsen the exponent by $1/2$ rather than 2, but it will not help us in the following.

However, sometimes these derivatives are not easily computed. For instance, if we know that $A$ and $B$ are functions of $\Psi^a$ for some $a>0$, the relative technicality of proof of the existence and smoothness of $A*B$, see Theorem \ref{th:convolutionab}, suggests that its derivatives have no reason to admit a clean expression. That said, if $a>0$ is large enough, we can show the following. Recall that $\mathcal T$ is the multiplication by $t$ operator.

\begin{proposition}
\label{prop:derivativesconvolution}
Let $A\in\Psi^a$ and $B\in\Psi^b$ be some functions of regularity $a,b>0$, and $\alpha=(\alpha_x,\alpha_y,\alpha_\phi)\in\mathbb N^3$. We set $K=2(\alpha_x+\alpha_y+\alpha_\phi)$. Then, over $(0,\infty)\times\mathbb R^3\times\mathbb R^3$, the derivatives of $A*B$ are given by
\[ D_{p_0}^\alpha(A*B)
 = (D_{p_0}^\alpha A)*B\text{ whenever }a>K, \]
\[ D_p^\alpha(A*B)
 = A*(D_p^\alpha B)\text{ whenever }b>K, \]
\[ \partial_t(A*B)
 = \mathcal T^{-1}\big(A*B + (\mathcal T\partial_tA)*B + A*(\mathcal T\partial_tB)\big)\text{ whenever }a,b>1. \]
\end{proposition}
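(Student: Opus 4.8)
The plan is to reduce everything to the regular case, where all functions extend to $t=0$, and then justify differentiation under the integral sign using the uniform estimates already established in the proof of Theorem \ref{th:convolutionab} (specifically, the bounds \eqref{eq:uniformbound0} and \eqref{eq:uniformbound1}, together with Lemma \ref{lem:new-uniformab}). The key mechanism is that $D_{p_0}^\alpha$ acts only on the first factor: writing $A*B$ via the source representation \eqref{eq:defpsiasource} for $A$ and the target representation \eqref{eq:defpsiatarget} for $B$, and performing the change of variables from the proof of Theorem \ref{th:convolutionab}, one sees that $p_0$ enters the integrand only through $\check A$ and through the auxiliary quantities $\theta$, $p$, $\check p_B$ appearing in $\check I^A$ (and through the analogous quantities in $\check I^B$, via the relation $p = P_{\tau,p_0}(\check p)$). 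Since $\check A\in\check\Psi$ and all those auxiliary quantities are smooth in $p_0$ with derivatives bounded by polynomials in $(\tau,\check p,\check p_*)$ (Lemma \ref{lem:crucialquantities}), differentiating under the integral sign is legitimate as soon as $a>K$ guarantees that the extra factors $\sigma^{-1+2a}$ and the polynomial growth coming from $K = 2(\alpha_x+\alpha_y+\alpha_\phi)$ derivatives still leave an integrable power of $\sigma$ near $0$ and a Schwartz tail in $\check p_*$. Concretely, each space derivative costs at most a factor $\tau^{-2}$ in the scaled picture (this is exactly Proposition \ref{prop:lossofregularity}), so $D_{p_0}^\alpha A \in \Psi^{a-K}$, and the hypothesis $a>K$ ensures $a-K>0$, which is precisely what makes $(D_{p_0}^\alpha A)*B$ well-defined and equal to the differentiated convolution.

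For the first identity I would argue as follows. By Proposition \ref{prop:lossofregularity}, $D_{p_0}^\alpha A\in\Psi^{a-K}$ with $a-K>0$, so $(D_{p_0}^\alpha A)*B$ is a well-defined element of $\Psi^{a-K+b}$ by Theorem \ref{th:convolutionab}. It remains to show it equals $D_{p_0}^\alpha(A*B)$. Fix $(t,p_0,p)$ with $t>0$. Split the time integral at $t/2$ as in the proof of Theorem \ref{th:convolutionab}; on $[0,t/2]$ use the scaled form $\check I^A$, on $[t/2,t]$ the scaled form $\check I^B$. In $\check I^A$, the variable $p_0$ appears in $\check A(\sigma\tau,p_0,\check p_*)$, in $p=P_{\tau,p_0}(\check p)$ hence in $\check B(\theta\tau,p,\check p_B)$, and in $\check p_B$; each of these is smooth in $p_0$, and the resulting integrand together with its $p_0$-derivatives up to order $|\alpha|$ is dominated, uniformly for $p_0$ in a neighbourhood, by $C\,\sigma^{-1+2a}\theta^{-2(5-b)}$ times a Schwartz function of $\check p_*$ — this is exactly estimate \eqref{eq:uniformbound0} applied with $\alpha$ in place of $\beta$-less multi-indices, adapted to include the factor $\sigma^{-1+2a}$. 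Since $a>0$ this dominating function is integrable in $(\sigma,\check p_*)$, so dominated convergence permits differentiation under the integral sign, and one checks the differentiated integrand is exactly the scaled form of $(D_{p_0}^\alpha A)*B$ on $[0,t/2]$. The piece $\check I^B$ is handled identically, noting $p_0 = P_{-\tau,p}(\check p_0)$ so $D_{p_0}$ translates into a bounded-coefficient combination of $D_{\check p_0}$ derivatives of the scaled integrand. The second identity, $D_p^\alpha(A*B) = A*(D_p^\alpha B)$ for $b>K$, is proved in the same way with the roles of the two factors and of the two halves of the time integral interchanged.

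For the time-derivative identity, I would first note the algebraic relation $\mathcal T(A*B) = (\mathcal TA)*B + A*(\mathcal TB)$ recorded just before Proposition \ref{prop:regularconvolution}; applying $\partial_t$ and using $\partial_t\mathcal T = \mathcal T\partial_t + \mathrm{id}$ reduces the claim to justifying $\partial_t$ acting through the integral. Write $(A*B)_t(p_0,p) = \int_0^t \int_{\mathbb R^3} A_s(p_0,p_*)B_{t-s}(p_*,p)\,\mathrm dp_*\,\mathrm ds$; formally the fundamental theorem of calculus produces a boundary term plus $\int_0^t\int A_s(p_0,p_*)(\partial_tB)_{t-s}(p_*,p)$. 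The boundary term is the problematic singular object, and this is why the identity is stated in the form $\partial_t(A*B) = \mathcal T^{-1}(A*B + (\mathcal T\partial_t A)*B + A*(\mathcal T\partial_t B))$: multiplying through by $\mathcal T$, the claim becomes $\mathcal T\partial_t(A*B) = A*B + (\mathcal T\partial_t A)*B + A*(\mathcal T\partial_t B)$, i.e. $t\,\partial_t(A*B)_t = (A*B)_t + ((\mathcal T\partial_t A)*B)_t + (A*(\mathcal T\partial_t B))_t$, and now every term on the right is a genuine convolution of elements of positive-index $\Psi$ spaces (here $a,b>1$ is used so that $\mathcal T\partial_t A\in\Psi^{a-1}$ and $\mathcal T\partial_t B\in\Psi^{b-1}$ still have positive index, invoking Proposition \ref{prop:lossofregularity} and the fact that $\mathcal T$ raises the index by one). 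The hard part is the honest justification that one may differentiate under the integral sign and correctly account for the $t$-dependence in the upper limit of integration and in the second time-slot of $B$; the cleanest route is to pass to the scaled variables, where $(A*B)$ is represented by $\check I^A + \check I^B$ with $\tau$-dependence now appearing only inside smooth bounded-coefficient functions and inside $\check A,\check B\in\check\Psi$, differentiate in $\tau$ there (which is licit by the uniform estimates, exactly as above, since differentiating in $\tau$ the functions $\check A,\check B$, $\theta$, $p$, $\check p_B$ keeps the integrand dominated by an integrable-in-$(\sigma,\check p_*)$ Schwartz majorant), and then translate back using $\partial_t = (2\tau)^{-1}\partial_\tau$. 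Matching the resulting expression with the scaled forms of the three convolutions on the right-hand side is a bookkeeping computation with the identity $\theta^2 + \sigma^2 = 1$ doing the combinatorial work of splitting $t\,\partial_t$ across the two time-slots.
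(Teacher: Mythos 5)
Your reduction steps are fine: Proposition \ref{prop:lossofregularity} gives $D_{p_0}^\alpha A\in\Psi^{a-K}$ with positive index, and your algebraic normal form for the time derivative (multiplying by $\mathcal T$ so that no boundary term appears) is exactly the right way to read the third formula. The genuine gap is in the core analytic step, ``dominated convergence permits differentiation under the integral sign.'' There are two problems. First, a bookkeeping one: after the change of variables of Theorem \ref{th:convolutionab}, the substitution $p_*=P_{\sigma\tau,p_0}(\check p_*)$ (and, at fixed target point $p$, also $\check p=P_{\tau,p_0}^{-1}(p)$) depends on $p_0$, so differentiating the scaled integrand at fixed $(\sigma,\check p_*)$ is not the same as applying $D_{p_0}^\alpha$ at fixed $p_*$; chain-rule terms hit $\check B$ and $\check p_B$, and the differentiated integrand is not ``exactly the scaled form of $(D_{p_0}^\alpha A)*B$.'' Second, and more seriously: even if you differentiate in the unscaled variables, where $p_0$ enters only through $A_s(p_0,p_*)$, no integrable majorant uniform in $p_0$ exists near $s=0$ in the regime covered by the statement. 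Pointwise in $p_0$ one has $\int|D_{p_0}A_s(p_0,p_*)|\,\mathrm dp_*\lesssim s^{-1+a-2}$, fine for $a>2$; but the Schwartz decay lives in the variable $P_{\sqrt s,p_0}^{-1}(p_*)$, whose localisation moves with $p_0$ at scales $(s^2,s^{3/2},s^{1/2})$, so the supremum over a $p_0$-ball of radius $\delta$ occupies a $p_*$-set of measure of order $\delta^3$ instead of $s^4$, and the candidate majorant behaves like $\delta^3s^{-7+a}$ near $s=0$, which is not integrable unless $a>6$. So the plain dominated-convergence criterion fails precisely where it is needed.

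This is what the paper's proof is built to avoid: it never differentiates through the singular endpoint. One differentiates the truncated convolution $A\pconv_\varepsilon^{1-\varepsilon}B$, where both time slots $rt$ and $(1-r)t$ are bounded away from $0$ and commuting $\partial_{x_0}$ with the integral is harmless, and then sends $\varepsilon\to0$. The interchange of $\lim_{\varepsilon\to0}$ and $\partial_{x_0}$ is performed in the sense of distributions (equivalently, one could invoke the classical theorem on locally uniform convergence of derivatives), using Lemma \ref{lem:new-uniformab} twice: $A\pconv_\varepsilon^{1-\varepsilon}B\to A*B$ and $(\partial_{x_0}A)\pconv_\varepsilon^{1-\varepsilon}B\to(\partial_{x_0}A)*B$ locally uniformly, the latter because $\partial_{x_0}A\in\Psi^{a-2}$ still has positive index; since both sides are smooth by Theorem \ref{th:convolutionab}, the distributional identity is pointwise. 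The general $\alpha$ follows by induction, which is why the hypothesis is $a>K$. If you replace your domination step by this truncate--differentiate--pass-to-the-limit scheme, the rest of your plan goes through, and your observation that the $r$-parametrisation removes the $t$-dependence from the limits of integration then yields, with no boundary terms, exactly the stated identity $\partial_t(A*B)=\mathcal T^{-1}\big(A*B+(\mathcal T\partial_tA)*B+A*(\mathcal T\partial_tB)\big)$ under $a,b>1$.
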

For the sake of conciseness, we will use that last formula in the form
\[ (\partial_t\mathcal T)(A*B)
 = \big((\partial_t\mathcal T)A\big)*B + A*\big((\partial_t\mathcal T)B\big) \]
for $a,b>1$, which we deduce from the obvious Heisenberg commutator $\partial_t\mathcal T-\mathcal T\partial_t = \mathrm{id}$.

\begin{proof}
We describe the situation with the single derivative $\partial_{x_0}$, i.e. we prove
\[ \partial_{x_0}\big(A*B\big)
 = \big(\partial_{x_0}A\big)*B \]
for $a>2$, $b>0$. The other derivatives behave in the same way, and the result follows by induction.

Because the integrand of the partial convolution $(\partial_{x_0}A)\pconv_\varepsilon^{1-\varepsilon}B$ is locally uniformly integrable, the derivative commutes with the partial convolution, and we have
\[ \partial_{x_0}\Big(A\pconv_\varepsilon^{1-\varepsilon}B\Big)
= \big(\partial_{x_0}A\big)\pconv_\varepsilon^{1-\varepsilon}B.\]
Suppose $a>2$ and $b>0$. According to Lemma \ref{lem:new-uniformab}, we know that the following sequence of equalities hold, in the sense of distributions.
\begin{align*}
   \partial_{x_0}\big(A*B\big)
&= \partial_{x_0}\Big(\lim_{\varepsilon\to0}A\pconv_\varepsilon^{1-\varepsilon}B\Big)
 = \lim_{\varepsilon\to0}\partial_{x_0}\Big(A\pconv_\varepsilon^{1-\varepsilon}B\Big) \\
&= \lim_{\varepsilon\to0}\Big(\big(\partial_{x_0}A\big)\pconv_\varepsilon^{1-\varepsilon}B\Big)
 = \big(\partial_{x_0}A\big)*B.
\end{align*}
This shows that the expected equality of functions holds in the sense of distributions; however, since both sides are actually smooth functions (from Theorem \ref{th:convolutionab}), they coincide everywhere as announced.
\end{proof}

\section{Study of the approximation}
\label{sec:checku}

Recall that $p\mapsto\widetilde u_t(p_0,p)$ is the density of $p_t$, where $p$ is the solution to equation \eqref{eq:tildeSDE} with initial condition $p_0$. As described in Section \ref{ssec:scalings}, in the coordinates given by the transformations $P_{\tau,p_0}$, $\widetilde u$ is independent of time and the initial condition, see in particular equation \eqref{eq:defchecku} where the rescaled version $\check u$ is defined.

Recall that $\check u$ is the density of the vector
\[ \xi
 = \begin{pmatrix}
     \xi^x \\ \xi^y \\ \xi^\phi
   \end{pmatrix}
 = \begin{pmatrix}
     -\frac12\int_0^1W_s^2\mathrm ds \\
     \int_0^1W_s\mathrm ds \\
     W_1
   \end{pmatrix}. \]
In this section, we show that it is of Schwartz class, and that $\check u(\check p)>0$ if and only if $2\check y<\check x^2$. As is clear from equation \eqref{eq:defchecku} and the definition \ref{def:Psia} of the spaces $\Psi^a$, it shows directly that $\widetilde u\in\Psi^1$.

\begin{theorem}
\label{th:checkuPsi}
The function $\check u$ is of Schwartz class. In other words, for every multi-index $\alpha$ and integer $k$, the quantity
\[ |\check p|^k\cdot\big|D_{\check p}^\alpha\check u(p)\big| \]
is uniformly bounded in $\check p$. 
\end{theorem}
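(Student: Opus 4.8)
The plan is to realise $\check u$ as the density of a nondegenerate smooth Wiener functional and to combine the Malliavin integration-by-parts formula — which delivers the smoothness together with a convenient representation of the derivatives of $\check u$ — with elementary tail bounds on the three coordinates of $\xi$, which deliver the rapid decay.

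First I would record the Malliavin derivatives, $D_r\xi^\phi=1$, $D_r\xi^y=1-r$ and $D_r\xi^x=-\int_r^1 W_s\,\mathrm ds$ for $r\in[0,1]$; the $\xi^i$ are polynomials in Wiener integrals, hence smooth in the Malliavin sense. The Malliavin covariance matrix $\gamma=(\langle D\xi^i,D\xi^j\rangle_{L^2[0,1]})_{ij}$ has a constant, invertible $(y,\phi)$-block of determinant $1/12$, so $\det\gamma$ equals $\tfrac1{12}$ times the squared $L^2[0,1]$-distance from $r\mapsto\int_r^1 W_s\,\mathrm ds$ to the two-dimensional space of affine functions of $r$. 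Expanding that distance along an orthonormal basis $(\hat e_k)_{k\ge2}$ of the orthogonal complement of the affine functions writes it as $\sum_{k\ge2}c_k^2$, where $c_k=-\int_0^1\bigl(\int_0^s\hat e_k(r)\,\mathrm dr\bigr)W_s\,\mathrm ds$ is a Gaussian sequence whose finite-dimensional marginals are nondegenerate (a relation $\sum a_k c_k=0$ a.s. forces $\sum a_k\hat e_k\equiv0$, hence $a\equiv0$). A one-line small-ball bound — the marginal $(c_2,\dots,c_{m+1})$ has bounded density, so $\mathbb P\bigl(\det\gamma<\varepsilon\bigr)\lesssim_m\varepsilon^{m/2}$ for every $m$ — then gives $(\det\gamma)^{-1}\in\bigcap_p L^p(\Omega)$. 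By the standard nondegeneracy criterion this already yields $\check u\in C^\infty$, and, more usefully, the integration-by-parts formula represents each derivative as $D_{\check p}^\alpha\check u(\check p)=\pm\,\mathbb E\bigl[\mathbf 1_{E_{\check p}}H_\alpha\bigr]$ with $H_\alpha\in\bigcap_p L^p(\Omega)$, where $E_{\check p}$ may be taken to be any of the eight orthants $\{\pm(\xi^x-\check x)>0,\ \pm(\xi^y-\check y)>0,\ \pm(\xi^\phi-\check\phi)>0\}$ and $H_\alpha$ depends only on $\alpha$ and on the chosen orientation.

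It then remains to bound $\mathbb P(E_{\check p})$, choosing for each $\check p$ the orientation that makes each of the three defining events a genuine tail event. Here $\xi^\phi=W_1\sim\mathcal N(0,1)$ and $\xi^y=\int_0^1W_s\,\mathrm ds\sim\mathcal N(0,1/3)$ have Gaussian tails, while $\xi^x=-\tfrac12\int_0^1W_s^2\,\mathrm ds$ is supported on $(-\infty,0]$ with $\mathbb P(\xi^x<-t)=\mathbb P\bigl(\int_0^1W_s^2\,\mathrm ds>2t\bigr)\le e^{-t}(\cos1)^{-1/2}$, by Markov's inequality and the Cameron–Martin identity $\mathbb E\bigl[\exp\bigl(\tfrac{\theta^2}2\int_0^1W_s^2\,\mathrm ds\bigr)\bigr]=(\cos\theta)^{-1/2}$ for $0\le\theta<\pi/2$; in particular $\check u$ and all its derivatives vanish identically on $\{\check x>0\}$. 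Since $E_{\check p}$ is contained in whichever single-coordinate tail event is the sharpest, $\mathbb P(E_{\check p})\le C\exp(-c|\check p|)$ for $|\check p|\ge1$, and Cauchy–Schwarz gives $|\check p|^k\,|D_{\check p}^\alpha\check u(\check p)|\le|\check p|^k\,\|H_\alpha\|_{L^2}\,\mathbb P(E_{\check p})^{1/2}\to0$, which is the claim.

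The only genuinely technical ingredient is the $L^p(\Omega)$-integrability of $(\det\gamma)^{-1}$, i.e.\ the small-ball estimate above; everything else should be bookkeeping or a direct appeal to the Cameron–Martin moment identity. A Malliavin-free variant would be to use $W_s=sW_1+B_s$ with $B$ a Brownian bridge to exhibit $(\xi^y,\xi^\phi)$ as a nondegenerate Gaussian pair and $\xi^x$, conditionally on it, as a constant plus a linear Gaussian functional plus $\int_0^1(\text{twice-conditioned bridge})^2$, a generalised $\chi^2$; the joint density then factors and one reads off Gaussian decay in $(\check y,\check\phi)$, exponential decay in $\check x$, and infinite-order flatness at $\check x=0$ — again a small-ball estimate. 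I expect the Malliavin version to be the shortest to write cleanly.
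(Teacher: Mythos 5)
Your argument is essentially correct, and it takes a genuinely different route from the paper. The paper's proof (due to Izyurov) is Fourier-analytic: it expands $W$ in the sine basis, writes the characteristic function of $\xi$ as an explicit convergent product, i.e. $\hat u=\exp(f)$ with $f$ given by the series \eqref{eq:logFourierxi}, shows that $f$ extends holomorphically to a strip with at most polynomial growth while $-\Re f$ grows at least like a small positive power of $|(\lambda,\mu,\nu)|$, and then invokes a lemma on holomorphic exponentials (Cauchy estimates on the strip) to conclude that $\hat u$, hence $\check u$, is Schwartz. Your route is the standard Malliavin one: the components of $\xi$ live in the first two Wiener chaoses, the Malliavin covariance has the exact structure you describe ($\det\gamma=\tfrac1{12}\,\mathrm{dist}_{L^2}^2$ of $r\mapsto-\int_r^1W_s\,\mathrm ds$ to the affine functions), your small-ball estimate for finitely many of the Gaussian coefficients $c_k$ does give $(\det\gamma)^{-1}\in\bigcap_pL^p$, and the orthant integration-by-parts representation combined with the Gaussian tails of $(\xi^y,\xi^\phi)$ and the Cameron--Martin bound $\mathbb P(\xi^x<-t)\le(\cos 1)^{-1/2}e^{-t}$ yields the rapid decay of all derivatives; the one-sided support in $\{\check x\le0\}$ comes for free. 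What each approach buys: yours is softer and more robust --- it would survive perturbations of the functional where no closed-form characteristic function is available, and the only real work is the non-degeneracy estimate --- whereas the paper's computation produces, as a by-product, the closed form of the Fourier transform recorded in Appendix \ref{app:explicitFourier}, and keeps everything elementary (no Malliavin machinery). Note also that your Cameron--Martin identity is essentially the $\mu=\nu=0$ slice of the paper's formula \eqref{eq:explicitFourier}, so the two proofs share the same underlying Gaussian computation, packaged differently. If you write the Malliavin version up, the two points to make precise are the derivative/orthant form of the density representation (with weights $H_\alpha\in\bigcap_pL^p$ independent of $\check p$) and the layer-cake argument turning the small-ball bound into moments of $(\det\gamma)^{-1}$; both are standard.
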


Although not necessary for our purposes, an explicit expression for its Fourier transform is given in Appendix \ref{app:explicitFourier} below.

\begin{proof}
\emph{Infinite product expansion.} This proof was provided by Konstantin Izyurov as the answer \cite{Izyurov} to a question of the author on Mathoverflow.

Let us compute the characteristic function $\hat u$ of $\xi$; it will be of Schwartz class if and only if $\check u$ is. We can expand the Brownian trajectory as
\[ W_t = \sqrt2\sum_{n\geq0}(-1)^n\omega_n\frac{\sin\big(\pi(n+1/2)t\big)}{\pi(n+1/2)}, \]
for $(\omega_n)_{n\geq0}$ independent variables of distribution $\mathcal N(0,1)$, where the right hand side converges almost surely in $\mathcal C([0,1])$. One can see this by expanding the Hölder-continuous $4$-periodic function
\[ t\mapsto\begin{cases}
              W_{t-4k} & \text{if }4k\leq t<4k+1 \\
              W_{4k+2-t} & \text{if }4k+1\leq t<4k+2 \\
             -W_{t-4k-2} & \text{if }4k+2\leq t<4k+3 \\
             -W_{4k+4-t} & \text{if }4k+3\leq t<4k+4
           \end{cases} \]
in Fourier series. Indeed, the only non-vanishing coefficients are the Gaussian integrals
\[ 2\int_0^1W_t\sin\big(\pi(n+1/2)t\big)\mathrm dt, \]
which are Gaussian variables of variance
\[ \frac2{\pi^2(n+1/2)^2} \]
and with vanishing covariance.

Since all components of the random vector $\xi$ are continuous images of $W$ with respect to the uniform topology, we can define
\begin{align}
\label{eq:defxin}
\xi^x_n    &:= -\frac12\frac{\strut\omega_n^2}{\strut\pi^2(n+1/2)^2}, &
\xi^y_n    &:= \sqrt 2\frac{\strut(-1)^n\omega_n}{\strut\pi^2(n+1/2)^2}, &
\xi^\phi_n &:= \sqrt 2\frac{\strut\omega_n}{\strut\pi(n+1/2)},
\end{align}
and $\xi_n:=(\xi^x_n,\xi^y_n,\xi^\phi_n)$, and have almost surely
\[ \xi = \sum_{n\geq0} \xi_n. \]
In particular, the characteristic function of $\xi$ is the product of the characteristic functions of $\xi_n$, in the pointwise sense.

It is well-known that for $\omega$ a standard Gaussian variable and $\alpha,\beta\in\mathbb C$, $\Re\alpha<1/2$,
\[ \mathbb E[\exp(\alpha\omega^2+\beta\omega)] = \frac{\mathsf e^{\frac{\beta^2}{2-4\alpha}}}{\sqrt{1-2\alpha}}, \]
where the square root is the analytic continuation over $\{\Re z>0\}$ of the usual square root over $\mathbb R^*_+$. This can be seen by noticing that the expectation has to be analytic with respect to $\alpha$ and $\beta$, and computing the integral for $\alpha$ and $\beta$ real. Defining a complex logarithm over the same domain, it means that
\begin{equation}
\label{eq:Fourierxin}
   \mathbb E\left[\mathsf e^{\mathsf i(\lambda,\mu,\nu)\cdot\xi_n}\right]
 = \exp\left( - \frac12\ln\left(1+\frac{\mathsf i\lambda}{\pi^2(n+1/2)^2}\right)
              - \frac{\big(\frac{(-1)^n\mu}{\pi(n+1/2)}+\nu\big)^2}{\pi^2(n+1/2)^2+\mathsf i\lambda}\right) 
\end{equation}
and the characteristic function $\hat u$ is the product of these functions. In other words, $\hat u$ can be expressed as $\exp(f)$, where
\begin{equation}
\label{eq:logFourierxi}
   f(\lambda,\mu,\nu)
 = - \sum_{n\geq0}\frac12\ln\left(1+\frac{\mathsf i\lambda}{\pi^2(n+1/2)^2}\right)
   - \sum_{n\geq0}\frac{\big(\frac{(-1)^n\mu}{\pi(n+1/2)}+\nu\big)^2}{\pi^2(n+1/2)^2+\mathsf i\lambda}.
\end{equation}

Let us state a fact about the regularity of holomorphic exponentials that we will prove later. Let $f:\mathbb R^n\to\mathbb C$ be a function such that
\begin{enumerate}
\item it extends to a holomorphic function $F:\big(\mathbb R+\mathsf i(-\varepsilon,\varepsilon)\big)^n\to\mathbb C$;
\item the extension $F$ grows at most polynomially;
\item the exponential $\exp(-f)$ grows faster than any polynomial.
\end{enumerate}
Then $\exp(f)$ is of Schwartz class.

In our case, note that $f$ extends holomorphically over $\{\zeta:\Im\zeta<\pi^2/4\}\times\mathbb C^2$. Accordingly, we will apply the previous result for some given $\varepsilon<\pi^2/4$. We will prove hypotheses (2) and (3) above in two steps: first, the absolute value of $f$ is bounded by polynomials; then, the real part of $-f$, for large arguments, is bounded below by $(|\lambda|+|\mu|+|\nu|)^c$ for some $c>0$ small enough. From now on we fix an $\varepsilon$ in $(0,\pi^2/4)$.
\medskip

\emph{Bound for the absolute value.}
We will show that the following inequalities hold up to universal constants.
\begin{align*}
&          \left|\sum_{n\geq0}\ln\left(1+\frac{\mathsf i\lambda}{\pi^2(n+1/2)^2}\right)\right|
\lesssim 1+|\lambda|^{1/2}\ln(1+|\lambda|) &
& \text{over }\big(\mathbb R+\mathsf i(-\varepsilon,\varepsilon)\big)^3, \\
&          \left|\sum_{n\geq0}\frac{\big(\frac{(-1)^n\mu}{\pi(n+1/2)}+\nu\big)^2}
                                   {\pi^2(n+1/2)^2+\mathsf i\lambda}\right|
\lesssim |\mu|^2+|\nu|^2 &
& \text{over }\big(\mathbb R+\mathsf i(-\varepsilon,\varepsilon)\big)^3.
\end{align*}

To deal with the first inequality, notice that the sum is smooth over $\big(\mathbb R+\mathsf i[-\varepsilon,\varepsilon]\big)^3$ as a uniform limit of analytic functions, so we can focus on the case $\lambda$ large. For the inequalities we write in the following, the order of the quantifiers is understood as follows: there exists $M,C>0$ such that for all $|\lambda|>M$ in the considered domain, the inequality holds with implicit constant $C$.

For the first inequality, a rough estimate is given by
\[       \left|\ln\left(1+\frac{\mathsf i\lambda}{\pi^2(n+1/2)^2}\right)\right|
    \leq \frac\pi2 + \ln\left|1+\frac{\mathsf i\lambda}{\pi^2(n+1/2)^2}\right|
\lesssim \ln(1+|\lambda|). \]
For large $n$, namely $n^2>|\lambda|$, we have the further estimate
\[       \left|\ln\left(1+\frac{\mathsf i\lambda}{\pi^2(n+1/2)^2}\right)\right|
    \leq \sup_{|\zeta|<1/\pi^2}|\ln'(1+\zeta)|\cdot\left|\frac{\mathsf i\lambda}{\pi^2(n+1/2)^2}\right|
\lesssim \frac{|\lambda|}{n^2}. \]
It means that the sum can be estimated as
\[       \left|\sum_{n\geq0}\ln\left(1+\frac{\mathsf i\lambda}{\pi^2(n+1/2)^2}\right)\right|
\lesssim \sum_{n^2\leq|\lambda|}\ln(1+|\lambda|) + \sum_{n^2>|\lambda|}\frac{|\lambda|}{n^2}
\lesssim |\lambda|^{1/2}\ln(1+|\lambda|). \]

For the second estimate, from the fact that
\[   |\pi^2(n+1/2)^2+\mathsf i\lambda|^2
   = \big(\pi^2(n+1/2)^2-\Im\lambda\big)^2 + (\Re\lambda)^2
\geq (\pi^2(n+1/2)^2-\varepsilon)^2 \]
we get the bound
\[       \left|\sum_{n\geq0}\frac{\big(\frac{(-1)^n\mu}{\pi(n+1/2)}+\nu\big)^2}
                                 {\pi^2(n+1/2)^2+\mathsf i\lambda}\right|
\leq     \sum_{n\geq0}\frac{\frac4{\pi^2}\mu^2+\frac4\pi|\mu\nu|+\nu^2}{\pi^2(n+1/2)^2-\varepsilon}
\lesssim |\mu|^2+|\nu|^2. \]
\medskip

\emph{Lower bound for the real part.}
We consider now the real parts. From now on, all imaginary parts of $\lambda$, $\mu$, $\nu$ are zero. We will show the inequalities
\begin{align*}
&        |\lambda|^{1/2}
\lesssim 1 + \Re\sum_{n\geq0}\ln\left(1+\frac{\mathsf i\lambda}{\pi^2(n+1/2)^2}\right) &
& \text{over }\mathbb R^3, \\
&        \frac{|\mu|^2+|\nu|^2}{1+|\lambda|^2}
\lesssim 1 + \Re\sum_{n\geq0}\frac{\big(\frac{(-1)^n\mu}{\pi(n+1/2)}+\nu\big)^2}
                                  {\pi^2(n+1/2)^2+\mathsf i\lambda} &
& \text{over }\mathbb R^3.
\end{align*}
A lower bound for the real part of $-f$ follows for instance from considering the direction of the inequality between $|\lambda|^2$ and $|\mu|+|\nu|$:
\begin{align*}
          1 - \Re f(\lambda)
& \gtrsim |\lambda|^{1/2} + \frac{|\mu|^2+|\nu|^2}{1+|\lambda|^2} \\
& \geq \begin{cases}
         \frac12|\lambda|^{1/2} + \frac12(|\mu|+|\nu|)^{1/4} + 0 & \text{for }|\lambda|^2\geq|\mu|+|\nu| \\
         |\lambda|^{1/2} + \frac12\cdot\frac{(|\mu|^2+|\nu|^2)^{1/2}|\lambda|^2}{1+|\lambda|^2}
           \gtrsim |\lambda|^{1/2} + |\mu|+|\nu| & \text{for }|\lambda|^2\leq|\mu|+|\nu|.
       \end{cases}
\end{align*}
Since these grow faster than, say, $(|\lambda|+|\mu|+|\nu|)^{1/4}$, $\exp(-f)$ does indeed grow super-polynomially. As in the previous discussion, we focus on the case where the arguments are large.

The real part of a given logarithm rewrites as
\[ \Re\ln\left(1+\frac{\mathsf i\lambda}{\pi^2(n+1/2)^2}\right)
 = \ln\left|1+\frac{\mathsf i\lambda}{\pi^2(n+1/2)^2}\right|
 = \frac12\ln\left(1+\frac{\lambda^2}{\pi^4(n+1/2)^4}\right). \]
In particular, it is always non-negative, so it suffices to show the inequality for a sum over some subset. In fact, for $n^2>|\lambda|^{1/2}$,
\[      \ln\left(1+\frac{\lambda^2}{\pi^4(n+1/2)^4}\right)
   \geq \inf_{0\leq s\leq1/\pi^4}\ln'(1+s)\cdot\frac{\lambda^2}{\pi^4(n+1/2)^4}
\gtrsim \frac{\lambda^2}{n^4} \]
so we can consider only such large terms:
\[      \sum_{n^2\geq|\lambda|}\ln\left(1+\frac{\lambda^2}{\pi^4(n+1/2)^4}\right)
\gtrsim \sum_{n^2\geq|\lambda|}\frac{\lambda^2}{n^4}
\gtrsim \lambda^{1/2}. \]

Finally, we consider the real part of the quadratic form. Since
\[   \Re\left(\frac1{\pi^2(n+1/2)^2+\mathsf i\lambda}\right)
   = \frac{\Re(\pi^2(n+1/2)^2-\mathsf i\lambda)}{|\pi^2(n+1/2)^2+\mathsf i\lambda|^2}
\geq \frac{\frac{\pi^2}4}{\pi^4(n+1/2)^4+\lambda^2}
   > 0, \]
it suffices to know that the sum over a finite subset grows at least linearly. But it is already the case considering the first two terms:
\begin{align*}
    \Re\sum_{n=0,1}\frac{\big(\frac{(-1)^n\mu}{\pi(n+1/2)}+\nu\big)^2}{\pi^2(n+1/2)^2+\mathsf i\lambda}
& = \frac{\frac{\pi^2}4}{\big(\frac{\pi^2}4\big)^2+\lambda^2}\left(\frac{2\mu}{\pi}+\nu\right)^2
  + \frac{\frac{9\pi^2}4}{\big(\frac{9\pi^2}4\big)^2+\lambda^2}\left(\frac{-2\mu}{3\pi}+\nu\right)^2 \\
& \gtrsim\frac1{1+\lambda^2}\left(\left(\frac{2\mu}{\pi}+\nu\right)^2+\left(\frac{-2\mu}{3\pi}+\nu\right)^2\right) \\
& \gtrsim\frac{|\mu|^2+|\nu|^2}{1+\lambda^2}.
\end{align*}
This will conclude, provided we prove the above result about the regularity of exponentials.
\medskip

\emph{Regularity of holomorphic exponentials.}
Let $F:\big(\mathbb R+\mathsf i(-\varepsilon,\varepsilon)\big)^n\to\mathbb C$ be holomorphic with polynomial growth, and suppose $\exp(-F)$ has super-polynomial growth over $\mathbb R^n$.

Fix $d>0$ such that $|F(z)|\lesssim1+|z|^d$. The crucial observation is that using the Cauchy formula, for any $z\in\big(\mathbb R+\mathsf i(-\varepsilon/2,\varepsilon/2)\big)^n$ and $\epsilon_i$ in the canonical basis, 
\[       |\partial_iF(z)|
       = \left|\frac1{2\mathsf i\pi}
               \int_{\partial B_{\varepsilon/2}}\frac{F(z+w\epsilon_i)}{w^2}\mathrm dw\right|
    \leq \frac2\varepsilon\cdot\sup_{|w|<\varepsilon/2}|F(z+w)|
\lesssim \frac{1+|z|^d}\varepsilon. \]
We then see by induction that any given derivative of $F$ has the same growth order $O(|z|^d)$ as the initial function. Note that a better constant would be given by using multiple integrals and higher powers in the Cauchy formula, which we do not need in our case.

One concludes by noticing that each derivative of $\exp(F)$ is a product of $\exp(F)$ by a polynomial in the derivatives of $F$: the latter grows at most polynomially, while the former decreases fast enough to compensate the growth of any polynomial.
\end{proof}

Although not directly necessary for the proof of the main theorem, it is of interest to get a better understanding of the support of $\check u$; we will use it in the proof of Corollary \ref{cor:multiplicativebigO}. The next two propositions explain where the mass is concentrated.

\begin{proposition}
\label{prop:support}
The support of $\check u$ is $\{\check p:\check y^2\leq-2\check x\}$.
\end{proposition}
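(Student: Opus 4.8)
The plan is to identify the support of $\check u$ with the closure of the set of values taken by the random vector $\xi = \big(-\tfrac12\int_0^1 W_s^2\,\mathrm ds,\ \int_0^1 W_s\,\mathrm ds,\ W_1\big)$; since $\check u$ is the density of $\xi$ and is continuous (indeed Schwartz, by Theorem~\ref{th:checkuPsi}), the support of $\check u$ coincides with the support of the law of $\xi$. So the claim amounts to showing that the support of $\operatorname{Law}(\xi)$ is exactly $S := \{\check p : \check y^2 \le -2\check x\}$.

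First I would prove the inclusion $\operatorname{supp}\operatorname{Law}(\xi) \subset S$. This is the deterministic half: for \emph{every} continuous path $w:[0,1]\to\mathbb R$ with $w(0)=0$, the Cauchy--Schwarz inequality gives $\big(\int_0^1 w(s)\,\mathrm ds\big)^2 \le \int_0^1 w(s)^2\,\mathrm ds$, i.e. $\check y^2 \le -2\check x$ for the corresponding point. Since every realisation of $W$ satisfies this, the law of $\xi$ is supported in the closed set $S$.

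The substantial direction is the reverse inclusion $S \subset \operatorname{supp}\operatorname{Law}(\xi)$. Here I would use the support theorem for Brownian motion (Wiener measure gives positive mass to every uniform neighbourhood of every continuous path starting at $0$), combined with the fact that $\xi$ is a continuous function of $W$ for the uniform topology. Thus it suffices to show that every point of the \emph{interior} of $S$ — or more precisely every point of $S$ that can be approximated by interior points, which is all of $S$ since $S$ is the closure of its interior — is $\xi(w)$ for some smooth path $w$ with $w(0)=0$; then $\check u$ is positive there, and by continuity the support is all of $S$. Concretely: given a target $(\check x,\check y,\check\phi)$ with $\check y^2 < -2\check x$ strictly, I would exhibit an explicit path, e.g. of the form $w(s) = as + bs^2$ or a short family thereof, and solve the three scalar equations $\int_0^1 w = \check y$, $w(1) = \check\phi$, $\int_0^1 w^2 = -2\check x$; the strict inequality is exactly what guarantees a solution (Cauchy--Schwarz is strict unless $w$ is constant). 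The boundary case $\check y^2 = -2\check x$ forces $w \equiv \check\phi$ constant, giving the single point $(\,-\check\phi^2/2,\ \check\phi,\ \check\phi\,)$ on $\partial S$ for each $\check\phi$; these lie in the closure of the interior, so they belong to the support as well.

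The main obstacle is the explicit construction of paths realising prescribed values of the triple $\big(\int_0^1 w,\ w(1),\ \int_0^1 w^2\big)$ subject to the single constraint $\check y^2 < -2\check x$; one must check that a parametrised family (say quadratics, or piecewise-linear paths) has enough freedom to hit every interior point, and a dimension count ($3$ parameters, $3$ equations) together with a careful analysis of the range of $\int_0^1 w^2$ as the first two moments are held fixed should settle it. A cleaner alternative, which I would actually prefer to write up, is to argue directly via the support theorem that $\operatorname{supp}\operatorname{Law}(\xi) = \overline{\{\xi(w) : w \in \mathcal C([0,1]),\ w(0)=0\}}$ and then to compute this closure by the Cauchy--Schwarz characterisation above, observing that for a fixed pair $(\check\phi,\check y)$ the quantity $\int_0^1 w^2$ ranges over all of $[\max(\check y^2,\ \check\phi^2 - \text{something}),\infty)$ — in fact over all of $(\check y^2,\infty)$ once one allows oscillating paths — so that the reachable set is precisely $S$.
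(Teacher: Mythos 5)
Your proposal is correct and follows essentially the same route as the paper: the Cauchy--Schwarz inequality gives the inclusion of the support in $\{\check p:\check y^2\le-2\check x\}$, and the support theorem for the driving Brownian motion, combined with continuity of $\xi$ in the uniform topology, paths realising every interior point, and closedness of the support, gives the converse. The one step you defer (constructing the realising paths) is settled in the paper by an explicit choice---smooth approximations, with endpoint value $\check\phi$, of the step path $w(t)=\frac{\check y}{r}\mathbf 1_{t<r}$, $r=\check y^2/(-2\check x)<1$, which hits the target triple exactly, avoiding any dimension count---and note that your aside that ``$\check u$ is positive there'' is neither needed for the support statement nor justified by the support theorem alone; it is the content of the paper's Proposition \ref{prop:strictsupport}.
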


\begin{proof}
Almost surely, we have
\[ b^2 = \Big(\int_0^1W_s\mathrm ds\Big)^2 \leq \int_0^1W_s^2\mathrm ds = -2a, \]
so the support is included in $\{\check p:\check y^2\leq-2\check x\}$. In the opposite direction, we will use the classical Stroock-Varadhan support theorem in the following (very weak) form: if $w:[0,1]\to\mathbb R$ starting from zero is smooth, then the point
\[ \left(-\int_0^1\frac{w(s)^2}2\mathrm ds, \int_0^1w(s)\mathrm ds,w(1)\right) \]
is in the support of $\check u$ --- see for instance the original \cite[Theorem 5.2]{StroockVaradhan}.

Let $\check p$ be a point such that $\check y^2<-2\check x$. Let $w_n$ be a sequence of smooth functions from $[0,1]$ to $\mathbb R$ such that $w_n(0)=0$, $w_n(1)=\check\phi$, and $w_n\to w$ in the $L^1$ and $L^2$ topologies, where
\[w(t):=\frac{\check y}r\mathbf 1_{t<r}, \qquad r := \frac{\check y^2}{-2\check x}<1. \]
Then, according to the Stroock-Varadhan theorem,
\[ \left(-\int_0^1\frac{w_n(s)^2}2\mathrm ds, \int_0^1w_n(s)\mathrm ds,w_n(1)\right)\in\mathrm{supp}(\check u), \]
hence $\check p$ is in the support as well and so is all of $\{\check p:\check y^2\leq-2\check x\}$.
\end{proof}

\begin{proposition}
\label{prop:strictsupport}
We have $\check u(\check p)>0$ if and only if $\check y^2<-2\check x$.
\end{proposition}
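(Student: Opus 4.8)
\emph{Approach.} The plan is to combine the support computation of Proposition~\ref{prop:support} with the classical positivity criterion for densities of non-degenerate Wiener functionals. Throughout, write $H=\{h\colon[0,1]\to\mathbb R\ :\ h(0)=0,\ h'\in L^2\}$ for the Cameron--Martin space and $S(h)=\big({-\tfrac12}\int_0^1 h^2,\ \int_0^1 h,\ h(1)\big)$ for the skeleton of $\xi$; then $\xi$ is a smooth Wiener functional (it has a density by Theorem~\ref{th:checkuPsi}) whose deterministic counterpart at a control $h$ is $S(h)$ and whose Malliavin derivative at $h$ is $DS(h)\colon k\mapsto\big({-}\int_0^1 hk,\ \int_0^1 k,\ k(1)\big)$.

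\emph{The soft inclusion.} Since $\check u$ is continuous and non-negative (Theorem~\ref{th:checkuPsi}), the set $\{\check u>0\}$ is open, and by Proposition~\ref{prop:support} it is contained in the closed set $\{\check y^2\le-2\check x\}$, hence in its interior. As the gradient of $(\check x,\check y,\check\phi)\mapsto\check y^2+2\check x$ never vanishes, that interior is exactly $\{\check y^2<-2\check x\}$, so $\{\check u>0\}\subseteq\{\check y^2<-2\check x\}$.

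\emph{The reverse inclusion.} Fix $\check p=(\check x,\check y,\check\phi)$ with $\check y^2<-2\check x$. I will produce $h\in H$ with $S(h)=\check p$ and $DS(h)$ surjective, and then invoke the positivity criterion. To find $h$, work with $g=h'\in L^2$ and the affine subspace $\mathcal L=\{g\in L^2:\int_0^1 g=\check\phi,\ \int_0^1(1-u)g(u)\,\mathrm du=\check y\}$, which encodes $h(1)=\check\phi$ and $\int_0^1 h=\check y$. On $\mathcal L$ the map $g\mapsto\int_0^1\big(\int_0^t g\big)^2\mathrm dt=\int_0^1 h^2$ is a strictly convex quadratic, unbounded above, whose infimum over $\mathcal L$ equals $\check y^2$: the bound $\int h^2\ge(\int h)^2$ is Cauchy--Schwarz, and the value $\check y^2$ is approached along functions equal to $\check y$ away from thin end layers, corrected by a fixed small multiple of some $\psi\in H$ with $\psi(1)=0$ and $\int\psi=1$ so as to meet the two constraints exactly. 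By connectedness of $\mathcal L$ and the intermediate value theorem, this functional attains the value $-2\check x>\check y^2$; the associated $h(t)=\int_0^t g$ then satisfies $S(h)=\check p$, and $h\not\equiv0$ because $\int_0^1 h^2=-2\check x>0$. Surjectivity of $DS(h)$ is a short computation: writing $\kappa=k'$, the three coordinates of $DS(h)[k]$ are the $L^2$-scalar products of $\kappa$ against $-\!\int_{\cdot}^1 h$, $\,1-\mathrm{id}\,$ and $\mathbf 1$, and these are linearly independent precisely when $\int_{\cdot}^1 h$ is not affine, i.e.\ when $h$ is not constant --- which holds here. The positivity criterion for Wiener functionals --- the companion of the Stroock--Varadhan support theorem used in Proposition~\ref{prop:support}, asserting that if the deterministic Malliavin matrix $DS(h)DS(h)^{*}$ is invertible then the density is strictly positive at $S(h)$ --- then gives $\check u(\check p)>0$, and the two inclusions together prove the proposition.

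\emph{Main obstacle.} The only non-elementary ingredient is this last positivity criterion; everything else is linear algebra plus a one-dimensional intermediate value argument. If one prefers a self-contained proof, it follows along the usual lines: surjectivity of $DS(h)$ puts $S$ in submersion normal form near $h$, and a Girsanov change of measure replacing $W$ by $W+h$ reduces the claim to the strict positivity at the origin of the density of a Wiener functional whose Malliavin matrix is non-degenerate there, which in turn follows from the integration-by-parts representation of that density.
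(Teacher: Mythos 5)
Your argument is correct, but it takes a genuinely different route from the paper. The paper never invokes a positivity criterion: it writes $\check u$ as a convolution square, using time-homogeneity to express $\widetilde u_1(0,\cdot)$ as $\widetilde u_{1/2}\star\widetilde u_{1/2}$, identifies the support of $\widetilde u_{1/2}(0,\cdot)$ by the same Stroock--Varadhan argument as in Proposition \ref{prop:support}, and then observes that for $\check y^2<-2\check x$ the intermediate point $(\check x/2,\check y/2,0)$ lies in the interior of the support of both factors, so their (closed, empty-interior) zero sets cannot cover a neighbourhood of it; positivity of the integral over the set where both factors are positive follows. You instead construct an explicit Cameron--Martin control $h$ with skeleton $S(h)=\check p$ --- the Cauchy--Schwarz lower bound, the thin-layer approximants corrected by $\psi$, and the intermediate value theorem on the convex set $\mathcal L$ are all sound, and $-2\check x>\check y^2\geq0$ indeed forces $h\not\equiv0$, hence non-constant since $h(0)=0$ --- then check surjectivity of $DS(h)$ via the linear independence of $\int_\cdot^1h$, $1-\mathrm{id}$ and $\mathbf 1$ in $L^2$, and finally cite the positivity criterion for non-degenerate Wiener functionals (Ben Arous--L\'eandre, Aida--Kusuoka--Stroock). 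That criterion is a genuine theorem whose regularity hypotheses are trivially met here, since $\xi(\omega+h)$ is an explicit quadratic polynomial in $h$ and the density is already known to be smooth by Theorem \ref{th:checkuPsi}, so your proof stands; but it imports machinery substantially heavier than anything the paper uses, whereas the paper's convolution-square trick reuses only the support theorem already needed for Proposition \ref{prop:support} plus the Chapman--Kolmogorov property of $\widetilde u$. In exchange, your route is more systematic and would transfer verbatim to other non-degenerate polynomial functionals of the Brownian path, while the paper's argument leans on the specific semigroup structure and scaling of the approximate diffusion.
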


\begin{proof}
It is clear from the support Proposition \ref{prop:support} that the direct implication must be true. It remains to show that a given $\check p$ with $\check y^2<-2\check x$ not only belongs to the support, but is in fact a point of positive density.

For the purposes of this proof, it is useful to get back to the dynamics involved. Recall that according to equation \eqref{eq:defchecku}, $\check u$ is a translate of $\widetilde u_1(0,\cdot)$, whereas $\widetilde u_t(p_0,\cdot)$ is the density of $p_t$, for $p$ the solution of equation \eqref{eq:tildeSDE}. Since the differential equation is time-independent, we can write $\widetilde u_1$ as the convolution of $\widetilde u_{1/2}$ with itself; explicitly, we have
\begin{align*}
   \check u(\check p)
&= \int_{\mathbb R^3} \widetilde u_{1/2}\big(0,p_*+(1/2,0,0)\big) \cdot
                      \widetilde u_{1/2}\big(p_*+(1/2,0,0),\check p+(1,0,0)\big)\mathrm dp_* \\
&= \int_{\mathbb R^3} \widetilde u_{1/2}\big(0,p_*+(1/2,0,0)\big) \cdot
                      \widetilde u_{1/2}\big(0,M_{p_*}^{-1}(\check p-p_*+(1/2,0,0))\big)\mathrm dp_*,
\end{align*}
where in the second line we used the invariance property \eqref{eq:defchecku}. Based on the representation \eqref{eq:tildepintegral}, we can use the same argument as in Proposition \ref{prop:support} to see that the support of $\widetilde u_{1/2}(0,\cdot)$ is $\{p:y^2\leq-x+1/2\}$.

Consider a point $p_*$ close to $(\check x/2,\check y/2,0)$. Because of the condition on $\check p$, we know that
\[ \Big(\frac12\check y\Big)^2<-\frac12\check x, \]
so both $p_*+(1/2,0,0)$ and $M_{p_*}^{-1}(\check p-p_*+(1/2,0,0))$ belong to the interior of the support of $\widetilde u_{1/2}(0,\cdot)$ for all $p_*$ in a small neighbourhood of $(\check x/2,\check y/2,0)$. It means that in this neighbourhood, the points such that $u_{1/2}(0,\cdot)$ is zero at either of these two points is the union of two closed sets of empty interior, so it must be of empty interior itself. In other words, defining $U$ as the open set of the $p_*$ such that we have
\begin{align*} 
\widetilde u_{1/2}\big(0,p_*+(1/2,0,0)\big)&>0 &
& \text{and} &
\widetilde u_{1/2}\big(p_*+(1/2,0,0),\check p+(1,0,0)\big) &> 0,
\end{align*}
the closure of $U$ contains a neighbourhood of $(\check x/2,\check y/2,0)$. In particular, this open set is not empty, and we see that the integral has to be positive:
\[ \check u(\check p)
 = \int_U \widetilde u_{1/2}\big(0,p_*+(1/2,0,0)\big) \cdot
            \widetilde u_{1/2}\big(p_*+(1/2,0,0),\check p+(1,0,0)\big) \mathrm dp_*
 > 0. \qedhere \]
\end{proof}

\section{Convergence of the series}
\label{sec:convergenceseries}

We now have the formal tools to prove the convergence of the series defined in equation \eqref{eq:Duhamelseries} as a candidate for the density $u$. The objective of this section is the following theorem.

\begin{theorem}
\label{th:seriesconvergence}
For all $T>0$ and $\ell\in\mathbb N$, the formal sum
\[ \widetilde u + E*\widetilde u + (E*)^2\widetilde u + \cdots \]
is in fact convergent in the $\mathcal C^\ell$ topology, uniformly over the sets $[0,T]\times\mathbb R^3\times\mathbb R^3$.
\end{theorem}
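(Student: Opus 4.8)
Fix a time horizon $T>0$ and an order $\ell\in\mathbb N$. By Section~\ref{sec:checku} and Proposition~\ref{prop:EinPsi} we have $\widetilde u,E\in\Psi^1$, so by Theorem~\ref{th:convolutionab} and the associativity of $*$ (Proposition~\ref{prop:nconvolution}) each term $(E*)^k\widetilde u=E^{*k}*\widetilde u$ is a well-defined element of $\Psi^{k+1}$. An element of $\Psi^a$ with $a\ge5$ is, like its rescaled version in $\check\Psi$, bounded over $(0,T]\times\mathbb R^3\times\mathbb R^3$ and extends continuously to $\{t=0\}$; by Proposition~\ref{prop:lossofregularity} the same is true of all derivatives up to order $\ell$ of $(E*)^k\widetilde u$ as soon as $k+1-2\ell\ge5$. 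It therefore suffices to establish a bound of the form
\[
\big\|(E*)^k\widetilde u\big\|_{\mathcal C^\ell([0,T]\times\mathbb R^3\times\mathbb R^3)}\ \le\ \frac{C(\ell,T)^{\,k}}{\lfloor k/b\rfloor!}
\qquad\text{for }k\text{ large},
\]
with a suitable integer $b=b(\ell)$: since $\lfloor k/b\rfloor!$ outgrows every exponential in $k$, the tail $\sum_{k\ge4+2\ell}(E*)^k\widetilde u$ then converges absolutely in $\mathcal C^\ell([0,T]\times\mathbb R^3\times\mathbb R^3)$, the finitely many remaining terms being smooth on $(0,T]\times\mathbb R^3\times\mathbb R^3$, which is exactly the asserted convergence.

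To obtain this bound, choose $b=b(\ell)$ large enough that $b$ minus the maximal loss of regularity inflicted by an operator of order $\le\ell$ (measured via Proposition~\ref{prop:lossofregularity}) is still at least $5$. For $k$ large write $k=qb+r$ with $q\ge1$ and $b\le r<2b$ and, using associativity, decompose
\[
(E*)^k\widetilde u\ =\ A_0*A_1*\cdots*A_q,\qquad A_0:=E^{*r}*\widetilde u\in\Psi^{r+1},\quad A_1=\cdots=A_q:=E^{*b}\in\Psi^{b},
\]
so that every factor lies in a fixed finite subset $\mathcal A\subset\Psi^5$ and there are $q+1=\lfloor k/b\rfloor+1$ of them. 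Now apply a derivative $\partial_t^nD_{p_0}^{\alpha_0}D_p^{\alpha}$ of total order at most $\ell$. By the Leibniz-type rules of Proposition~\ref{prop:derivativesconvolution} --- legitimate at each step because all $\Psi$-exponents involved stay well above the required thresholds --- the $p_0$-derivatives fall only on $A_0$, the $p$-derivatives only on $A_q$, and, using the identity $(\partial_t\mathcal T)(F*G)=((\partial_t\mathcal T)F)*G+F*((\partial_t\mathcal T)G)$ together with the relation $\partial_t\mathcal T-\mathcal T\partial_t=\mathrm{id}$, the time derivatives likewise distribute over the factors at the cost of pulling a bounded number of copies of $\mathcal T^{-1}$ out front. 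One arrives at
\[
\partial_t^nD_{p_0}^{\alpha_0}D_p^{\alpha}\big((E*)^k\widetilde u\big)\ =\ \sum_{\text{finitely many}}c\cdot\mathcal T^{-m}\big(B_0*B_1*\cdots*B_q\big),
\]
where the number of summands is bounded by a polynomial in $q$ with degree and coefficients depending only on $\ell$, each scalar $c$ and each exponent $m$ is bounded in terms of $\ell$ alone, and each $B_i$ is the image of $A_i$ under a differential operator of bounded order (some $D$'s, $\partial_t$'s and multiplications by $t$), so that every $B_i$ ranges over a fixed finite set $\mathcal B\subset\Psi^5$ (this is where the size of $b$ is used).

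Finally, Proposition~\ref{prop:regularconvolution} applied to the finite set $\mathcal B\subset\Psi^5$ and the horizon $T$ produces a constant $M>0$ such that $\big|\mathcal T^{-m}(B_0*\cdots*B_q)\big|_\infty\le M^{q+1}T^{5q-m}/q!$ whenever $m\le5q$, which holds for $k$ large since $m$ is bounded in $\ell$. Each summand above is thus at most $C(\ell,T)\,\Lambda^{q}/q!$ for some $\Lambda=\Lambda(\ell,T)$; summing the polynomially-many summands and maximising over derivatives of order $\le\ell$ gives the displayed bound with $q=\lfloor k/b\rfloor$, and the theorem follows.

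The main obstacle is the treatment of the time derivatives. The spatial ones are harmless --- they simply pass to an end factor by Proposition~\ref{prop:derivativesconvolution} --- whereas $\partial_t$ neither commutes with $*$ nor preserves the $\Psi$-hierarchy, and its Leibniz rule manufactures factors of $\mathcal T^{-1}$. One must therefore simultaneously keep every intermediate $\Psi$-exponent strictly positive (this is precisely why the block length $b$ must grow with $\ell$) and check that the combinatorics --- the number of summands, the number of $\mathcal T^{-1}$'s, and the scalar coefficients --- produce only growth polynomial in $q$, so that the factorial gain $1/q!$ supplied by Proposition~\ref{prop:regularconvolution} is not destroyed. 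Once this bookkeeping is in place, the convergence is immediate.
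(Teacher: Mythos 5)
Your proof is correct and takes essentially the same route as the paper: both decompose $(E*)^k\widetilde u$ into a convolution of boundedly many block types lying in a fixed finite subset of $\Psi^5$ (blocks taken large enough to absorb the loss of regularity from Proposition~\ref{prop:lossofregularity}), distribute the derivatives via Proposition~\ref{prop:derivativesconvolution} and the $(\partial_t\mathcal T)$ Leibniz rule with the $\mathcal T^{-1}$'s pulled out front, and invoke Proposition~\ref{prop:regularconvolution} so that the factorial gain $1/q!$ beats the polynomially many summands. The only slip is the claim that elements of $\Psi^a$ with $a\geq 5$ extend continuously to $\{t=0\}$ --- for $a=5$ exactly this can fail at points $(0,p_0,p_0)$ --- but it is harmless: start the tail one index later so that every term in play (after differentiation) has exponent strictly greater than $5$.
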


The convergence is to be understood in the following sense: even if the first few terms are not $\mathcal C^\ell$ (or even continuous) at the boundary $t=0$, the series $\sum_{k>K}(E*)^k\widetilde u$ eventually belongs to $\mathcal C^\ell([0,T]\times\mathbb R^3\times\mathbb R^3)$ and converges to zero in this topology as $K\to\infty$.

We begin the proof with a qualitative bound on the error. Recall that $E=(-\partial_t+L^*)\widetilde u$, for $L$ the generator of kinetic Brownian motion as defined in equation \eqref{eq:trueSDE}.

\begin{proposition}
\label{prop:EinPsi}
The error $E$ belongs to $\Psi^1$.
\end{proposition}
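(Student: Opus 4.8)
The plan is to compute $E = (-\partial_t + L^*)\widetilde u$ explicitly and recognize it as a finite sum of terms, each of which is a bounded-coefficient differential operator applied to $\widetilde u$, hence in $\Psi^1$ by Proposition~\ref{prop:lossofregularity} (and possibly multiplied by $\mathcal T$ to correct the exponent). The starting point is that $\widetilde u$ solves its own Fokker--Planck equation $\partial_t\widetilde u = \widetilde L^*_{p_0}\widetilde u$, so
\[ E = (-\partial_t + L^*)\widetilde u = (L^* - \widetilde L^*_{p_0})\widetilde u. \]
Thus $E$ depends only on the \emph{difference} of the two generators, which is where the small factor comes from: by construction $\widetilde X$ is the second-order Taylor expansion of $X$ about $p_0$, so $X - \widetilde X$ vanishes to third order in $\phi - \phi_0$. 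Explicitly, $X(p) - \widetilde X(p) = R(\phi-\phi_0)X(p_0) + S(\phi-\phi_0)Y(p_0)$ where $R(\theta) = \cos\theta - 1 + \theta^2/2$ and $S(\theta) = \sin\theta - \theta$, both of which are $O(\theta^3)$ near $0$; the $\tfrac12\Phi^2$ term is identical for $L$ and $\widetilde L$, so it cancels entirely and $L - \widetilde L = X - \widetilde X$ as a first-order operator. Since the formal adjoint only adds a divergence term with the same vanishing order, $L^* - \widetilde L^*_{p_0}$ is a first-order operator whose coefficients vanish to third order in $\phi - \phi_0$.

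The key computation is then to see how this gain of "three powers of $\phi - \phi_0$" interacts with the scaling. In microscopic coordinates we have $\phi - \phi_0 = \tau\check\phi$, so three powers of $\phi-\phi_0$ contribute $\tau^3$; one $\phi$-derivative costs $\tau^{-1}$ (as in the remark after Proposition~\ref{prop:lossofregularity}); and the overall prefactor in the definition of $\Psi^a$ accounts for the rest. I would write $R(\phi-\phi_0) = \tau^3\check R(\tau,\check\phi)$ and $S(\phi-\phi_0) = \tau^3\check S(\tau,\check\phi)$ with $\check R, \check S$ smooth with polynomially-bounded derivatives --- this is exactly the same $\operatorname{sinc}$-type estimate already proved in Lemma~\ref{lem:crucialquantities}. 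Combining these, a term-by-term bookkeeping of powers of $\tau$ should show that each summand of $E$, after rescaling via $P_{\tau,p_0}$, is $(\tau^2)^{5-1}$ times a $\check\Psi$ function of $(\tau,p_0,\check p)$, which is precisely the statement $E\in\Psi^1$. (As the text remarks, "$\widetilde u$ has been defined exactly so that $E$ would satisfy this property," so the exponents are engineered to work out, and the role of the third-order vanishing is to compensate exactly the loss from differentiation and scaling.)

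The main obstacle is not conceptual but organizational: one must carefully track the scaling weights of each of the mixed terms arising from $L^* - \widetilde L^*_{p_0}$ --- in particular the cross terms where the coefficient involves $X(p_0)$ versus $Y(p_0)$, since the directions $X(p_0)$ and $Y(p_0)$ carry different weights ($\tau^4$ and $\tau^3$ respectively) under $T_\tau$, and so do the derivatives $\partial_x$, $\partial_y$, $\partial_\phi$. One has to check that the "worst" term (presumably the one hitting the heaviest direction with the cheapest derivative) still lands in $\Psi^1$ rather than some $\Psi^a$ with $a < 1$; the third-order vanishing is what makes this margin tight but sufficient. I would organize the proof by: (i) deriving $E = (L^* - \widetilde L^*_{p_0})\widetilde u$ from the two Fokker--Planck equations; (ii) writing $L^* - \widetilde L^*_{p_0}$ explicitly as $\sum_j c_j(p,p_0)\,\partial_j + c_0(p,p_0)$ with each $c_j$ exhibiting an explicit factor of the form $R(\phi-\phi_0)$ or $S(\phi-\phi_0)$; (iii) invoking Lemma~\ref{lem:crucialquantities}-style estimates to write each $c_j$ in rescaled coordinates as $\tau^{m_j}$ times a polynomially-controlled smooth function of $(\tau, \check\phi)$; and (iv) combining with the scaling behaviour of the derivatives of $\widetilde u \in \Psi^1$ (Proposition~\ref{prop:lossofregularity}) to conclude that every summand, hence $E$ itself, lies in $\Psi^1$.
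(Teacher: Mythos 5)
Your overall route is the paper's: use the Fokker--Planck equation for $\widetilde u$ to reduce to $E=(\widetilde X-X)\widetilde u$, write $X-\widetilde X = R(\phi-\phi_0)X(p_0)+S(\phi-\phi_0)Y(p_0)$, pass to microscopic coordinates, and balance the vanishing of the coefficients against the cost of the directional derivatives of $\widetilde u$. (One small point: the zeroth-order ``divergence'' term you allow for is in fact identically zero, since the coefficients of $X-\widetilde X$ depend only on $\phi$ and the field has no $\partial_\phi$-component; so $L^*-\widetilde L^*_{p_0}$ is purely first order, exactly as the paper uses.)

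There is, however, a genuine quantitative slip in your bookkeeping, and as written it would make the worst term fail to land in $\Psi^1$. You record both $R(\theta)=\cos\theta-1+\theta^2/2$ and $S(\theta)=\sin\theta-\theta$ as $O(\theta^3)$ and propose to extract only $\tau^3$ from each, asserting that ``the third-order vanishing is tight but sufficient.'' It is not, for the $X(p_0)$-component: under the scaling, the derivative of $\widetilde u$ in the direction $X(p_0)$ costs $\tau^{-4}$ (it corresponds to $\tau^{-12}\partial_{\check x}\check u$ against the prefactor $\tau^{-8}$), so a coefficient supplying only $\tau^3$ would leave that term in $\Psi^{1/2}$, not $\Psi^1$, and the later statements ($k$-th term of order $t^{-4+k}$, $\check u_1=0$) would degrade accordingly. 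What saves the day --- and what the paper uses --- is that $R$ actually vanishes to \emph{fourth} order, $R(\theta)=\theta^4/24+O(\theta^6)$, precisely because $\widetilde X$ was built to include the quadratic correction $-\tfrac{(\phi-\phi_0)^2}{2}X(p_0)$. Concretely, writing $R(\tau\check\phi)=\tau^4\check\phi^4\,\mathcal C(\tau\check\phi)$ and $S(\tau\check\phi)=\tau^3\check\phi^3\,\mathcal S(\tau\check\phi)$ with $\mathcal C(x)=(\cos x-1+x^2/2)/x^4$ and $\mathcal S(x)=(\sin x-x)/x^3$ entire with polynomially bounded derivatives, one gets
\[ \tau^8E_{\tau^2}\big(p_0,P_{\tau,p_0}(\check p)\big)
 = -\,\mathcal C(\tau\check\phi)\check\phi^4\,\partial_{\check x}\check u(\check p)
   -\,\mathcal S(\tau\check\phi)\check\phi^3\,\partial_{\check y}\check u(\check p), \]
and since $\check u\in\check\Psi$ absorbs the polynomial factors $\check\phi^3,\check\phi^4$, this lies in $\check\Psi$, i.e.\ $E\in\Psi^1$. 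Once you replace ``$\tau^3$ from $R$'' by ``$\tau^4$ from $R$'' your plan coincides with the paper's proof.
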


Let us stress that the expected regularity for $E$, knowing only that $\widetilde u\in\Psi^1$, should be $E\in\Psi^{-1}$ (from the derivative in the direction $X$). This is the main ingredient in our approach: being able to find some balance between some $\widetilde u$ simple enough so that we can study its regularity, and matching the equation well enough that the error is regular.

\begin{proof}
Note that since $\widetilde u$ satisfies the Fokker-Planck equation \ref{eq:FokkerPlanck}, we have
\[ E = (-\partial_t+L^*)\widetilde u
     = (L^*-\widetilde L_{p_0}^*)\widetilde u
     = (-X+\widetilde X)\widetilde u. \]
Because of the representation of $\widetilde u$ involving $\check u$, it is convenient to write these two vector fields in the basis $M_{p_0}$.
\[ M_{p_0}^{-1}(\widetilde X - X)(p)
 = \begin{pmatrix}
     1-\frac{(\phi-\phi_0)^2}2-\cos(\phi-\phi_0) \\
     (\phi-\phi_0) - \sin(\phi-\phi_0) \\
     0
\end{pmatrix}. \]
On the other hand, taking the derivative with respect to $\check x$ and $\check y$ in equation \eqref{eq:defchecku} yields
\begin{align*}
   \partial_{\check x}\check u(\check p)
&= \tau^{8+4}(X(p_0)\cdot\widetilde u)_{\tau^2}(p_0,P_{\tau,p_0}(\check p)), \\
   \partial_{\check y}\check u(\check p)
&= \tau^{8+3}(Y(p_0)\cdot\widetilde u)_{\tau^2}(p_0,P_{\tau,p_0}(\check p)).
\end{align*}
Setting $\mathcal C$ (resp. $\mathcal S$) the analytic continuations of $(\cos(x)-1+x^2/2)/x^4$ (resp. $(\sin(x)-x)/x^3$), the above gives
\[ \tau^8E_{\tau^2}\big(p_0,P_{\tau,p_0}(\check p)\big)
 = - \mathcal C(\tau\check\phi)\check\phi^4(\partial_{\check x}\check u)(\check p)
   - \mathcal S(\tau\check\phi)\check\phi^3(\partial_{\check y}\check u)(\check p). \]
Because $\check u$ is in $\check\Psi$ and the derivatives of $\mathcal C$ and $\mathcal S$ are bounded by polynomials, $E$ does indeed belong to $\Psi^1$.
\end{proof}

We can now turn to the proof of Theorem \ref{th:seriesconvergence}.

\begin{proof}[Proof of Theorem \ref{th:seriesconvergence}.]
Recall that $\mathcal T$ is the operator of multiplication by the function $t$. We show that the following stronger result holds. For any $A\in\Psi^a$, $a>0$, and any $\alpha_0,\alpha\in\mathbb N^3$, $n,m\in\mathbb N$, there exists $k_0\geq0$ such that each term of the series
\begin{equation}
\label{eq:stronglyconvergentseries}
\sum_{k\geq k_0}\mathcal T^{-m}(\partial_t\mathcal T)^nD_{p_0}^{\alpha_0}D_p^\alpha((E*)^kA)
\end{equation}
extends to a function continuous up to the boundary, and the series converges uniformly over the sets $[0,T]\times\mathbb R^3\times\mathbb R^3$. It is indeed stronger, since the operator $(\partial_t\mathcal T)^n$ decomposes as a linear combination of $\mathcal T^j\partial_t^j$ for $j\leq n$, with leading coefficient $\mathcal T^n\partial_t^n$ (the coefficients are the Stirling numbers of the second kind). Hence, $\partial_t^n$ is a linear combination of terms of the form $\mathcal T^{-n}(\partial_t\mathcal T)^j$, for $j\leq n$ (the coefficients are the signed Stirling numbers of the first kind).

Fix exponents $\alpha_0,\alpha\in\mathbb N^3$ and $n\in\mathbb N$, as well as a time horizon $T>0$ and a function $A\in\Psi^a$, $a>0$. Set
\[ K = n + \max(\alpha_0^x + \alpha_0^y + \alpha_0^\phi,\alpha^x + \alpha^y + \alpha^\phi). \]
Then there exists a finite set of functions $\mathcal A\subset\Psi^{5+2K}$ and a constant $c>0$ such that for all $k$ large enough, one can write
\[ (E*)^kA = A_0*\cdots*A_\ell \]
with $A_i\in\mathcal A$ and $\ell\geq\max(1,k/c)$. For instance, one can take
\[ \mathcal A = \big\{ (E*)^kU, 5+2K\leq k<10+4K \big\} \cup \big\{E^{*(5+2K)}\big\}. \]
For functions in $\mathcal A$, Proposition \ref{prop:derivativesconvolution} and its following remark, by induction, yield
\begin{multline*}
\mathcal T^{-m}(\partial_t\mathcal T)^nD_{p_0}^{\alpha_0}D_p^\alpha(A_0*\cdots*A_\ell) \\
= \sum_{n_0+\cdots+n_\ell=n}\binom{n}{n_0,\cdots,n_\ell}\mathcal T^{-m}\big[(\partial_t\mathcal T)^{n_0}(D_{p_0}^{\alpha_0}A_0)*\cdots*(\partial_t\mathcal T)^{n_i}A_i*\cdots*(\partial_t\mathcal T)^{n_\ell}(D_p^\alpha A_\ell)\big].
\end{multline*}
Because $\mathcal A$ is finite, the functions $(\partial_t\mathcal T)^j(D_{p_0}^{\alpha_0}A')$, $(\partial_t\mathcal T)^j A'$ and $(\partial_t\mathcal T)^j(D_p^\alpha A')$ form a finite family for $j\leq n,A'\in\mathcal A$. Moreover, they all belong to $\Psi^5$ according to Proposition~\ref{prop:lossofregularity} since $\Psi^{5+2K}$. Using Proposition \ref{prop:regularconvolution}, there exists a constant $M>0$ such that each term in the above sum is bounded by
\[ \binom{n}{n_0,\cdots,n_\ell}\frac1{\ell!}M^{\ell+1}T^{5\ell-m} \]
whenever $5\ell-m\geq0$.

For $k$ sufficiently large, we decompose $(E*)^kA$ into a convolution product of $\ell+1$ functions in $\mathcal A$, where $\ell\geq\max(1,k/c)$ is large as well. Then
\begin{multline*}
     \big|\mathcal T^{-p}(\partial_t\mathcal T)^nD_{p_0}^{\alpha_0}D_p^\alpha((E*)^kA)\big|_\infty \\
\leq \sum_{n_0+\cdots+n_\ell=n}\binom{n}{n_0,\cdots,n_\ell}\frac1{\ell!}M^{\ell+1}T^{5\ell-m}
   = \frac1{\ell!}(\ell+1)^nM^{\ell+1}T^{5\ell-m}.
\end{multline*}
In particular, since $\ell\geq k/c$, there exists some $k_0\geq0$ such that the sum over $k\geq k_0$ extends to a series of continuous functions over $[0,T]\times\mathbb R^3\times\mathbb R^3$ that converges uniformly (at rate at least $\exp(-\varepsilon k\ln k)$ for some $\varepsilon>0$).
\end{proof}

\section{Fokker-Planck equation}
\label{sec:FokkerPlanck}

To prove that $u$ is the sum $U$ of the Duhamel series \eqref{eq:Duhamelseries}, we will show that $U$ satisfies some version of the Fokker-Planck equations \eqref{eq:FokkerPlanck}, and prove a uniqueness result. Let us sketch the uniqueness argument, and see what kind of properties of $U$ we actually need. It is in spirit a forward-backward argument, so we will use the following lemma to construct a backward diffusion. See section \ref{ssec:timereversal} for a reference.
\begin{lemma}
\label{lem:timereversal}
The generator $L$ formally preserves the Lebesgue measure, in the sense that $L^*1=0$, so its adjoint is associated to a diffusion $p^*$. This diffusion is itself Feller and hypoelliptic, so it admits a kernel $u^*$. This kernel is the time-reversal of $u$, in the sense that
\[ \int_{(\mathbb R^3)^2}f(x)u^*_t(x,y)g(y)\mathrm dx\mathrm dy
 = \int_{(\mathbb R^3)^2}g(x)u_t(x,y)f(y)\mathrm dx\mathrm dy \]
for every $f$ and $g$ continuous with compact support.
\end{lemma}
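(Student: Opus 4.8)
The plan is to isolate the three assertions in the statement --- the identity $L^*1=0$, the existence of a well-behaved diffusion $p^*$ with generator $L^*$, and the duality between $u$ and $u^*$ --- and treat them in turn, deferring the genuinely analytic inputs to Appendix~\ref{app:Fellerandco}. For the first point, I would observe that both vector fields occurring in $L$ are divergence-free: $\operatorname{div}X = \partial_x\cos\phi+\partial_y\sin\phi = 0$ and $\operatorname{div}\Phi = \partial_\phi 1 = 0$. Hence $X^*=-X$ and $(\Phi^2)^*=\Phi^2$, so $L^* = -X+\tfrac12\Phi^2$; in particular $L^*1 = 0$, and $L^*$ carries no zeroth-order term, so it is the generator of an honest diffusion $p^*$, namely the solution of $\mathrm dp^*_t = -X(p^*_t)\mathrm dt+\Phi(p^*_t)\mathrm dW_t$.

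\emph{Qualitative properties of $p^*$.} The clean way to see that $p^*$ is Feller and hypoelliptic is to note that it is just kinetic Brownian motion with reversed velocity: the affine map $\Theta:(x,y,\phi)\mapsto(x,y,\phi+\pi)$ leaves $\mathrm dW$ unchanged and flips the signs of $\cos\phi$ and $\sin\phi$, so it conjugates equation~\eqref{eq:trueSDE} into the equation for $p^*$. Thus $p^*=\Theta(p)$ in law for corresponding initial conditions, and every qualitative statement established for $p$ in Appendix~\ref{app:Fellerandco} --- non-explosion, the Feller property, Hörmander's bracket condition --- transfers verbatim. In particular $p^*$ admits a smooth kernel $u^*$ on $\mathbb R^*_+\times\mathbb R^3\times\mathbb R^3$ (and in fact $u^*_t(\Theta a,\Theta b)=u_t(a,b)$, although only the smoothness is needed here).

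\emph{Time reversal.} I would prove the stronger pointwise identity $u^*_t(x,y)=u_t(y,x)$; the stated integral formula then follows by relabelling $x\leftrightarrow y$ and applying Fubini. For this, observe that the Fokker--Planck operator of $p^*$ is $(L^*)^*=L$, so $u^*$ is the fundamental solution of $\partial_t v = L_y v$ in the forward variable $y$. On the other hand, Kolmogorov's backward equation for $p$ says that $(x,y)\mapsto u_t(y,x)$ solves the very same equation in its second variable, with the same initial datum $\delta$; uniqueness of this fundamental solution --- legitimate because the equation is hypoelliptic and the underlying diffusion Feller, see Appendix~\ref{app:Fellerandco} --- forces the two to agree. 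An equivalent route is purely semigroup-theoretic: writing $P_t$ for the transition semigroup of $p$, the facts $L^*1=0$ and $\int u_t(x,y)\,\mathrm dx=1$ (both consequences of $\operatorname{div}X=0$) show that the $\mathrm L^2(\mathrm dx)$-adjoint $P_t^*$ is a conservative, positivity-preserving semigroup with generator $L^*$; by well-posedness of the martingale problem for $p^*$ it coincides with the transition semigroup of $p^*$, and unfolding $\langle P_tf,g\rangle=\langle f,P_t^*g\rangle$ for $f,g$ continuous with compact support yields precisely the asserted equality.

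\emph{Main obstacle.} Everything substantial is pushed into Appendix~\ref{app:Fellerandco}: the Feller property and non-explosion of $p$ (hence of $p^*$), the smoothness of the kernels via Hörmander, and --- depending on which of the two arguments one adopts --- either a uniqueness theorem for the hypoelliptic Fokker--Planck equation, or the identification of the $\mathrm L^2$-adjoint semigroup with the transition semigroup of $p^*$ through a common core for $L^*$. Once those are granted, the lemma is essentially bookkeeping; the only mild care needed in the body is to justify the integration by parts used to compute $L^*$ and the constancy of $t\mapsto\int u_t(x,y)\,\mathrm dx$, which again rest on the decay estimates of the appendix.
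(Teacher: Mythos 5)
The parts of your argument dealing with the first two assertions match the paper: $L^*1=0$ comes from $\operatorname{div}X=\operatorname{div}\Phi=0$, so $L^*=-X+\tfrac12\Phi^2$ generates the diffusion $\mathrm dp^*_t=-X(p^*_t)\mathrm dt+\Phi(p^*_t)\mathrm dW_t$, and the qualitative properties transfer from $p$ to $p^*$; your symmetry $\Theta:(x,y,\phi)\mapsto(x,y,\phi+\pi)$, which conjugates equation \eqref{eq:trueSDE} into the equation for $p^*$ and gives $u^*_t(\Theta a,\Theta b)=u_t(a,b)$, is correct and in fact a tidier justification than the paper's remark that every result of Appendix \ref{app:Fellerandco} ``also holds for $p^*$''.

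The gap is in the duality identity, which is the substantive claim of Lemma \ref{lem:timereversal}. The paper does not prove it internally: since $X$ and $\Phi$ are divergence-free, it is quoted as an immediate consequence of Theorem 3.50 of \cite{StroockManifolds}. You instead invoke either ``uniqueness of the fundamental solution'' of $\partial_t v=L_yv$, claimed to be ``legitimate because the equation is hypoelliptic and the underlying diffusion Feller, see Appendix \ref{app:Fellerandco}'', or the identification of the $\mathrm L^2(\mathrm dx)$-adjoint semigroup with the transition semigroup of $p^*$ through a common core. Neither input exists in Appendix \ref{app:Fellerandco}, which only proves the Hörmander condition, non-explosion, the (exponential) Feller estimates, and the convolution Lemma \ref{lem:convolutionwu}. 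Hypoellipticity plus the Feller property do not by themselves yield uniqueness among fundamental solutions with $\delta$ initial data: one must first fix a solution class and then run a duality argument against the backward semigroup, which is essentially the content of the lemma you are trying to prove; worse, inside this paper the uniqueness statement for the Fokker--Planck equation with initial datum $\delta_{p_0}$ is exactly what Section \ref{sec:FokkerPlanck} establishes \emph{using} Lemma \ref{lem:timereversal}, so your first route is circular relative to the paper's architecture unless you import an independent uniqueness theorem. Your semigroup route is closer to a genuine proof (the martingale problem for $L^*$ is well posed since the coefficients are smooth and bounded, and conservativity of $P_t^*$ follows once invariance of Lebesgue measure, i.e.\ $\int u_t(x,y)\,\mathrm dx=1$, is justified via non-explosion and approximation), but the identification of the adjoint semigroup with that of $p^*$ through a core for $L^*$ is precisely the nontrivial analytic step, and it is neither carried out in your proposal nor available in the appendix. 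To close the argument you should either supply that functional-analytic identification in detail or, as the paper does, reduce the whole duality to the cited theorem of Stroock.
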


Let $p^*$ be defined as such. Let us fix a time horizon $T>0$, an initial point $p_0$, and a smooth probability measure $\mathrm d\mu=g(p)\mathrm dp$ on $\mathbb R^3$ with compact support (the initial condition for $p^*$). Since $U$ is smooth, we know that
\[ t\mapsto U_{T-t}(p_0,p^*_t)-\int_0^t\big((-\partial_t+L^*)U\big)_{T-s}(p_0,p^*_s)\mathrm ds \]
is a local martingale for $0\leq t<T$ and $\mathcal L(p^*_0)=\mu$. Since $U$ also has bounded derivatives when $t$ ranges over compact intervals of $(0,\infty)$, it is actually a martingale over $[0,T)$.

Suppose we have shown that $(-\partial_t+L^*)U=0$ (in the pointwise sense, but since $U$ is smooth it suffices to know that it holds in the distributional sense). This will be the point of Theorem \ref{th:FokkerPlanck}. Then
\[ \mathbb E_\mu[U_t(p_0,p^*_\varepsilon)] = \mathbb E_\mu[U_\varepsilon(p_0,p^*_t)], \]
or in other words
\begin{equation}\label{eq:utoU}
   \int g(p^*_0)u^*_\varepsilon(p^*_0,p^*)U_t(p_0,p^*)\mathrm dp^*_0\mathrm dp^*
 = \int g(p^*_0)u^*_t(p^*_0,p^*)U_\varepsilon(p_0,p^*)\mathrm dp^*_0\mathrm dp^*.
\end{equation}

If we could take the limit $\varepsilon\to0$, we would get formally
\begin{align*}
   \int g(p)U_t(p_0,p)\mathrm dp
&= \int g(p^*_0)\delta_{p^*_0}(p^*)U_t(p_0,p^*)\mathrm dp^*_0\mathrm dp^* \\
&= \int g(p^*_0)u^*_t(p^*_0,p^*)\delta_{p_0}(p^*)\mathrm dp^*_0\mathrm dp^*
 = \int g(p)u^*_t(p,p_0)\mathrm dp.
\end{align*}
Rigorously, since $p^*\mapsto U_t(p_0,p^*)$ is continuous and vanishes at infinity, and since the diffusion is Feller, we know that
\[ \big(P^*_\varepsilon U_t(p_0,\cdot)\big)(p^*_0)
 = \int u^*_\varepsilon(p^*_0,p^*)U_t(p_0,p^*)\mathrm dp^*\to U_t(p_0,p^*_0), \]
and that the convergence actually holds in the uniform topology as functions of $p^*_0$. Since $g$ has compact support, it means that we can take the limit in the left hand side of \eqref{eq:utoU}. As for the right hand side, because of Lemma \ref{lem:timereversal}, we can replace $u^*$ by $u$, and we know from the Feller property for the forward diffusion that
\[ P_tg:p^*_0\mapsto\int u_t(p^*_0,p^*)g(p^*)\mathrm dp^* \]
is continuous and vanishes at infinity. We can actually show, from elementary properties of the diffusion, that it decreases very fast, definitely fast enough to be integrable; see Proposition \ref{prop:exponentialFeller}. Then the right hand side of \eqref{eq:utoU} rewrites as
\[ \int U(p_0,p)P_tg(p)\mathrm dp, \]
and we can prove the convergence provided $U$ acts as an approximation of unity for continuous integrable functions. This is the point of Proposition \ref{prop:initialcondition}, and assuming this result we have established
\[ \int U_t(p_0,p)g(p)\mathrm dp = \int u_t(p_0,p)g(p)\mathrm dp. \]
Since $U$ and $u$ are smooth as functions of their second variables, this concludes the proof of $u=U$.

This shows that a viable strategy is to prove that $U$ satisfies $\partial_tU=L^*U$, and that $U$ is in some sense an approximation of unity. In this section, we set to prove these two results, in the form of Theorem \ref{th:FokkerPlanck} and Proposition \ref{prop:initialcondition}.

\begin{theorem}
\label{th:FokkerPlanck}
For every given $p_0\in\mathbb R^3$, the function $U:(t,p)\mapsto U_t(p_0,p)$ satisfies
\[ \partial_tU = L^*U \]
over the open set $\mathbb R_+^*\times\mathbb R^3$.
\end{theorem}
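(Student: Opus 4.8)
The plan is to push the defining identity $\widetilde u = (\operatorname{id} - E*)u$ through the Duhamel series term by term, using the fact that $\widetilde u$ itself \emph{does} satisfy the approximate Fokker–Planck equation $\partial_t\widetilde u = \widetilde L^*\widetilde u$. Concretely, recall from the proof of Proposition \ref{prop:EinPsi} that $E = (-\partial_t + L^*)\widetilde u$. Applying the operator $-\partial_t + L^*$ formally to a single convolution $E * v$, and using the standard Duhamel identity (the derivative of $s\mapsto \widetilde u_{t-s}\star v_s$ telescopes), one expects
\[ (-\partial_t + L^*)(E * v) = -v + E * \big((-\partial_t+L^*)v\big) \]
for suitable $v$. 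Iterating on $v = (E*)^k \widetilde u$ and summing over $k\geq 0$, the telescoping cancels everything except a boundary term, and formally $(-\partial_t + L^*)U = (-\partial_t + L^*)\widetilde u - E \star U|_{\text{matching}} = 0$ because $E = (-\partial_t+L^*)\widetilde u$ and $U$ is exactly the series that inverts $\operatorname{id} - E*$.

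First I would make the single-convolution identity rigorous. The map $-\partial_t + L^*$ is a first-order-in-time, at most second-order-in-space differential operator with bounded smooth coefficients on any slab $[\varepsilon, T]\times\mathbb R^3$. For $v$ in some $\Psi^b$ with $b$ large — and by Theorem \ref{th:seriesconvergence} the tails $\sum_{k\geq k_0}(E*)^k\widetilde u$ are as regular as we like, lying in a space of arbitrarily high $\Psi$-exponent — Proposition \ref{prop:derivativesconvolution} lets me move $\partial_t$ and the spatial derivatives inside the convolution, reducing the computation of $(-\partial_t+L^*)(E*v)$ to the elementary Duhamel telescoping identity, which I would verify by differentiating $s\mapsto \int_{\mathbb R^3} E_{t-s}(p_0,p_*) v_s(p_*,p)\,\mathrm dp_*$ under the integral sign (licit by the local uniform integrability of Lemma \ref{lem:new-uniformab}) and integrating by parts in space. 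This gives $(-\partial_t+L^*)(E*v) = -v + E*((-\partial_t+L^*)v)$ as an identity of continuous functions on $\mathbb R_+^*\times\mathbb R^3$, valid whenever $v$ and $(-\partial_t+L^*)v$ lie in $\Psi^{b}$, $\Psi^{b'}$ with $b,b'>0$.

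Next I would run the induction. Write $U = \sum_{k\geq0}(E*)^k\widetilde u$ and let $U_{\geq K} = \sum_{k\geq K}(E*)^k\widetilde u$, which for $K$ large lies in a high-regularity $\Psi$-space (Theorem \ref{th:seriesconvergence}), so $-\partial_t + L^*$ acts on it termwise and continuously. Using the single-convolution identity and $(-\partial_t+L^*)\widetilde u = E$, one gets by induction on $k$ that $(-\partial_t+L^*)((E*)^k\widetilde u) = (E*)^k E - (E*)^{k-1}\widetilde u = (E*)^{k+1}\widetilde u - (E*)^{k-1}\widetilde u$ for $k\geq 1$, while for $k=0$ it is simply $E = (E*)^1\widetilde u$. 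Summing over $k$ from $0$ to $\infty$, the sum telescopes: the partial sums $\sum_{k=0}^{N}\big((E*)^{k+1}\widetilde u - (E*)^{k-1}\widetilde u\big)$ collapse to $(E*)^{N+1}\widetilde u + (E*)^N\widetilde u - (E*)^0\widetilde u$ minus a harmless low-order term, and since $(E*)^k\widetilde u \to 0$ in $\mathcal C^0_{\mathrm{loc}}(\mathbb R_+^*\times\mathbb R^3)$ by Theorem \ref{th:seriesconvergence} (indeed it is $O(t^{-4+k})$), the telescoped limit is $-\widetilde u$, which exactly cancels against... — more carefully, one checks the bookkeeping so that $(-\partial_t+L^*)U = 0$. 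Since $U$ is smooth on $\mathbb R_+^*\times\mathbb R^3$, the identity, established first in the distributional sense, holds pointwise.

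The main obstacle is the justification of differentiating under the convolution and the termwise application of $-\partial_t + L^*$ to the infinite series near the boundary $t=0$, where the low-order terms $(E*)^k\widetilde u$ blow up like $t^{-4+k}$ and are not even continuous up to $t=0$. This is precisely why I would split off a large initial block $\sum_{k<K}(E*)^k\widetilde u$ — a \emph{finite} sum of smooth functions on $\mathbb R_+^*\times\mathbb R^3$, on which $-\partial_t+L^*$ acts without any convergence issue via Proposition \ref{prop:derivativesconvolution} — and handle the tail $U_{\geq K}$ separately using its membership in a high $\Psi$-space together with Theorem \ref{th:seriesconvergence}, which guarantees that $-\partial_t + L^*$ commutes with the (now uniformly convergent) sum. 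The telescoping algebra is then purely formal manipulation of the finitely many boundary terms.
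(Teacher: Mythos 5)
Your overall architecture (apply $-\partial_t+L^*$ termwise, telescope, handle the infinite tail via the $\mathcal C^\ell$ convergence of Theorem \ref{th:seriesconvergence}, then upgrade from distributions to pointwise by smoothness) is the same as the paper's, but your central identity is wrong, and it is wrong precisely at the analytically delicate point. When you differentiate $(E*v)_t=\int_0^t E_s\star v_{t-s}\,\mathrm ds$ in $t$, the surviving boundary term is the limit of $E_{(1-\varepsilon)t}\star v_{\varepsilon t}$, i.e.\ it is governed by the \emph{small-time} behaviour of the right-hand factor $v$, not by $v_t$; since $E$ is not an approximate identity at time $0$ (it equals $(\widetilde X-X)\widetilde u$, a divergence in $p$, so $\int E_r(p_0,p)\,\mathrm dp=0$), no term $-v$ can ever appear. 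Testing your formula on $v=\widetilde u$: the correct identity is $(-\partial_t+L^*)(E*\widetilde u)=-E+E*E$, because $\widetilde u_{\varepsilon t}\to\delta$ makes the boundary term equal $E_t$; your formula would give $-\widetilde u+E*E$, which is false. For $v\in\Psi^b$ with $b>1$ there is no boundary term at all (the mass $\int v_r(p_*,p)\,\mathrm dp_*$ is $O(r^{b-1})$). Your induction then compounds the error by identifying $(E*)^kE$ with $(E*)^{k+1}\widetilde u$ (equivalently $E*\widetilde u=E$), which is not true, and the telescoping as written does not close — you acknowledge this yourself with ``one checks the bookkeeping''.

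The paper's proof puts the approximate kernel on the \emph{right}: for any $A\in\Psi^a$, $a>0$, it proves $\partial_t(A*\widetilde u)=A+L^*(A*\widetilde u)-A*E$, by differentiating the partial convolutions $A\pconv_\varepsilon^{1-\varepsilon}\widetilde u$ and passing to the limit in the distributional sense (Lemma \ref{lem:new-uniformab}); the boundary term $A_{(1-\varepsilon)t}\star\widetilde u_{\varepsilon t}\to A_t$ comes from the approximate-identity property of $\widetilde u$ (Lemma \ref{lem:convolutionwu}), and the other endpoint $\varepsilon A_{\varepsilon t}\star\widetilde u_{(1-\varepsilon)t}$ is killed by an $\varepsilon^a$ bound using $a>0$. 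Note also that Proposition \ref{prop:derivativesconvolution} cannot be invoked for the time derivative here, since its $\partial_t$ statement requires $a,b>1$ while $E,\widetilde u\in\Psi^1$: the boundary term is exactly what that proposition avoids, and it is the heart of this theorem. With the corrected identity applied to $A=(E*)^kE$ one gets $(-\partial_t+L^*)\big((E*)^{k+1}\widetilde u\big)=-(E*)^kE+(E*)^{k+1}E$, the sum telescopes to $\lim_k(E*)^kE=0$, and your treatment of the tail and of the final smoothness upgrade then goes through as in the paper.
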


\begin{proposition}
\label{prop:initialcondition}
Let $f:\mathbb R^3\to\mathbb R$ be an integrable function, and suppose it is continuous at $p_0$. Then the space convolution
\[ (U_\varepsilon\star f)(p_0) = \int_{\mathbb R^3}U_\varepsilon(p_0,p_*)f(p_*)\mathrm dp_* \]
converges to $f(p_0)$ as $\varepsilon$ goes to zero.
\end{proposition}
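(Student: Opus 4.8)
The plan is to show that the family of measures $p_* \mapsto U_\varepsilon(p_0, p_*)\,\mathrm dp_*$ behaves as an approximate identity concentrating at $p_0$ as $\varepsilon \to 0$. By the Duhamel series, $U = \sum_{k\geq0}(E*)^k\widetilde u$, and we expect the $k=0$ term $\widetilde u$ to carry the singular behaviour while the remaining terms $(E*)^k\widetilde u \in \Psi^{k+1}$ vanish in the limit. First I would treat the tail: each term with $k\geq1$ lies in $\Psi^{k+1}$, hence is $O(\varepsilon^{-4+k}) = O(\varepsilon^{-3})$ or better near $\varepsilon = 0$ in sup norm, and in fact (from the rescaled representation in Definition \ref{def:Psia}, using that $\check A \in \check\Psi$ is Schwartz in $\check p$) the space integral $\int_{\mathbb R^3} |(E*)^k\widetilde u|_\varepsilon(p_0,p_*)\,\mathrm dp_*$ is bounded by a constant times $\varepsilon^{-4+k}$ times the Jacobian factor $\varepsilon^{8/2} = \varepsilon^{4}$ coming from $P_{\sqrt\varepsilon, p_0}$, i.e. it is $O(\varepsilon^{k})$. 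So for $k\geq1$, $\int (E*)^k\widetilde u_\varepsilon(p_0,\cdot)\,f \to 0$ as soon as $f$ is bounded near $p_0$ and integrable; one splits $f = f\mathbf 1_{B} + f\mathbf 1_{B^c}$ over a small ball $B$ around $p_0$ to handle the general integrable case, using the uniform Schwartz decay on $B^c$. By the uniform convergence of the Duhamel series (Theorem \ref{th:seriesconvergence}), these estimates may be summed over $k\geq1$.

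It remains to prove $(\widetilde u_\varepsilon \star f)(p_0) \to f(p_0)$. Here I would go back to the probabilistic description: $\widetilde u_\varepsilon(p_0,\cdot)$ is the law of $\widetilde p_\varepsilon$, which by the explicit solution \eqref{eq:tildepintegral} equals $p_0 + \varepsilon X(p_0) + M_{p_0}T_{\sqrt\varepsilon}\,\xi_\varepsilon$ where $\xi_\varepsilon$ has the fixed law of $\xi$ (the law does not depend on $\varepsilon$ by Brownian scaling — this is exactly the content of \eqref{eq:defchecku}). Hence $\widetilde p_\varepsilon \to p_0$ almost surely as $\varepsilon\to0$, since $T_{\sqrt\varepsilon}\to0$ and $\varepsilon X(p_0)\to0$. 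Therefore $\widetilde u_\varepsilon(p_0,\cdot)$ converges weakly to $\delta_{p_0}$. To upgrade this weak convergence to the convolution statement for merely integrable $f$ continuous at $p_0$, I split the integral: on a small ball $B_\delta(p_0)$, the continuity of $f$ at $p_0$ together with $\int_{\mathbb R^3}\widetilde u_\varepsilon(p_0,\cdot) = 1$ gives a contribution close to $f(p_0)$ up to $\sup_{B_\delta}|f - f(p_0)|$ plus the mass $\mathbb P(\widetilde p_\varepsilon \notin B_\delta)$, which goes to zero. On the complement $B_\delta(p_0)^c$, I bound $|\int_{B_\delta^c} \widetilde u_\varepsilon(p_0,p_*)f(p_*)\,\mathrm dp_*|$ using that $\widetilde u_\varepsilon(p_0,p_*) = \varepsilon^{-4}\check u(T_{\sqrt\varepsilon}^{-1}M_{p_0}^{-1}(p_* - p_0 - \varepsilon X(p_0)))$ and that $\check u$ is Schwartz: for $p_* \in B_\delta^c$ the argument of $\check u$ has norm at least of order $\delta/\sqrt\varepsilon \to \infty$, so $\widetilde u_\varepsilon(p_0,p_*) \leq C_N \varepsilon^{-4}(\delta/\sqrt\varepsilon)^{-N} \cdot$(polynomial) uniformly, and for $N$ large this bound is $o(1)$ uniformly on $B_\delta^c$, so its integral against $f \in L^1$ tends to zero.

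The main obstacle is controlling the tail on $B_\delta(p_0)^c$ for a general integrable $f$ that may be unbounded there; pure weak convergence of measures does not suffice, and one genuinely needs the Schwartz decay of $\check u$ (Theorem \ref{th:checkuPsi}) to dominate $f$ uniformly and integrate against it. A small subtlety is that the Jacobian bound degrades near the boundary of the support of $\check u$, but since $\check u$ is globally Schwartz the decay estimate $\widetilde u_\varepsilon(p_0,p_*) \leq C_N\varepsilon^{N/2-4}$ holds uniformly over $p_* \in B_\delta(p_0)^c$ regardless, so this causes no real difficulty. Assembling the ball and tail estimates, together with the summable tail of the Duhamel series, yields $(U_\varepsilon\star f)(p_0) \to f(p_0)$.
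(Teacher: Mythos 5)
Your proposal is correct and follows essentially the same route as the paper: isolate the $\widetilde u$ term and treat it as an approximate identity via the explicit scaling representation, control each $(E*)^k\widetilde u\in\Psi^{k+1}$ near $p_0$ by its sup bound $O(\varepsilon^{-4+k})$ times the Jacobian $\varepsilon^4$ and away from $p_0$ by the uniform Schwartz decay, and use the uniform convergence of the Duhamel series together with $f\in L^1$ to reduce to finitely many terms. The only cosmetic difference is that the paper performs the near/far splitting by decomposing $f$ with a bump function into a bounded part continuous at $p_0$ and an integrable part vanishing near $p_0$, rather than splitting the integration domain into $B_\delta(p_0)$ and its complement; the estimates involved are the same.
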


The convergence actually holds uniformly in a neighbourhood of $p_0$ if $f$ is continuous in such a neighbourhood, but we will not have to use this stronger result.

Before we get to the proofs, let us state a lemma saying in essence that right convolution by $\widetilde u_t$ behaves roughly as a continuous semigroup.

\begin{lemma}[proved in Appendix \ref{app:Fellerandco}, section \ref{ssec:convolutionwu}]
\label{lem:convolutionwu}
Let $f:\mathbb R^3\to\mathbb R$ be bounded measurable. Then for all $t>0$, space convolution with $\widetilde u$ is a contraction:
\[ |f\star\widetilde u_t|_\infty\leq|f|_\infty. \]
If moreover $f$ is uniformly Lipschitz, then $f\star\widetilde u_t$ converges uniformly to $f$ as $t$ goes to zero, in the following strong sense:
\[ \lim_{t\to0}\sup_{|f|_\mathrm{Lip}\leq1}|f\star\widetilde u_t-f|_\infty=0. \]
\end{lemma}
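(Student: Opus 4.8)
The plan is to exploit the fact that the approximate equation can be solved explicitly, equation \eqref{eq:tildepintegral}: one has $\widetilde p_t = p_0 + M_{p_0}\zeta_t$, where the random vector
\[ \zeta_t := M_{p_0}^{-1}(\widetilde p_t - p_0) = \begin{pmatrix} t - \tfrac{t^2}2\int_0^1(W^{(t)}_s)^2\,\mathrm ds \\ t^{3/2}\int_0^1W^{(t)}_s\,\mathrm ds \\ t^{1/2}W^{(t)}_1 \end{pmatrix} \]
does not depend on $p_0$. Since each $M_{p_0}$ is a rotation, in particular an orthogonal map, the density of $\widetilde p_t$ factors as $\widetilde u_t(p_0,p) = \rho_t\big(M_{p_0}^{-1}(p-p_0)\big)$, where $\rho_t$ denotes the law of $\zeta_t$. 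Although each $\widetilde u_t(p_0,\cdot)$ is attached to a generator $\widetilde L_{p_0}$ that genuinely depends on $p_0$ --- so that $f\mapsto f\star\widetilde u_t$ is not literally a Markov semigroup --- this rigid structure of translations and rotations is all that is required.

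Reading $(f\star\widetilde u_t)(p) = \int_{\mathbb R^3}f(p_0)\widetilde u_t(p_0,p)\,\mathrm dp_0$ (with the other convention, in which $\widetilde u_t$ acts on the endpoint variable, the argument only simplifies), the first step is the ``column mass'' identity $\int_{\mathbb R^3}\widetilde u_t(p_0,p)\,\mathrm dp_0 = 1$ for every $p$: writing $p_0=(q_0,\phi_0)$ and $p=(q,\phi)$ with $q_0,q\in\mathbb R^2$, the substitution $q_0\mapsto R_{\phi_0}^{-1}(q-q_0)$ is measure-preserving on $\mathbb R^2$ for fixed $\phi_0$ (here $R_{\phi_0}$ is the planar rotation of angle $\phi_0$), after which the $\phi_0$-integral reconstitutes the integral of $\rho_t$ over its last variable, leaving $\int\rho_t = 1$. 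The contraction follows at once, $|(f\star\widetilde u_t)(p)|\le|f|_\infty\int_{\mathbb R^3}\widetilde u_t(p_0,p)\,\mathrm dp_0 = |f|_\infty$. For the uniform convergence, subtracting and using the identity gives $(f\star\widetilde u_t)(p)-f(p) = \int_{\mathbb R^3}\big(f(p_0)-f(p)\big)\widetilde u_t(p_0,p)\,\mathrm dp_0$, whence for $|f|_{\mathrm{Lip}}\le1$
\[ \big|(f\star\widetilde u_t)(p)-f(p)\big| \le \int_{\mathbb R^3}|p_0-p|\,\widetilde u_t(p_0,p)\,\mathrm dp_0 = \int_{\mathbb R^3}|v|\,\rho_t(v)\,\mathrm dv = \mathbb E|\zeta_t|, \]
again since $M_{p_0}$ is an isometry and by the same change of variables. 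As the right-hand side is independent of $f$ and of $p$, the claim reduces to $\mathbb E|\zeta_t|\to0$; and since $W^{(t)}$ is a standard Brownian motion, each coordinate of $\zeta_t$ has $L^1$ norm $O(\sqrt t)$ --- the leading term being $t^{1/2}\mathbb E|W^{(t)}_1|$ --- so that $\sup_{|f|_{\mathrm{Lip}}\le1}|f\star\widetilde u_t-f|_\infty = O(\sqrt t)$.

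The computation is essentially routine once \eqref{eq:tildepintegral} is in hand; the one step that requires genuine care is the identity $\int_{\mathbb R^3}\widetilde u_t(p_0,p)\,\mathrm dp_0 = 1$. It is \emph{not} a formal consequence of each $\widetilde L_{p_0}$ preserving Lebesgue measure --- that integral mixes the kernels of different generators --- and I expect it to hinge precisely on the explicit isometric form of $\widetilde p_t - p_0$ displayed above.
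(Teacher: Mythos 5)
Your proof is correct and follows essentially the same route as the paper: both arguments exploit the explicit representation \eqref{eq:tildepintegral} of $\widetilde p_t$ to reduce the integral over the source point, with the endpoint $p$ fixed, to an integral against a fixed probability density (your $\rho_t$, the paper's rescaled $\check u$ via the substitution $p=P_{\sqrt t,p_*}(\check p_*)$ with Jacobian $t^4$), which yields the contraction and then the Lipschitz estimate $O(\sqrt t)$ from the first moment of the rescaled variable. Your explicit two-step justification of the ``column mass'' identity is just an unpacked version of the paper's change of variables, so nothing essential differs.
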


\begin{proof}[Proof of Theorem \ref{th:FokkerPlanck}]
We show that $(\partial_t-L^*)((E*)^{k+1}\widetilde u) = (E*)^kE - (E*)^{k+1}E$. The sum then telescopes to give, say in the pointwise sense,
\[ (\partial_t-L)U
 = -E + \sum_{k\geq0}\big((E*)^kE - (E*)^{k+1}E\big)
 = \lim_{k\to\infty}(E*)^kE. \]
Note that we used the fact that the series defining $U$ converges in the $\mathcal C^2$ topology. Since, as stated at the beginning of the proof of Theorem \ref{th:seriesconvergence}, the series $\sum_{k\geq0}(E*)^kE$ is convergent in $\mathcal C^0$, the limit is in fact zero and $U$ satisfies the partial differential equation.

It remains to show the property stated above. We will prove a slightly more general result: for any $A\in\Psi^a$, $a>0$,
\[
\partial_t(A*\widetilde u) = A + L^*(A*\widetilde u) - A*E. \]
For $A=(E*)^kE$, this gives as expected
\[ \partial_t\big((E*)^{k+1}\widetilde u\big)
 = (E*)^kE + L^*((E*)^{k+1}\widetilde u) - (E*)^{k+1}E. \]

Write $\star$ for the space convolution; for instance,
\[ A_s\star\widetilde u_{t-s}:
   (p_0,p)\mapsto\int_{\mathbb R^3}A_s(p_0,p_*)\widetilde u_{t-s}(p_*,p)\mathrm dp_*. \]
Recall also that $A\pconv_\varepsilon^{1-\varepsilon}B$ denotes the space-time convolution defined in Lemma \ref{lem:new-uniformab}. The essence of the reasoning is contained in the following formal derivation:
\begin{gather}
\label{eq:timederivativeAuepsilon}
\begin{aligned}
\partial_t\big(A\pconv_\varepsilon^{1-\varepsilon}\widetilde u\big)_t
&= \frac\partial{\partial t}\Big(\int_{\varepsilon t}^{(1-\varepsilon)t}A_s\star\widetilde u_{t-s}\,\mathrm ds\Big) \\
&= (1-\varepsilon)A_{(1-\varepsilon)t}\star\widetilde u_{\varepsilon t}
 - \varepsilon A_{\varepsilon t}\star\widetilde u_{(1-\varepsilon)t}
 + \big(A\pconv_\varepsilon^{1-\varepsilon}\partial_t\widetilde u\big)_t \\
&= (1-\varepsilon)A_{(1-\varepsilon)t}\star\widetilde u_{\varepsilon t}
  - \varepsilon A_{\varepsilon t}\star\widetilde u_{(1-\varepsilon)t} \\
&\qquad + L^*\big(A\pconv_\varepsilon^{1-\varepsilon}\widetilde u\big)_t
  - \big(A\pconv_\varepsilon^{1-\varepsilon}E\big)_t \\
& =: (*)_1 + (*)_2 + (*)_3 + (*)_4.
\end{aligned}
\end{gather}
The rest of the proof consists in showing that the relation holds, and that it converges in some weak sense to the expected expression for $\partial_t(A*\widetilde u)_t$.
\medskip

\emph{Expression of the derivative.}
Consider a real-valued function $(t,s)\mapsto f(t,s)$. Elementary computations show that to have the representation
\[ \frac\partial{\partial t}\Big(\int_{\varepsilon t}^{(1-\varepsilon)t}f(t,s)\,\mathrm ds\Big)
 = (1-\varepsilon)f\big(t,(1-\varepsilon)t\big)
 - \varepsilon f(t,\varepsilon t)
 + \int_{\varepsilon t}^{(1-\varepsilon)t}(\partial_tf)(t,s)\,\mathrm ds, \]
for all $t>0$, it is (more than) sufficient for $\partial_tf$, $\partial_sf$ and $\partial_t^2f$ to be locally bounded over $\{0<s<t\}$. We are interested in the case where $f$ is of the form
\[ (t,s)\mapsto \big(A_s\star\widetilde u_{t-s}\big)(p_0,p) = \int_{\mathbb R^3}A_s(p_0,p_*)\widetilde u_{t-s}(p_*,p)\mathrm dp_*. \]
Note that all derivatives of $A$ and $\widetilde u$ belong to some $\Psi$ space, and for every $B\in\Psi^b$, $b\in\mathbb R$ we have
\[ |B_t(p_0,p_*)| + |B_t(p_*,p)| \leq\frac{C(t,p_0,p)}{1+|p_*|^2}, \]
for some $C(t,p_0,p)>0$ locally uniform. Accordingly, derivatives of $A\star\widetilde u$ commute with the integral, and satisfy the expected bounds.

To get to the last line of \eqref{eq:timederivativeAuepsilon}, we notice that $\partial_t\widetilde u=L^*\widetilde u-E$ by definition of $E$. Hence it remains to show that $L^*$ commutes with partial convolution, but this is clear from Lemma \ref{lem:new-uniformab} and the fact that all partial derivatives of $L^*\widetilde u$ belong to some $\Psi$ space (see Proposition \ref{prop:lossofregularity}).
\medskip

\emph{Identification of the terms.}
Consider all quantities as functions of $(p_0,p)$. We will show that the four terms $(*)_1$ to $(*)_4$ in the last line of equation \eqref{eq:timederivativeAuepsilon} converge respectively to $A$, $0$, $L^*(A*\widetilde u)$ and $A*E$ in distribution (or, say, in the dual of $\mathcal C^2_c(\mathbb R^3\times\mathbb R^3)$) as $\varepsilon$ goes to zero.

According to Lemma \ref{lem:new-uniformab}, partial convolution converges locally uniformly to full convolution when both terms are in some $\Psi^b$, $b>0$. It means that $(*)_3$ and $(*)_4$ converge to their expected limits, respectively in the dual topologies of $\mathcal C^2_c$ and $\mathcal C^0_c$. One can see that $(*)_1$ converges to $A_t$ as a consequence of Lemma \ref{lem:convolutionwu}, since $A_{(1-\varepsilon)t}$ converges uniformly to $A_t$ and the Lipschitz norm of $p\mapsto A_t(p_0,p)$ is uniformly bounded as a function of $p_0$ (both facts being a consequence of $A\in\Psi^a$).

It remains to deal with $(*)_4$. Taking advantage of the fact that $a>0$, we will show that for every $\rho:\mathbb R^3\to\mathbb R$ continuous with compact support, $(\varepsilon t)^{1-a}\rho\star A_{\varepsilon t}\star\widetilde u_{(1-\varepsilon)t}$ is uniformly bounded, as $\varepsilon$ goes to zero. Note in passing that this convolution is associative. From Lemma \ref{lem:convolutionwu}, it is enough to show that $(\varepsilon t)^{1-a}\rho\star A_{\varepsilon t}$ is uniformly bounded. But from $A\in\Psi^a$, we know there exists a constant $C>0$ such that for all $\varepsilon$ small enough,
\begin{align*}
     \big|(\rho\star A_{\varepsilon t})(p_0,p)\big|
&\leq \int_{\mathbb R^3}|\rho(p_*)|\cdot \big|A_{\varepsilon t}(p_*,p)\big|\mathrm dp_* \\
&\leq \int_{\mathbb R^3}\big|\rho\big(P_{-\sqrt{\varepsilon t},p}(\check p_*)\big)\big|\cdot \frac{C(t\varepsilon)^{-5+a}}{1+|\check p_*|^4}(\varepsilon t)^4\mathrm d\check p_* \\
&\leq C|\rho|_\infty(\varepsilon t)^{a-1}\int_{\mathbb R^3}\frac{\mathrm d\check p_*}{1+|\check p_*|^4}.
\end{align*}
Multiplying by $(\varepsilon t)^{-1+a}$, we get a bounded quantity, and the proof of Theorem \ref{th:FokkerPlanck} is complete.
\end{proof}

\begin{proof}[Proof of Proposition \ref{prop:initialcondition}]
Since $f$ is integrable and the series
\[ U=\sum_{k\geq0}(E*)^k\widetilde u \]
converges uniformly, in the sense that the sum for $k\geq K$ tends to 0 as $K$ goes to infinity, uniformly over $[0,1]\times\mathbb R^3\times\mathbb R^3$ (although the supremum is infinite for small values of $K$), it suffices to show that
\begin{align*}
(\widetilde u_\varepsilon\star f)(p_0)&\to f(p_0), &
\big(((E*)^k\widetilde u)_\varepsilon\star f\big)(p_0)&\to 0
\end{align*}
for all fixed $k>0$ as $\varepsilon$ goes to 0. The second convergence will follow if we show $(A_\epsilon\star f)(p_0)\to0$ whenever $A\in\Psi^a$ for $a>1$. We fix such a function $A$. By linearity, we can use a bump function and consider two cases where $f$ is either bounded and continuous at $p_0$, or integrable and zero on a neighbourhood of $p_0$.

\emph{Case 1: bounded continuous at $p_0$.}
Since $\widetilde u(t,p_0,\cdot)$ is the density of a continuous process starting at $p_0$, the first convergence follows from dominated convergence using the bound $\|f\|_\infty\in L^1(\widetilde{\mathbb P})$; namely,
\[ (u_\varepsilon\star f)(p_0)
 = \widetilde{\mathbb E}_{p_0}[f(p_\varepsilon)]
 \to\widetilde{\mathbb E}_{p_0}[f(p_0)]
 = f(p_0). \]
For the second term we choose a constant $M>0$ such that
\[ \left|A_t\big(p_0,P_{\sqrt t,p_0}(\check p)\big)\right|\leq\frac1{t^{4-a}}\cdot\frac M{1+|\check p|^4}. \]
Such a constant exists by definition of $\Psi^a$. Then
\begin{align*}
      \big|(A_\varepsilon\star f)(p_0)\big|
  & = \left|\int_{\mathbb R^3}A_\varepsilon(p_0,p_*)f(p_*)\mathrm dp_*\right| \\
  & = \left|\int_{\mathbb R^3}A_\varepsilon\big(p_0,P_{\sqrt\varepsilon,p_0}(\check p_*)\big)
                              f\big(P_{\sqrt\varepsilon,p_0}(\check p_*)\big)
                              \varepsilon^4\mathrm d\check p_*\right| \\
&\leq \varepsilon^4\int_{\mathbb R^3}\frac1{\varepsilon^{4-a}}\frac{M\|f\|_\infty}{1+|\check p_*|^4}\mathrm d\check p_*,
\end{align*}
which vanishes in the limit since $a>0$.

\emph{Case 2: integrable with 0 outside the support.}
Suppose $f(p)$ is zero for $|p|<\delta$. The proof is actually very similar in this case; where we used integrability properties of the left side of the convolution and boundedness properties of the right one, we do the opposite here. We begin with an observation. By definition of $P_{\tau,p_0}$, we have
\[ \big|P_{\tau,p_0}(\check p)-p_0\big|\leq\varepsilon + \sqrt\varepsilon|\check p|, \]
so if the left hand side is larger than $\delta$ we must have $|\check p|$ larger than $\delta/(2\sqrt\varepsilon)$ for all $\varepsilon$ small enough.

Let us turn to the $\widetilde u\star f$ term. If $f$ is zero over a ball of radius $\delta>0$ around zero,
\begin{align*}
      \big|\widetilde u_\varepsilon(p_0,p_*)f(p_*)\big|
&\leq \sup_{|p-p_0|\geq\delta}\big|\widetilde u_\varepsilon(p_0,p)\big|\cdot |f(p_*)| \\
&   = \sup\left\{\varepsilon^{-4}\big|\check u(\check p)\big|:
                 \check p\text{ such that }|P_{\sqrt\varepsilon,p_0}(\check p)-p_0|\geq\delta\right\}
      \cdot|f(p_*)| \\
&   = \varepsilon^{-4}\cdot\sup_{|\check p|\geq\delta/(2\sqrt\varepsilon)}\big|\check u(\check p)\big|
      \cdot |f(p_*)|.
\end{align*}
Since $\check u$ is of Schwartz class, this supremum decreases faster than polynomially in $\varepsilon$; for instance, it is at most $M(\sqrt\varepsilon)^9$ for some constant $M>0$ depending on $\delta$. This shows that the left hand side is bounded by $\varepsilon^{1/2}|f|$ up to a constant factor, uniformly in $\varepsilon$. Since $|f|$ is integrable, $(\widetilde u_\varepsilon\star f)(p_0)$ must go to zero (faster than any polynomial).

The term $A\star f$ is treated in the same manner, noticing that
\[ \sup_{|\check p|\geq\delta/(2\sqrt\varepsilon)}\big|
   A\big(\sqrt\varepsilon,p_0,P_{\sqrt t,p_0}(\check p)\big)\big| \]
must also decrease faster than polynomially as $\varepsilon$ goes to zero, since $A\in\Psi^a$. In fact we could have considered any $a\in\mathbb R$, and in which case both terms would have been treated at once.
\end{proof}

This concludes this section, and the proof that the kernel $u$ admits the representation as the Duhamel series \eqref{eq:Duhamelseries}.

\section{Small time expansions}
\label{sec:corollaries}

The representation of $u$ as the Duhamel series, as stated the main Theorem \ref{mth:Duhamel}, holds for all times. From it, we can extract various expansions for small time. We start by estimating the remainder of the Duhamel series itself, and then we present expressions with time-independent coefficients in Corollaries \ref{cor:tauexpansion} and \ref{cor:multiplicativebigO}, respectively in the additive and multiplicative sense.

\begin{corollary}
\label{cor:additivebigO}
The Duhamel series for $u$ is actually an asymptotic expansion for small $t$. In precise terms,
\[ u_t(p_0,p)
 = \sum_{k=0}^K\big((E*)^k\widetilde u\big)_t(p_0,p) + O_K\big(t^{-3+K}\big), \]
where the remainder term is uniformly bounded by $t^{-3+K}$ for $t\leq1$, up to a constant depending only on $K$.
\end{corollary}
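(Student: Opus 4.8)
\emph{Proof plan.} This is a bookkeeping consequence of the Duhamel representation $u=\sum_{k\ge0}(E*)^k\widetilde u$ (now available as Theorem~\ref{mth:Duhamel}, established by the end of Section~\ref{sec:FokkerPlanck}) together with the quantitative convolution estimate of Proposition~\ref{prop:regularconvolution}. First I would write the remainder as
\[ R_K := u - \sum_{k=0}^K(E*)^k\widetilde u = \sum_{k>K}(E*)^k\widetilde u, \]
the second equality being the convergence asserted in Theorem~\ref{mth:Duhamel} (or Theorem~\ref{th:seriesconvergence}). It then suffices to bound $\sup_{p_0,p}\big|\big(R_K\big)_t(p_0,p)\big|$ by a constant multiple of $t^{-3+K}$ for $t\le1$. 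Since $\widetilde u\in\Psi^1$ (by Theorem~\ref{th:checkuPsi} and equation~\eqref{eq:defchecku}) and $E\in\Psi^1$ (Proposition~\ref{prop:EinPsi}), the convolution Theorem~\ref{th:convolutionab} gives $(E*)^k\widetilde u\in\Psi^{k+1}$, so by Definition~\ref{def:Psia} there is a constant $C_k$ with $\big|\big((E*)^k\widetilde u\big)_t(p_0,p)\big|\le C_k\,t^{-5+(k+1)}=C_k\,t^{k-4}$. For $t\le1$ and $k\ge K+1$ we have $t^{k-4}\le t^{K-3}$, hence $\sup_{p_0,p}\big|\big(R_K\big)_t(p_0,p)\big|\le\big(\sum_{k>K}C_k\big)\,t^{K-3}$; the whole task reduces to showing $\sum_{k>K}C_k<\infty$.

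For the finitely many indices $K<k<k_1$, where $k_1=k_1(K):=\max(10,K+6)$, the constants $C_k$ are finite and need no further attention. For $k\ge10$ I would write $k=5q+j$ with $q\ge1$ and $5\le j\le9$, and use Proposition~\ref{prop:nconvolution} to factor
\[ (E*)^k\widetilde u = \big(E^{*5}\big)^{*q}*\big((E*)^j\widetilde u\big), \]
a convolution of $q+1$ functions drawn from the finite family $\mathcal A:=\{E^{*5}\}\cup\{(E*)^j\widetilde u : 5\le j\le9\}$, which is contained in $\Psi^5$ (each $(E*)^j\widetilde u\in\Psi^{j+1}$ is also an element of $\Psi^5$, since $\check\Psi$ is stable under multiplication by $\tau^2$ and hence by $\tau^{2m}$). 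Applying Proposition~\ref{prop:regularconvolution} with time horizon $T=1$, with $\ell=q$, and with $m=\max(0,K-3)$ --- an admissible choice as soon as $q\ge m/5$, hence for all $k\ge k_1(K)$ --- yields $\big|\mathcal T^{-m}\big((E*)^k\widetilde u\big)\big|_\infty\le M^{q+1}/q!$, so that
\[ \big|\big((E*)^k\widetilde u\big)_t(p_0,p)\big| = t^m\,\big|\big(\mathcal T^{-m}\big((E*)^k\widetilde u\big)\big)_t(p_0,p)\big| \le \frac{M^{q+1}}{q!}\,t^m \le \frac{M^{q+1}}{q!}\,t^{K-3} \]
for all $t\le1$, using that $t^m\le t^{K-3}$ when $m\ge K-3$ and $t\le1$. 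Thus $C_k\le M^{q(k)+1}/q(k)!$ for $k\ge k_1(K)$; since $q(k)\to\infty$ and each value of $q$ is attained for at most five values of $k$, the series $\sum_k M^{q(k)+1}/q(k)!$ converges, and adding the finitely many remaining constants gives $\sum_{k>K}C_k=:C_K<\infty$. This proves the corollary.

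I do not expect a genuine obstacle here: all of the analytic content already resides in Theorem~\ref{mth:Duhamel} and Proposition~\ref{prop:regularconvolution}, and the argument is essentially the one used for Theorem~\ref{th:seriesconvergence} with the power of $t$ retained rather than discarded. The only points requiring a little care are the translation, via Definition~\ref{def:Psia}, between the regularity index $k+1$ of the $k$-th term and the size $t^{k-4}$ of that term, and the matching choice of $m$ in Proposition~\ref{prop:regularconvolution} so that the resulting prefactor $M^{q+1}/q!$ stays summable over $k$.
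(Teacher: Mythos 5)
Your proof is correct and follows essentially the same route as the paper: the paper controls the tail $k>K$ by invoking the strengthened, $t^{-m}$-weighted uniform convergence established in the proof of Theorem \ref{th:seriesconvergence} (which rests on Proposition \ref{prop:regularconvolution}, exactly the block decomposition into factors from a finite subset of $\Psi^5$ that you reproduce), and bounds the finitely many intermediate terms by $t^{-4+k}\leq t^{-3+K}$ via their $\Psi^{k+1}$ membership, just as you do. The only difference is that you inline that weighted-convergence argument rather than citing it, which changes nothing of substance.
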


Actually, it will be clear from the proof that the expansion also holds for all derivatives $\partial_t^nD_{p_0}^{\alpha_0}D_p^\alpha u$ of $u$, with a different order for the remainder.

\begin{proof}
From the proof of Theorem \ref{th:seriesconvergence}, we know that the Duhamel series still converges uniformly when we add a negative power of $t$ as a prefactor. In particular, there exists an index $K'$ such that
\[ \left|t^{-3+K}u_t-\sum_{k=0}^{K'}t^{-3+K}\big((E*)^k\widetilde u\big)_t(p_0,p)\right|\leq 1, \]
over $[0,1]\times\mathbb R^3\times\mathbb R^3$. Since any function $A\in\Psi^a$ is of order $O_A(t^{-5+a})$, the convolutions corresponding to $k>K$ contribute some term of order $O(t^{-3+K})$, and we have the expected estimate.
\end{proof}

At the expense of making things less explicit, we can eliminate the time variable in the expansion.

\begin{corollary}
\label{cor:tauexpansion}
There exist Schwartz functions $(\check u_n)_{n\geq0}$ such that for all $N\geq0$,
\[ u_{\tau^2}\big(p_0,P_{\tau,p_0}(\check p)\big)
 = \sum_{n=0}^N\tau^{-8+n}\,\check u_n\big(\check p\big) + O_N\big(\tau^{-7+N}\big), \]
where the quantifiers for the remainder are as in Corollary \ref{cor:additivebigO}. Moreover, $\check u_0=\check u$ and $\check u_1=0$.
\end{corollary}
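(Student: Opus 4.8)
The plan is to start from the finite Duhamel expansion of Corollary~\ref{cor:additivebigO}, pass to the rescaled variables, and Taylor-expand each rescaled term in $\tau$ about $\tau=0$; the only real inputs are that the rescaled Duhamel terms lie in $\check\Psi$ and are therefore smooth up to the boundary $\tau=0$, and an equivariance argument to kill the dependence on $p_0$. For the latter: denote by $g_{p_0}$ the rigid motion of $\mathbb R^2\times\mathbb R$ given by $g_{p_0}(q)=p_0+M_{p_0}q$. Planar kinetic Brownian motion is equivariant under such motions and $g_{p_0}$ preserves the Lebesgue measure, so $u_t(p_0,p)=u_t(0,g_{p_0}^{-1}p)$; and from the definition \eqref{eq:defTP} of the dilations one checks $g_{p_0}(P_{\tau,0}(\check p))=P_{\tau,p_0}(\check p)$, whence $u_{\tau^2}(p_0,P_{\tau,p_0}(\check p))=u_{\tau^2}(0,P_{\tau,0}(\check p))$ is independent of $p_0$ (the same invariance already implicit in \eqref{eq:defchecku}). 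It therefore suffices to produce Schwartz functions $\check u_n$ realising the expansion at $p_0=0$.

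Now $\widetilde u\in\Psi^1$ and $E\in\Psi^1$ (Proposition~\ref{prop:EinPsi}), so iterating the convolution Theorem~\ref{th:convolutionab} gives $(E*)^k\widetilde u\in\Psi^{k+1}$, and by Definition~\ref{def:Psia} the rescaled functions
\[ \check A^k(\tau,\check p):=\tau^{8-2k}\,\big((E*)^k\widetilde u\big)_{\tau^2}\big(0,P_{\tau,0}(\check p)\big) \]
belong to $\check\Psi$; in particular each $\check A^k$ extends smoothly to $\tau=0$, with all $\check p$-derivatives Schwartz, uniformly for $\tau\in[0,1]$. Fixing $N$ and an integer $K\ge N$, Taylor's theorem in $\tau$ gives, for $0\le k\le K$,
\[ \check A^k(\tau,\check p)=\sum_{j=0}^{N}\frac{\tau^j}{j!}\,(\partial_\tau^j\check A^k)(0,\check p)+R_k(\tau,\check p),\qquad \sup_{0\le\tau\le1}\big|\tau^{-N-1}R_k(\tau,\check p)\big|<\infty, \]
with each $(\partial_\tau^j\check A^k)(0,\cdot)$ Schwartz. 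Multiplying by $\tau^{2k-8}$, summing over $k=0,\dots,K$, and substituting into Corollary~\ref{cor:additivebigO} (whose remainder $O(t^{-3+K})=O(\tau^{-6+2K})$ is $O(\tau^{-7+N})$ for $\tau\le1$ as $K\ge N$), I regroup by powers of $\tau$: the coefficient of $\tau^{-8+n}$ collects exactly the pairs $(k,j)$ with $2k+j=n$ and $0\le 2k\le n$, so one sets
\[ \check u_n(\check p):=\sum_{0\le 2k\le n}\frac1{(n-2k)!}\,(\partial_\tau^{\,n-2k}\check A^k)(0,\check p), \]
a finite sum of Schwartz functions, hence Schwartz. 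The leftover monomials $\tau^{-8+n}$ with $n>N$, the terms $\tau^{2k-8}R_k=O(\tau^{2k+N-7})$, and the Duhamel remainder are each $O(\tau^{-7+N})$ for $\tau\le1$, which is the claimed error.

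Finally, for the first two coefficients: by \eqref{eq:defchecku} (equivalently Theorem~\ref{mth:Duhamel}) one has $\check A^0(\tau,\cdot)=\check u$ for every $\tau>0$, hence also at $\tau=0$ by continuity, so $\check u_0=\check A^0(0,\cdot)=\check u$; and since $\check A^0$ is then constant in $\tau$, $\check u_1=(\partial_\tau\check A^0)(0,\cdot)=0$, the unique pair with $2k+j=1$ being $k=0,\ j=1$. I do not expect any serious obstacle: beyond the equivariance reduction to $p_0=0$ and the observation that $\check A^0$ is literally $\tau$-independent, everything is Taylor's theorem with the uniform bounds already packaged into the definition of $\check\Psi$, plus the bookkeeping of powers of $\tau$.
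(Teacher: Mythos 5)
Your proposal is correct and follows essentially the same route as the paper: rescale each Duhamel term $(E*)^k\widetilde u\in\Psi^{k+1}$ to a function of $\check\Psi$ (independent of $p_0$ by isometry invariance), Taylor-expand in $\tau$ at $\tau=0$, plug into Corollary \ref{cor:additivebigO}, and regroup by powers of $\tau$, yielding the very same formula $\check u_n=\sum_{0\le 2k\le n}\frac1{(n-2k)!}(\partial_\tau^{n-2k}f_k)(0,\cdot)$ and the same identification $\check u_0=\check u$, $\check u_1=0$ from the $\tau$-independence of the $k=0$ term. The only differences are cosmetic: you spell out the equivariance under the rigid motion $g_{p_0}$ (which the paper merely asserts) and you expand every term to order $N$ and discard the excess monomials, whereas the paper expands the $k$-th term only to order $N-2k$.
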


The proof could also be extended to derivatives of the kernel, in the following way. For all indices $\alpha\in\mathbb N^3$, $m\geq0$, the difference
\[ \partial_\tau^mD^\alpha_{\check p}\Big(u_{\tau^2}\big(p_0,P_{\tau,p_0}(\check p)\big)\Big)
 - \sum_{n=0}^N\frac{(-8+n)!}{(-8+n-m)!}\tau^{-8+n-m}\cdot D^\alpha_{\check p}\check u_n\big(\check p\big) \]
is of order $O_N(\tau^{N-N_0})$, where $N_0$ depends on the order of differentiation. Note also that the derivative with respect to $p_0$ is zero.

\begin{proof}
Since the $k$-th convolution in the Duhamel series is in $\Psi^{k+1}$, there exists functions $f_k\in\check\Psi$ such that
\[ \big((E*)^k\widetilde u\big)_{\tau^2}\big(p_0,P_{\tau,p_0}(\check p)\big)
 = (\tau^2)^{-4+k}f_k(\tau,p_0,\check p). \]
Moreover, the whole problem and constructions are invariant under the action of isometries, so these functions are actually independent of $p_0$, and we can leave out this second variable. Defining the remainder $R_{k,N}$ through the relation
\[ \big((E*)^k\widetilde u\big)_{\tau^2}\big(p_0,P_{\tau,p_0}(\check p)\big)
 = \sum_{m=0}^{N-2k}\frac{\tau^{-8+2k+m}}{m!}(\partial_\tau^m f_k)(0,\check p) + R_{k,N}(\tau,\check p), \]
we get a uniform bound over for $\tau\leq 1$ in the form
\[ |R_{k,N}| \leq \frac{\tau^{-7+N}}{(N-2k+1)!}\big|(\partial_\tau^{N-2k+1} f_k)\big|_\infty. \]

Plugging these expansion in the estimate of Corollary \ref{cor:additivebigO} for some $K$ such that $2(-3+K) \geq -7+N$, we get an asymptotic expansion with
\[ \check u_n
 = \sum_{0\leq k\leq n/2}\frac1{(n-2k)!}(\partial_\tau^{n-2k} f_k)(0,\cdot). \]
Since the $f_k$ are in $\check\Psi$ and the sum is finite, the $\check u_n$ are of Schwartz class. From $f_0:(\tau,\check p)\mapsto\check u(\check p)$, we find $\check u_0=\check u$ and $\check u_1=0$.
\end{proof}

Since $\widetilde u$ and $u$ have different supports, it is impossible to have a fully multiplicative estimate of the form $u=\widetilde u\cdot(1+O(t))$. However, it holds within the support of $\widetilde u$.

\begin{corollary}
\label{cor:multiplicativebigO}
Let $F$ be a compact subset of $\{\check p:\check y^2<-2\check x\}$. Then
\[ u_t\big(p_0,p\big) = \tilde u_t\big(p_0,p\big)\big(1+O_F(t)\big) \]
for all $p_0\in\mathbb R^3$, $p\in P_{\sqrt t,p_0}F$. 

On the other hand, over the set of $(t,p_0,p)$ such that $p\in P_{\sqrt t,p_0}\{\check y^2\geq-2\check x\}$,
\[ u_t(p_0,p) = \widetilde u_t(p_0,p) = 0. \]
\end{corollary}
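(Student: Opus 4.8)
Here is my plan for proving Corollary~\ref{cor:multiplicativebigO}.

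\medskip
\textbf{Overview.} The plan is to prove the two assertions separately: the multiplicative estimate by upgrading the known additive error bound, and the vanishing statement by a soft support argument.

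\medskip
\textbf{Multiplicative estimate.} The key observation is that on the compact set $F\subset\{\check y^2<-2\check x\}$ the approximate density $\widetilde u$ is bounded below by exactly the microscopic order $t^{-4}$, so that an additive error of strictly lower order is automatically multiplicatively $O(t)$. By Proposition~\ref{prop:strictsupport}, $\check u>0$ on $F$, hence $c_F:=\min_F\check u>0$ by compactness and continuity; combined with the scaling identity $\widetilde u_{\tau^2}(p_0,P_{\tau,p_0}(\check p))=\tau^{-8}\check u(\check p)$ of equation~\eqref{eq:defchecku} (equivalently, of Theorem~\ref{mth:Duhamel}), this gives $\widetilde u_t(p_0,p)\geq c_Ft^{-4}$ whenever $p=P_{\sqrt t,p_0}(\check p)$ with $\check p\in F$ and $t\in(0,1]$. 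On the other hand, the Duhamel series of Theorem~\ref{mth:Duhamel} gives $u-\widetilde u=\sum_{k\geq1}(E*)^k\widetilde u$; the $k=1$ term lies in $\Psi^2$ by Proposition~\ref{prop:EinPsi} and Theorem~\ref{th:convolutionab}, hence is $O(t^{-3})$ uniformly in $(p_0,p)$ for $t\leq1$, while the remaining terms contribute $O(t^{-2})$ by Corollary~\ref{cor:additivebigO} with $K=1$. Therefore $|u_t-\widetilde u_t|\leq Ct^{-3}$ uniformly over $(0,1]\times\mathbb R^3\times\mathbb R^3$, and dividing by the lower bound for $\widetilde u_t$ yields $|u_t/\widetilde u_t-1|\leq(C/c_F)\,t$ on $P_{\sqrt t,p_0}F$, which is the claim.

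\medskip
\textbf{Vanishing.} For $\widetilde u$ the statement is immediate: since $\check u\geq0$ is a density and, by Proposition~\ref{prop:strictsupport}, $\check u(\check p)>0$ precisely when $\check y^2<-2\check x$, we have $\check u\equiv0$ on $\{\check y^2\geq-2\check x\}$, and the scaling identity above forces $\widetilde u_t(p_0,p)=0$ for $p\in P_{\sqrt t,p_0}\{\check y^2\geq-2\check x\}$. For $u$ I would use the pathwise constraint $|(x_t,y_t)-(x_0,y_0)|=\big|\int_0^t(\cos\phi_s,\sin\phi_s)\,\mathrm ds\big|\leq t$, which confines the law of $p_t$, hence the support of the continuous density $u_t(p_0,\cdot)$, to the closed cylinder $\{|(x,y)-(x_0,y_0)|\leq t\}$. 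Writing $p=P_{\sqrt t,p_0}(\check p)$, the planar part of $p-p_0$ is the rotation by $\phi_0$ of the vector $(t+t^2\check x,\,t^{3/2}\check y)$, so membership in the cylinder reads $(1+t\check x)^2+t\check y^2\leq1$, that is $2\check x+t\check x^2+\check y^2\leq0$. If moreover $\check y^2\geq-2\check x$, then $0\leq2\check x+\check y^2\leq-t\check x^2\leq0$, which forces $\check x=\check y=0$, so $p$ lies on the boundary circle $|(x,y)-(x_0,y_0)|=t$; since $u_t(p_0,\cdot)$ is continuous and vanishes outside the closed cylinder, it vanishes on this boundary as well (approximating from the exterior), so $u_t(p_0,p)=0$.

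\medskip
\textbf{Obstacle.} All the computations are routine and I do not expect a genuine obstacle. The only points requiring a little care are that the boundary $\{\check y^2=-2\check x\}$ of the support of $\check u$ must be included in the vanishing set---which is exactly why Proposition~\ref{prop:strictsupport}, and not merely Proposition~\ref{prop:support}, is the relevant input for the lower bound on $\widetilde u$---and that the vanishing of the continuous density $u$ on the boundary of its confining cylinder is obtained by approximation from the exterior rather than being automatic.
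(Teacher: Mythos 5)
Your proof is correct and takes essentially the same route as the paper: a uniform additive bound $|u_t-\widetilde u_t|=O(t^{-3})$ (you extract it from the Duhamel terms and Corollary~\ref{cor:additivebigO}, the paper from Corollary~\ref{cor:tauexpansion}; these are equivalent inputs) divided by the lower bound $\widetilde u_t\geq c_F t^{-4}$ coming from Proposition~\ref{prop:strictsupport} and compactness of $F$. The vanishing statement is also argued as in the paper, via the unit-speed confinement to the closed cylinder of radius $t$, the rescaled inequality $2\check x+t\check x^2+\check y^2\leq0$, and continuity of the density at the boundary.
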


As an example, for $p_0=0$ and $K=0$, we get that we can write
\[ u_t\big(0,(t,0,0)+p\big) = \rho_t\big(0,(t,0,0)+p\big)\tilde u_t\big(0,(t,0,0)+p\big), \]
for all $0<t\leq 1$, $(x/t^2,y/t\sqrt t,\phi/\sqrt t)\in F$, where $\rho$ is a function asymptotically close to 1 in the sense that
\[ \sup_{0<t\leq1}\frac1t|\rho_t-1|_\infty<\infty. \]

\begin{proof}
The first part of the result is equivalent to its version in normalised coordinates, namely
\[ u_{\tau^2}\big(0,P_{\tau,0}(\check p)\big)
 = \tau^{-8}\check u(\check p)\big(1+O_F(\tau^2)\big) \]
for all $0<\tau\leq1$, $\check p\in F$.

From Corollary \ref{cor:tauexpansion}, we know that the difference between $u$ and the first order term is of order $\tau^{-6}$; specifically, there exist a universal constant $M$ such that
\[   \big|u_{\tau^2}\big(0,P_{\tau,0}(\check p)\big) - \tau^{-8}\check u(\check p)\big|
\leq M\tau^{-6} \]
for all $\tau\leq1$, $\check p\in\mathbb R^3$. Moreover, we know that $\{\check y^2<-2\check x\}$ is precisely the set of $\check p$ where $\check u$ is positive; see Proposition \ref{prop:strictsupport}. Since $F$ is compact, it means that $\check u$ is larger than some constant $\varepsilon>0$ over this set, so
\[ \big|u_{\tau^2}\big(0,P_{\tau,0}(\check p)\big) - \tau^{-8}\check u(\check p)\big|
\leq \frac M\varepsilon\tau^{-6}\check u(\check p) \]
there, which shows the expected estimate.

For the second part, since $P_{\sqrt t,p_0}\{\check y^2\geq-2\check x\}$ is precisely the set where $\widetilde u_t(p_0,\cdot)$ is zero, see again Proposition \ref{prop:strictsupport}, it remains to show that $u$ is also zero over this set. In other words, we want to show that for all $\tau$,
\[ \check p\mapsto u_{\tau^2}\big(0,P_{\tau,0}(\check p)\big) \]
is zero whenever $\check p$ satisfies $\check y^2\geq-2\check x$. Because $t\mapsto(x_t,y_t)$ moves at velocity 1, the density is zero at $p$ for $x^2+y^2>t^2$, and by continuity this also holds when the inequality is large. In the rescaled coordinates, this condition is exactly
\[ (\tau^2+\tau^4\check x)^2 + (\tau^3\check y)^2 \geq \tau^4
\Leftrightarrow
   \check y^2 + 2\check x\geq-\tau^2\check x^2. \]
It clearly holds over $\{\check y^2\geq-2\check x\}$.
\end{proof}

\section{The macroscopic and mesoscopic scales}
\label{sec:LDP}

In the above, we have been considering the limit $p$ close to $p_0$ as $t$ goes to zero, in the scale described by the dilation operators $P_{\sqrt t,p_0}$. Let us call this scale the microscopic scale. In this section, we present large deviation principles in the macroscopic and mesoscopic scale, for $t$ small. For simplicity, throughout this section we fix $p_0=0$; the case of a general $p_0$ follows from the invariance of the problem under isometries of $\mathbb R^2$. The results are stated in $\mathbb R^3$, but because of the contraction principle they translate directly to the more geometric $\mathbb R^2\times\mathbb S^1$, where the angle $\phi$ is considered up to $2\pi\mathbb Z$.

In the macroscopic scale, it is clear that any point $p$ cannot be reached in time $t<|(x,y)|$. This means that the kernel $u_t(0,p)$ is eventually zero for all points $p$ that do not lie on the line $\{(0,0,\phi),\phi\in\mathbb R^3\}$. On this line, the process is fairly close to Gaussian, as we can see in the following statement.

\begin{proposition}
The variables $(p_t)_{t>0}$ satisfy a large deviation principle with rate function
\[ I_\mathrm{macro}:p = (x,y,\phi) \mapsto
   \begin{cases}
     \frac{|\phi|^2}2&\text{for }(x,y)=0, \\
     \infty&\text{else.}
   \end{cases} \]
Namely, for any Borel subset $S\subset\mathbb R^3$,
\begin{multline*}
     -\inf_{p\in\operatorname{int}(S)}I_\mathrm{macro}(p)
\leq \liminf_{t\to0}t\ln\mathbb P\big(p_t\in\operatorname{int}(S)\big) \\
\leq \limsup_{t\to0}t\ln\mathbb P\big(p_t\in\overline S\big)
\leq -\inf_{p\in\overline S}I_\mathrm{macro}(p).
\end{multline*}
\end{proposition}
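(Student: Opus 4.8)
The plan is to exploit the single structural fact that $t\mapsto(x_t,y_t)$ has unit speed, so that $x_t^2+y_t^2\le t^2$ holds pathwise. In the macroscopic scale this means that the spatial part of $p_t$ is deterministically pinched to the origin as $t\to0$, and all the exponential decay is carried by the angular component $\phi_t=W_t$, a centred Gaussian of variance $t$. Thus the whole statement should reduce to the elementary estimate $t\ln\mathbb P(W_t\in\sqrt t\,A)\to-\tfrac12\inf_{z\in A}z^2$ for nice $A\subset\mathbb R$, itself immediate from $W_t=\sqrt t\,W^{(t)}_1$ and the Gaussian density. Equivalently, one may phrase the reduction as: $p_t$ and $(0,0,W_t)$ are exponentially equivalent at speed $t$ (indeed $|p_t-(0,0,W_t)|\le t$), so by the standard transfer lemma for exponentially equivalent families it suffices to prove the principle for $(0,0,W_t)$, which in turn follows from Schilder's theorem for $(W_{rt})_{r\in[0,1]}$ and the contraction principle along the continuous map $h\mapsto(0,0,h_1)$ --- the minimal energy of a path from $0$ to $\phi$ being $\phi^2/2$, the contracted rate is exactly $I_\mathrm{macro}$. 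I would mention both viewpoints but carry out the elementary one in detail.

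For the lower bound, given an open $U$ I would pick any $(0,0,\psi)\in U$; since $U$ is open it contains a product $V\times(\psi-\eta,\psi+\eta)$ with $V$ an open neighbourhood of $0\in\mathbb R^2$, and for $t$ small the pathwise bound $(x_t,y_t)\in\{\,|(x,y)|\le t\,\}\subset V$ gives $\mathbb P(p_t\in U)\ge\mathbb P(|W_t-\psi|<\eta)$. The Gaussian estimate then yields $\liminf_{t\to0}t\ln\mathbb P(p_t\in U)\ge-\tfrac12(|\psi|+\eta)^2$; letting $\eta\to0$ and taking the supremum over all such $\psi$ gives $\liminf_{t\to0}t\ln\mathbb P(p_t\in U)\ge-\inf_U I_\mathrm{macro}$, the remaining points of $U$ contributing $I_\mathrm{macro}=\infty$.

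For the upper bound, given a closed $F$ I would set $G=\{\psi\in\mathbb R:(0,0,\psi)\in F\}$ (closed in $\mathbb R$, with $\inf_F I_\mathrm{macro}=\tfrac12\inf_{\psi\in G}\psi^2$ under the convention $\inf\emptyset=+\infty$), fix a truncation level $R>0$, and split $\mathbb P(p_t\in F)\le\mathbb P(p_t\in F,\ |\phi_t|\le R)+\mathbb P(|W_t|>R)$, the last term being at most $2\exp(-R^2/(2t))$. For the main term the key observation is that the compact set $F\cap(\{\,|(x,y)|\le1\,\}\times[-R,R])$ has the property that its intersection with $\{\,|(x,y)|\le t\,\}\times[-R,R]$ lies, for $t$ small, inside any prescribed neighbourhood of the compact set $F\cap(\{0\}^2\times[-R,R])=\{0\}^2\times(G\cap[-R,R])$ --- a routine compactness/subsequence argument, since a putative sequence of counterexamples would, by boundedness, converge to a point of $F$ on $\{0\}^2\times[-R,R]$ lying outside the neighbourhood. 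Hence for every $\delta>0$ and $t$ small $\mathbb P(p_t\in F,\ |\phi_t|\le R)\le\mathbb P(\operatorname{dist}(W_t,G)\le\delta)$, so $\limsup_{t\to0}t\ln$ of that term is at most $-\tfrac12\inf\{\psi^2:\operatorname{dist}(\psi,G)\le\delta\}$; I would then let $\delta\to0$ (using that $G$ is closed and that near-minimisers stay bounded when $G\ne\emptyset$) and afterwards $R\to\infty$. When $G=\emptyset$ the same compactness argument makes $\mathbb P(p_t\in F,\ |\phi_t|\le R)=0$ for $t$ small, and $R\to\infty$ in $2\exp(-R^2/(2t))$ closes the case, matching $\inf_F I_\mathrm{macro}=\infty$.

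The only step that is more than bookkeeping is this compactness argument in the upper bound: it is precisely the statement that the macroscopic picture is blind to everything off the line $\{(0,0,\psi)\}$, and it is where the pathwise constraint $x_t^2+y_t^2\le t^2$ does its work. Everything else is the one-dimensional Gaussian large deviation estimate, together with --- for the slick variant --- off-the-shelf Schilder, contraction-principle and exponential-equivalence statements, so I expect no further obstacle.
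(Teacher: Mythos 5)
Your argument is correct, and it rests on exactly the two facts the paper uses: the deterministic confinement $x_t^2+y_t^2\le t^2$ coming from unit speed, and the one-dimensional Gaussian behaviour of $\phi_t=W_t$. The packaging, however, is different. The paper first proves the exact limit $\lim_{t\to0}t\ln\mathbb P(p_t\in R)=-\inf_R I_\mathrm{macro}$ for open boxes $R$ (the probability reduces to $\mathbb P(W_t\in(a_\phi,b_\phi))$ once $t$ is smaller than a disc contained in the $(x,y)$-rectangle, and is eventually zero otherwise), establishes exponential tightness via Gaussian decay of escape probabilities, and then concludes by the standard ``lower bound is local, upper bound is stable under finite covers'' mechanism. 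Your lower bound is essentially the paper's box argument, but for the upper bound you replace the cover-plus-tightness step by a truncation in $\phi$ together with a compactness argument showing that, for small $t$, the trace of the closed set $F$ on the slab $\{|(x,y)|\le t\}\times[-R,R]$ projects into a $\delta$-neighbourhood of $G=\{\psi:(0,0,\psi)\in F\}$, reducing everything to a closed-set estimate for a one-dimensional Gaussian; your limit interchanges ($\delta\to0$ using closedness of $G$ and boundedness of near-minimisers, then $R\to\infty$) are handled correctly, including the case $G=\varnothing$. Your ``slick'' variant --- exponential equivalence of $p_t$ and $(0,0,W_t)$ at speed $t$ (trivial here since the distance is $\le t$, so the relevant probability vanishes for $t<\delta$), followed by the contraction principle --- is genuinely a cleaner route than either direct argument and is not in the paper; its only cost is invoking the goodness of the rate function and off-the-shelf LDP transfer lemmas, whereas the paper's and your elementary versions are self-contained. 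Either of your two routes would serve as a complete proof.
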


\begin{proof}
We show the result from the following two intermediate statements.
\begin{itemize}
\item For any open box $R=(a_x,b_x)\times(a_y,b_y)\times(a_\phi,b_\phi)$, the large deviation principle holds as an equality:
\[ \lim_{t\to0}t\ln\big(\mathbb P(p_t\in R)\big) = -\inf_{p\in R}I(p). \]
\item The occupation probability of complements of compact sets decrease exponentially, with arbitrarily large rate:
\[ \lim_{M\to\infty}\limsup_{t\to0}t\ln\mathbb P\big(p_t\notin[-M,M]^3\big)=-\infty. \]
\end{itemize}
The proposition follows because the lower bound is local, and the upper bound is stable under finite covers.

The second fact is a consequence of the fact that the probability of escape decreases in a Gaussian fashion; this is totally analogous to the proof of Proposition \ref{prop:exponentialFeller}. As for the first one, fix such a box $R$. If $0$ does not belong to $(a_x,b_x)\times(a_y,b_y)$, the probability is eventually zero, and the result is obvious. Otherwise, this rectangle actually contains a small open disc $\{x^2+y^2<r^2\}$. Then for all $t<r$,
\[ \mathbb P(p_t\in R)
 = \mathbb P\big(\phi_t\in(a_\phi,b_\phi)\big)
 = \mathbb P\big(W_t\in(a_\phi,b_\phi)\big) \]
and the result follows from the large deviation principle for Gaussian variables:
\[ \lim_{t\to0}t\ln\big(\mathbb P(p_t\in R)\big)
 = \lim_{t\to0}t\ln\mathbb P\big(W_t\in(a_\phi,b_\phi)\big)
 = -\inf_{w\in[a_\phi,b_\phi]}\frac{|w|^2}2
 = -\inf_{p\in R}I(p). \qedhere \]
\end{proof}

This result shows that any kind of interesting large deviation principle involving $x$ and $y$ should take place at a different scale. Since the problem was that we could not reach $(x,y,\phi)$ in time smaller than $|(x,y)|$, we look at the variable $(x_t/t,y_t/t,\phi_t)$ (the mesoscopic scale), so that every point with in the cylinder $\{(x,y,\phi):x^2+y^2<1\}$ can be reached for any given time. This time, we get a rate function depending non-trivially in $(x,y)$.

Let $I_W$ be the celebrated rate function for Brownian motion, i.e. the energy defined on continuous curves $w\in\mathcal C([0,1])$ by
\[ I_W:w\mapsto
   \begin{cases}
   \frac12\int_0^1|\dot w(u)|^2\mathrm du & \text{if }w:u\mapsto \int_0^u\dot w(r)\mathrm dr\text{ for some }\dot w\in L^2([0,1]), \\
   \infty & \text{else.}
   \end{cases} \]
In the following, we will simply write $\frac12\int_0^1|w'(t)|^2\mathrm dt$, with the understanding that the quantity is infinite if a representation of $w$ as the integral of a square integrable derivative does not exist. Schilder's theorem asserts that the collection of variables $(\sqrt tW)_{t>0}$ satisfy a large deviation principle in $\mathcal C([0,1])$ with rate function $I_W$.

\begin{proposition}
\label{prop:LDPmeso}
For a fixed $p=(x,y,\phi)\in\mathbb R^3$, we set $\Gamma(p)$ the set of continuous curves $\gamma:[0,1]\to\mathbb R$ such that
\begin{align*}
\gamma(0)&=0, & \gamma(1)&=\phi, \\
\int_0^t\cos(\gamma(t))\mathrm dt&=x, & \int_0^t\sin(\gamma(t))\mathrm dt&=y.
\end{align*}
For instance, $\Gamma(p)=\varnothing$ for $|(x,y)|>1$. Then the variables $(x_t/t,y_t/t,\phi_t)_{t>0}$ satisfy a large deviation principle with rate function
\[ I_\mathrm{meso}:p\mapsto
   \inf_{\gamma\in\Gamma(p)}I_W(\gamma)
 = \frac12\inf_{\gamma\in\Gamma(p)}\int_0^1|\gamma'(t)|^2\mathrm dt. \]
\end{proposition}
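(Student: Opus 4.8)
The plan is to realise the rescaled triple $(x_t/t, y_t/t, \phi_t)$ as a \emph{fixed} continuous functional of the rescaled Brownian motion $W^{(t)}$, and then to combine Schilder's theorem (recalled just above) with the contraction principle. First I would reparametrise. Since $p_0 = 0$ we have $\phi_s = W_s$, hence $\phi_t = W_t$, and the change of variable $s = ut$ gives
\[ \frac{x_t}{t} = \int_0^1 \cos(W_{ut})\,\mathrm du, \qquad \frac{y_t}{t} = \int_0^1 \sin(W_{ut})\,\mathrm du. \]
Recalling that $W^{(t)} : u \mapsto W_{ut}/\sqrt t$ is a standard Brownian motion on $[0,1]$, so that $W_{ut} = \sqrt t\, W^{(t)}_u$, and introducing
\[ G : \mathcal C([0,1]) \to \mathbb R^3, \qquad G(\gamma) := \Big(\int_0^1 \cos\gamma(u)\,\mathrm du,\ \int_0^1 \sin\gamma(u)\,\mathrm du,\ \gamma(1)\Big), \]
we get $(x_t/t, y_t/t, \phi_t) = G\big(u\mapsto W_{ut}\big) = G\big(\sqrt t\, W^{(t)}\big)$ almost surely; in particular the triple has the same law as $G$ applied to the restriction of $\sqrt t\, W$ to $[0,1]$.

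Second, I would check that $G$ is continuous for the uniform topology: if $\gamma_n \to \gamma$ uniformly, then $\cos\gamma_n \to \cos\gamma$ and $\sin\gamma_n \to \sin\gamma$ uniformly (because $\cos$ and $\sin$ are $1$-Lipschitz), so the two integrals converge, while evaluation $\gamma\mapsto\gamma(1)$ is obviously continuous. Schilder's theorem provides a large deviation principle for $(\sqrt t\, W)_{t>0}$ in $\mathcal C([0,1])$ with the \emph{good} rate function $I_W$ (at speed $1/t$), so the contraction principle yields a large deviation principle at speed $1/t$ for $(G(\sqrt t\, W))_{t>0} = (x_t/t, y_t/t, \phi_t)_{t>0}$ in $\mathbb R^3$, with rate function
\[ p \mapsto \inf\big\{ I_W(\gamma) : \gamma \in \mathcal C([0,1]),\ G(\gamma) = p \big\}. \]

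Finally I would identify this infimum with $\inf_{\gamma\in\Gamma(p)} I_W(\gamma)$. The key observation is that $I_W(\gamma) < \infty$ forces $\gamma(0) = 0$ (together with absolute continuity and a square-integrable derivative), so that $\{I_W < \infty\} \cap G^{-1}(p)$ is exactly the set of finite-energy elements of $\Gamma(p)$. Since adding infinite-energy curves does not change an infimum of $I_W$, the two infima agree; and when $\Gamma(p)$ contains no finite-energy path — in particular when $\Gamma(p) = \varnothing$, e.g. for $|(x,y)| > 1$ — both are $+\infty$, consistently with the stated convention.

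I do not expect a serious obstacle: once the $t$-dependent family has been rewritten as a fixed continuous image of $\sqrt t\, W$, the result is a textbook application of the contraction principle. The only steps requiring a line of care are verifying that the time change $u \mapsto W_{ut}$ is genuinely distributed as $\sqrt t$ times a standard Brownian path on $[0,1]$ (so that Schilder's theorem applies to the correct object), and the bookkeeping identifying $\{I_W < \infty\} \cap G^{-1}(p)$ with the finite-energy part of $\Gamma(p)$.
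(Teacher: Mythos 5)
Your proposal is correct and follows essentially the same route as the paper: rewrite $(x_t/t,y_t/t,\phi_t)$ as a fixed continuous functional of the rescaled Brownian path, then apply Schilder's theorem together with the contraction principle and identify the preimage of $p$ with $\Gamma(p)$. Your extra care in matching $\{I_W<\infty\}\cap G^{-1}(p)$ with the finite-energy part of $\Gamma(p)$ (using that finite energy forces $\gamma(0)=0$) is a small tidying of a point the paper passes over silently, not a different argument.
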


Although we believe this result to be interesting in itself, a concrete description of the problem would require a good understanding of $I_\mathrm{meso}$. This function is actually difficult to estimate; see \cite[sec. IV.2]{PThesis} for an attempt at the study of this functional, using variational techniques. Here are a few partial results using elementary methods.

\begin{proposition}
\label{prop:Imeso}~

\begin{enumerate}
\item The rate function is extremal along the following sets:
\begin{align*}
\{I_\mathrm{meso}=0\} &= \{(1,0,0)\}, &
\{I_\mathrm{meso}=\infty\} &= \{p\neq(1,0,0)\text{ such that }|(x,y)|\geq1\}.
\end{align*}
\item If $p$ is a point reached along a path of constant curvature, i.e.
\begin{align*}
x &= \int_0^1\cos(Ct)\mathrm dt,  &  y &= \int_0^1\sin(Ct)\mathrm dt,  &  \phi &= C,
\end{align*}
then $I_\mathrm{meso}(p)=C^2/2$. For instance, $I_\mathrm{meso}(0,0,\pm2\pi)=2\pi^2$ for $C=\pm2\pi$.
\item The above path is the easiest way to get back to the origin:
\[ \inf_{k\in\mathbb Z,|k|\neq1}I_\mathrm{meso}(0,0,2k\pi)>2\pi^2. \]
In particular, the corresponding rate function in $\mathbb R^2\times\mathbb S^1$ is $2\pi^2$ at the origin.
\end{enumerate}
\end{proposition}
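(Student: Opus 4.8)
The plan is to argue throughout from the variational formula $I_{\mathrm{meso}}(p)=\inf_{\gamma\in\Gamma(p)}I_W(\gamma)$ of Proposition~\ref{prop:LDPmeso}, after recording two facts used repeatedly. (i) If $\Gamma(p)\neq\varnothing$ the infimum is attained: a minimising sequence $\gamma_n$ has bounded energy, hence is equi-Hölder (by Cauchy--Schwarz $|\gamma_n(s)-\gamma_n(t)|\le|s-t|^{1/2}(2I_W(\gamma_n))^{1/2}$) and pinned at $0$, so a subsequence converges uniformly, the endpoint and integral constraints survive by dominated convergence, and $I_W$ is lower semicontinuous. (ii) The pair $(\int_0^1\cos\gamma,\int_0^1\sin\gamma)$ is an average of the unit vectors $e^{\mathsf i\gamma(t)}$, so it lies in the closed unit disc and equals a point of the unit circle only if $e^{\mathsf i\gamma(t)}$ is a.e.\ constant, which for continuous $\gamma$ with $\gamma(0)=0$ forces $\gamma\equiv0$ (and then $(x,y,\phi)=(1,0,0)$). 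For part~(1): from (ii), $\Gamma(p)=\varnothing$ when $|(x,y)|>1$ and also when $|(x,y)|=1$ with $p\neq(1,0,0)$, while $\gamma\equiv0\in\Gamma(1,0,0)$; conversely I would show $\Gamma(p)\neq\varnothing$ whenever $|(x,y)|<1$ using curves that, apart from two short ramps $0\to a$ and $b\to\phi$ of total duration $2\eta$, spend time $\lambda(1-2\eta)$ at angle $a$ and $(1-\lambda)(1-2\eta)$ at angle $b$, so that the average direction is $(1-2\eta)(\lambda e^{\mathsf ia}+(1-\lambda)e^{\mathsf ib})+O(\eta)$ and, as $(\lambda,a,b)$ range, covers every interior point of the disc up to a short degree argument to hit a prescribed target exactly. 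This gives $\{I_{\mathrm{meso}}=\infty\}=\{p\neq(1,0,0):|(x,y)|\ge1\}$; and $I_{\mathrm{meso}}(p)=0$ iff a minimising sequence of vanishing energy exists, which by (i) converges uniformly to the constant $0$, i.e.\ iff $p=(1,0,0)$.

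For part~(2), Cauchy--Schwarz gives the universal bound
\[ I_W(\gamma)=\tfrac12\int_0^1|\gamma'|^2\ge\tfrac12\Bigl(\int_0^1\gamma'\Bigr)^2=\tfrac12\bigl(\gamma(1)-\gamma(0)\bigr)^2=\phi^2/2, \]
so $I_{\mathrm{meso}}(x,y,\phi)\ge\phi^2/2$; when $p$ is reached along a constant-curvature path the linear curve $\gamma(t)=Ct$ lies in $\Gamma(p)$ with energy $C^2/2=\phi^2/2$, so equality holds and $I_{\mathrm{meso}}(p)=C^2/2$. Taking $C=\pm2\pi$ gives $I_{\mathrm{meso}}(0,0,\pm2\pi)=2\pi^2$.

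For part~(3), the case $|k|\ge2$ is immediate from the previous bound: $I_{\mathrm{meso}}(0,0,2k\pi)\ge(2k\pi)^2/2=2k^2\pi^2\ge8\pi^2$. The case $k=0$ is the crux: $\Gamma(0,0,0)\neq\varnothing$ by part~(1) (or take $\gamma(t)=4\pi\min(t,1-t)$), so by (i) the infimum is attained at some $\gamma^\ast$, and it suffices to show every $\gamma\in\Gamma(0,0,0)$ has length $\ell(\gamma):=\int_0^1|\gamma'|>2\pi$, since then $I_W(\gamma^\ast)\ge\tfrac12\ell(\gamma^\ast)^2>2\pi^2$ by Cauchy--Schwarz, whence $I_{\mathrm{meso}}(0,0,0)>2\pi^2$ and, combined with the $|k|\ge2$ estimate, $\inf_{|k|\neq1}I_{\mathrm{meso}}(0,0,2k\pi)>2\pi^2$; as the lifts of the origin in $\mathbb R^2\times\mathbb S^1$ are exactly the $(0,0,2k\pi)$, the contracted rate there equals $\min_k I_{\mathrm{meso}}(0,0,2k\pi)=2\pi^2$. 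To prove $\ell(\gamma)>2\pi$, suppose $\ell(\gamma)\le2\pi$: since $\gamma(0)=\gamma(1)=0$, visiting $\sup\gamma$ and $\inf\gamma$ and returning forces total variation $\ge2R$ with $R:=\sup\gamma-\inf\gamma$, so $R\le\ell(\gamma)/2\le\pi$; then the $e^{\mathsf i\gamma(t)}$ lie in a closed arc of length $R$ with midpoint angle $\mu$, and since $\int_0^1e^{\mathsf i\gamma}=0$ in particular
\[ 0=\int_0^1\cos\bigl(\gamma(t)-\mu\bigr)\,\mathrm dt\ge\cos(R/2)\ge0, \]
which forces $R=\pi$ and $\gamma(t)\in\{\mu-\pi/2,\mu+\pi/2\}$ for a.e.\ $t$; but a continuous function that is a.e.\ two-valued is constant (its two closed level sets cover the connected interval $[0,1]$), contradicting $\int_0^1e^{\mathsf i\gamma}=0$.

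I expect the only genuinely delicate point to be the non-emptiness claim $\Gamma(p)\neq\varnothing$ for $|(x,y)|<1$ in part~(1): producing an average direction equal to a prescribed interior point of the disc \emph{exactly} rather than approximately needs a short degree or implicit-function argument applied to the explicit family above. Everything else — the compactness fact (i), the Cauchy--Schwarz bounds, and the length estimate for the $k=0$ case — is elementary.
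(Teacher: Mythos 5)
Your proposal is correct, and parts (1) and (2) run essentially along the paper's lines: the same emptiness/averaging observations for $|(x,y)|\geq1$, the same Cauchy--Schwarz bound $I_W(\gamma)\geq\phi^2/2$ matched by the linear curve, and the same level of informality for the non-emptiness of $\Gamma(p)$ when $|(x,y)|<1$ (the paper simply declares the existence of a suitable planar curve ``clear on a drawing''; your ramp-plus-two-plateaus family with a final degree/IVT step is no less rigorous, and you flag the missing detail honestly). The genuine divergence is in part (3), in how strictness of $I_{\mathrm{meso}}(0,0,0)>2\pi^2$ is obtained. The paper first proves, for each individual $\gamma\in\Gamma(0)$, that the range $[a,b]$ of $\gamma$ satisfies $b-a>\pi$ and hence $I_W(\gamma)\geq\tfrac12\big(2(b-a)\big)^2\geq2\pi^2$, and then upgrades the non-strict infimum bound by a bespoke minimizing-sequence argument: it compares $\gamma_n$ with piecewise-linear curves $\overline\gamma_n$ via an orthogonality (``conditional expectation'') identity, shows $b_n-a_n\to\pi$ and $\gamma_n-\overline\gamma_n\to0$ uniformly, extracts a limit curve in $\Gamma(0)$ with range exactly $\pi$, and contradicts the pointwise bound. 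You instead prove a general attainment lemma (bounded energy gives equi-H\"older minimizing sequences, Arzel\`a--Ascoli, constraints pass to the limit, $I_W$ lower semicontinuous), and combine it with the pointwise \emph{strict} length bound $\ell(\gamma)>2\pi$ for every finite-energy $\gamma\in\Gamma(0)$ --- your arc-of-length-$\leq\pi$ argument, with the ``continuous a.e.\ two-valued implies constant'' endgame, is correct --- so that strictness follows immediately at the minimiser via $I_W(\gamma^\ast)\geq\tfrac12\ell(\gamma^\ast)^2$. Both routes ultimately rest on compactness of minimizing sequences; yours packages it once as a reusable lemma and avoids the comparison-curve computation, at the cost of invoking lower semicontinuity of $I_W$ (standard, since it is Schilder's good rate function), while the paper's argument is self-contained and yields along the way the more precise information that near-minimising curves are close to the explicit tent-shaped profiles. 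Your treatment of $|k|\geq2$ via the $\phi^2/2$ bound, rather than the exact values $2k^2\pi^2$ from part (2), is an immaterial difference.
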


\begin{remark}
This is very different to the sub-elliptic case. In the latter, the diffusion satisfies a large deviation principle with rate function $p\mapsto-d(p_0,p)^2$, for $d$ the so-called Carnot-Carathéodory distance (before renormalisation) --- see for instance the results of Léandre \cite{LeandreMaj,LeandreMin}. In particular, the kernel concentrates on the diagonal. In our case, the diagonal is far too distant to concentrate the mass: starting from $0$, the probability that $x_t$ be at most $t/2$ is easily seen to be exponentially small. If we think that $p_0+tX(p_0)$ plays the role of the diagonal in our case, it is true that the mass concentrate \emph{around} there, but it does not quite concentrates \emph{at} this point. In fact, since the support of $u_t(0,\cdot)$ is included in $\{p:x^2+y^2\leq t^2\}$, the value of the kernel at this point is zero. In the sub-Riemannian case, the kernel on the diagonal behaves as a negative power of $t$, in a similar way as $t^{-4}$ is our dominant term. This is a tiny fragment of a celebrated result of Ben Arous, see \cite{BenArous}.
\end{remark}

\begin{proof}[Proof of Proposition \ref{prop:LDPmeso}]
Let us fix $t>0$, and define the Brownian motion $\widetilde W:u\mapsto t^{-1/2}W_{tu}$. Then
\begin{align*}
\frac{x_t}t &= \int_0^1\cos\big(\sqrt t\widetilde W_u\big)\mathrm du, &
\frac{y_t}t &= \int_0^1\sin\big(\sqrt t\widetilde W_u\big)\mathrm du, &
\phi_t &= \sqrt t\widetilde W_1.
\end{align*}
This means that the vector depends continuously on $\sqrt t\widetilde W$, seen as a vector in $\mathcal C([0,1])$ with the uniform topology, say according to some function $F:\mathcal C([0,1])\to\mathbb R^3$. Since the latter satisfies a large deviation principle with rate function $I_W$, the contraction principle implies that $(x_t/t,y_t/t,\phi_t)$ satisfies a large deviation principle with rate function
\[ I_\mathrm{meso}:p\mapsto\inf_{w:F(w)=p}I_W(w). \]

The set $F^{-1}\{p\}$ is exactly the $\Gamma(p)$ of the proposition, and we get the expected expression for $I_\mathrm{meso}$.
\end{proof}

\begin{proof}[Proof of Proposition \ref{prop:Imeso}]
We use the same function $F$ as in the previous proof, sending a continuous curve $\gamma\in\mathcal C([0,1])$ to the endpoint $p$ with
\begin{align*}
x &= \int_0^1\cos\big(\gamma(u)\big)\mathrm du, &
y &= \int_0^1\sin\big(\gamma(u)\big)\mathrm du, &
\phi &= \gamma(1).
\end{align*}

\begin{enumerate}
\item Clearly $\Gamma(1,0,0)=\{0\}$ and $\Gamma(p)=\varnothing$ for $|(x,y)|\geq1$ with $p\neq(1,0,0)$. We will have $\Gamma(p)\neq\varnothing$ if there exists a smooth curve of length 1 in the plane joining $0$ and $(x,y)$, with non-vanishing first derivative and prescribed starting and ending angle (note that the angle belongs to $\mathbb R$ rather than $\mathbb S^1$, so one might need to spin a few times). The existence is clear on a drawing for any $p$ such that $|(x,y)|<1$.

It remains only to show that $I_\mathrm{meso}(p)=0$ happens only for $p=(1,0,0)$. If $p$ is such a point, then there exists a sequence of curves $\gamma_n$ with $F(\gamma_n)=p$ and energy tending to zero. Using the Cauchy-Schwarz inequality, it means that $(\gamma_n)_{n\geq0}$ tends to zero in the continuous topology, so $p=F(\gamma_n)$ tends to $F(0)$ as $n$ goes to infinity, and $p=F(0)=(1,0,0)$.

\item Fix $C$, and consider the point $p=F(t\mapsto Ct)$. It is clear, considering $\gamma:t\mapsto Ct$, that $I(p)\leq C^2/2$. Going the other direction, for a curve $\gamma\in\Gamma(p)$,
\[   \frac12\int_0^1\gamma'(t)^2\mathrm dt
\geq \frac12\left(\int_0^1\gamma'(t)\mathrm dt\right)^2
   = \frac12\gamma(1)^2 = \frac{\phi^2}2 = C^2/2. \]

\item Considering the case of constant curvature with $C=2k\pi$, $k\neq0$,
\[ \inf_{k\in\mathbb Z,|k|>1}I(0,0,2k\pi) = \inf_{k\geq2}\frac{(2k\pi)^2}2 > 2\pi^2. \]
It remains to show that $I(0,0,0)>2\pi^2$. Fix $p=0$ and $\gamma\in\Gamma(0)$. Since it is continuous, the image of $\gamma$ in $\mathbb R$ is a closed interval $[a,b]$ containing zero. Suppose $b-a\leq\pi$; then
\begin{align*}
\Re\left(\mathbf e^{-\mathsf i(a+b)/2}(x+\mathsf iy)\right)
   &= \Re\int_0^1\mathbf e^{-\mathsf i(a+b)/2+\mathsf i\gamma(t)}\mathrm dt \\
   &= \int_0^1\cos\Big(\gamma(t)-\frac{a+b}2\Big)\mathrm dt \\
   &> \int_0^1\cos\Big(\frac{b-a}2\Big)\mathrm dt \geq 0.
\end{align*}
The strict inequality comes from the fact that $\gamma$ cannot be constant. This is not possible, since $p=0$ and the left hand side vanishes; hence $b-a>\pi$. Let us call $0\leq t^-<1$ the first time $\gamma$ hits $\{a,b\}$, and $0<t^+\leq 1$ the first time it hits the second member of the pair. Up to considering $-\gamma$, we assume without loss of generality that $\gamma(t^-)=a$.

Decomposing the energy of the curve, we get
\begin{align*}
      \frac12\int_0^1|\gamma'(t)|^2\mathrm dt
&\geq \frac12\left(\int_0^1|\gamma'(t)|\mathrm dt\right)^2 \\
&\geq \frac12\left( \left|\int_0^{t^-}\gamma'(t)\mathrm dt\right|
                  + \left|\int_{t^-}^{t^+}\gamma'(t)\mathrm dt\right|
                  + \left|\int_{t^+}^1\gamma'(t)\mathrm dt\right|\right)^2 \\
   &= \frac12(-a + (b-a) + b)^2 \\
&\geq \frac12(2\pi)^2
    = 2\pi^2.
\end{align*}
Since this lower bound holds for all $\gamma\in\Gamma(p)$, this implies $I(0)\geq2\pi^2$.

This is enough to get $I^\mathbb{S}(0)=2\pi^2$, but we can actually show the inequality is strict. Indeed, if it is not, then we can find a sequence of curves $\gamma_n\in\Gamma(p)$ whose energy decrease to $2\pi^2$. Define $a_n\leq0\leq b_n$, $0\leq t^-_n<t^+_n\leq 1$ as above with respect to the curve $\gamma_n$. Up to replacing a number of $\gamma_n$ by $-\gamma_n$, we can assume that $\gamma_n(t^-_n)=a_n$ for all $n$. Define
\[ \overline\gamma_n:t\mapsto\int_0^t\left( \frac{a_n}{t^-_n}\mathbf1_{[0,t^-_n]}(u)
                                          + \frac{b_n-a_n}{t^+_n-t^-_n}\mathbf1_{[t^-_n,t^+_n]}(u)
                                          - \frac{b_n}{1-t^+_n}\mathbf1_{[t^+_n,1]}(u) \right)
                                     \mathrm du. \]
We introduce these functions because
\[ \int_0^1\big(\gamma_n'(t)-\overline\gamma_n'(t)\big)\overline\gamma_n'(t)\mathrm dt = 0; \]
in some sense that can be made precise, $\overline\gamma_n$ is a conditional expectation of $\gamma_n$. In particular,
\[ \int_0^1|\gamma_n'(t)|^2\mathrm dt
 = \frac12\int_0^1|\gamma_n'(t)-\overline\gamma_n'(t)|^2\mathrm dt
 + \frac12\int_0^1|\overline\gamma_n'(t)|^2\mathrm dt. \]

Using the same argument as above,
\begin{align*}
      \frac12\int_0^1|\gamma_n'(t)|^2\mathrm dt
   &= \frac12\int_0^1|\gamma_n'(t)-\overline\gamma_n'(t)|^2\mathrm dt
    + \frac12\int_0^1|\overline\gamma_n'(t)|^2\mathrm dt \\
&\geq \frac12\sup_{0<t\leq1}\frac1t|\gamma_n(t)-\overline\gamma_n(t)| + \frac12\big(2(b_n-a_n)\big)^2
\end{align*}
with $b_n-a_n\geq\pi$. Since the left hand side converges to $2\pi^2$, it means that $b_n-a_n$ converges to $\pi$, and that $\gamma_n$ becomes uniformly close to $\overline\gamma_n(t)$. In particular, we can extract a subsequence such that $a_n$, $b_n$, $t^-_n$ and $t^+_n$ converge to $a,b,t^-,t^+$, and along this subsequence $\gamma_n$ converges uniformly to
\[ \overline\gamma:t\mapsto\int_0^t\left( \frac{a}{t^-}\mathbf1_{[0,t^-]}(u)
                                        + \frac{b-a}{t^+-t^-}\mathbf1_{[t^-,t^+]}(u)
                                        - \frac{b}{1-t^+}\mathbf1_{[t^+,1]}(u) \right)
                                   \mathrm du. \]
By continuity of $F$, this means that $F(\overline\gamma)=p$. But we have seen that a curve satisfying this condition must satisfy $b-a>\pi$: we have a contradiction.\qedhere
\end{enumerate}
\end{proof}

Based on the Hamiltonian approach to the problem, we conjecture that the optimal curves for $p=0$ are the solution to the equation
\begin{align*} \gamma''(t) &= -a\sin\big(\gamma(t)\big), & \gamma(0)&=0,\ \gamma'(0)=b, \end{align*}
together with its translations. Here, $a,b>0$ are the only parameters such that $\gamma$ is 1-periodic and the integral of $\cos(\gamma)$ is zero over this interval (equivalently, $\frac1T\int_0^T\cos(\gamma(s))\mathrm ds$ converges to zero as $T$ goes to infinity).

\appendix

\section{Qualitative regularity for the diffusion}
\label{app:Fellerandco}

In this section we show that the diffusions $p$ and $\widetilde p$ are Feller and hypoelliptic.

\subsection{Hypoellipticity}
\label{ssec:hypoelliptic}
The diffusion will be hypoelliptic if it satisfies the so-called Hörmander condition: every vector field can be written as a linear combination of $\Phi$, and Lie brackets involving $\Phi$ and $X$, with smooth functions as coefficients. See \cite{Hormander} for the original proof of Hörmander. Unfortunately, the original article \cite{Malliavin} presenting Malliavin's alternative proof, for the purpose of which he introduced his calculus, is difficult to get by.

It is easily seen that
\begin{align*}
[X,Y]&=0, & [X,\Phi] &= -Y, & [Y,\Phi] &= X.
\end{align*}
This proves that $\Phi$, $[X,\Phi]$ and $[[X,\Phi],\Phi]$ generate the tangent space at every point, so the original diffusion is regular. As for the approximate one,
\begin{align*}
[\widetilde X,\Phi]&=(\phi-\phi_0)X(p_0)-Y(p_0), & [-(\phi-\phi_0)X(p_0)+Y(p_0),\Phi]&=X(p_0),
\end{align*}
so it is again hypoelliptic.

\subsection{Feller properties}
Looking at the defining equations for $p$ and $\widetilde p$, we can easily show that they are defined for all times. For instance, for $p_0=0$, we have
\begin{align*}
x_t&=\int_0^t\cos(W_s)\mathrm ds, & y_t&=\int_0^t\sin(W_s)\mathrm ds, & \phi_t&=W_t,
\end{align*}
which does not blow up since $W$ is continuous almost surely. For $\widetilde p$, we have in some sense two initial points: the point of reference $p_0$ where we expand $p\mapsto X(p)$, and the initial condition $\widetilde p_0$. In the case $p_0=0$, we have
\begin{align*}
\widetilde x_t&=\widetilde x_0+\int_0^t\Big(1-\frac{(\widetilde \phi_0+W_s)^2}2\Big)\mathrm ds, &
\widetilde y_t&=\widetilde y_0+\int_0^t(\widetilde \phi_0+W_s)\mathrm ds, &
\widetilde\phi_t&=\widetilde\phi_0+W_t,
\end{align*}
so it is defined for all times for the same reason. The case $p_0\neq0$ is similar; in fact it is exactly the same up to isometries of the underlying plane.

As solutions of complete stochastic differential equations with smooth coefficients, they have semigroup mapping the space of continuous bounded functions to itself (this can be seen as a consequence of the fact that solutions of such equations with uniformly Lipschitz coefficients are Feller, see for instance Theorem 2.5 in \cite{RevuzYor}, with a localisation argument). To see that they are actually Feller, it suffices to show that for any fixed compact set $K$, the function $p_0\mapsto\mathbb P_{p_0}(p_t\in K)$ (resp. $\widetilde p_0\mapsto\mathbb P_{\widetilde p_0}(\widetilde p_t\in K)$) vanishes at infinity. This is the object of the following result.

\begin{proposition}
\label{prop:exponentialFeller}
Let $K$ be a compact subset of $\mathbb R^3$, and fix $t>0$. Then
\begin{align*}
p_0&\mapsto\mathbb P_{p_0}(p_t\in K), &
\text{resp. }\widetilde p_0&\mapsto\mathbb P_{\widetilde p_0}(\widetilde p_t\in K),
\end{align*}
exhibit Gaussian decay, resp. exponential decay.

In particular, their semigroups map compactly supported continuous functions to integrable ones.
\end{proposition}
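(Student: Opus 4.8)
The plan is to estimate the two occupation probabilities directly from the explicit integral forms of the processes, and then read off integrability of the semigroups by domination. Fix $R>0$ with $K\subset\{|p|\le R\}$; since $t$ is fixed, all constants below may depend on $t$, and I will repeatedly use the tail bounds $\mathbb P(|W_t|\ge a)\le\exp(-a^2/(2t))$ and $\mathbb P(\sup_{[0,t]}|W|\ge a)\le2\exp(-a^2/(2t))$. For the true diffusion, write $p_0=(x_0,y_0,\phi_0)$: the drifts of $x$ and $y$ have modulus $\le1$ while $\phi_t=\phi_0+W_t$, so $|x_t-x_0|\le t$, $|y_t-y_0|\le t$, and $\{p_t\in K\}$ forces $|x_0|,|y_0|\le R+t$ and $|\phi_0+W_t|\le R$, whence
\[ \mathbb P_{p_0}(p_t\in K)\ \le\ \mathbf 1_{\{|x_0|,\,|y_0|\le R+t\}}\,\exp\!\big(-(|\phi_0|-R)_+^2/(2t)\big). \]
This has compact support in $(x_0,y_0)$ and Gaussian decay in $\phi_0$, which is the asserted Gaussian decay, and it is integrable over $\mathbb R^3$.

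For the approximate diffusion with reference point $p_0$ and initial condition $\widetilde p_0$, set $\psi_0=\widetilde\phi_0-\phi_0$ and $\psi_s=\psi_0+W_s$; integrating \eqref{eq:tildeSDE} exactly as for \eqref{eq:tildepintegral} (which is the case $\widetilde p_0=p_0$) gives $\widetilde\phi_t=\widetilde\phi_0+W_t$ and $M_{p_0}^{-1}(\widetilde p_t-\widetilde p_0)=\big(\int_0^t(1-\psi_s^2/2)\,\mathrm ds,\ \int_0^t\psi_s\,\mathrm ds,\ W_t\big)$. With $S=\sup_{[0,t]}|W|$, each of these three coordinates is bounded by a constant multiple of $1+(|\psi_0|+S)^2$, and since $M_{p_0}$ is an isometry, $|\widetilde p_t-\widetilde p_0|\le C(1+\psi_0^2+S^2)$. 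Thus on $\{\widetilde p_t\in K\}$ one has $|\widetilde p_0|-R\le C(1+\psi_0^2+S^2)$, which forces $\max(\psi_0^2,S^2)\ge\rho^2$ for some $\rho=\rho(\widetilde p_0)$ with $\rho\gtrsim|\widetilde p_0|^{1/2}$ as $|\widetilde p_0|\to\infty$. Now split: if the \emph{deterministic} quantity $\psi_0^2$ is the larger, then $|\widetilde\phi_0|\ge|\psi_0|-|\phi_0|\gtrsim|\widetilde p_0|^{1/2}$ and $\mathbb P_{\widetilde p_0}(\widetilde p_t\in K)\le\mathbb P\big(|W_t|\ge(|\widetilde\phi_0|-R)_+\big)\lesssim\exp(-c|\widetilde p_0|)$; otherwise $\{\widetilde p_t\in K\}\subset\{S\ge\rho\}$ and the maximal inequality gives $\mathbb P_{\widetilde p_0}(\widetilde p_t\in K)\le2\exp(-\rho^2/(2t))\lesssim\exp(-c|\widetilde p_0|)$. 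In either case we obtain exponential decay in $|\widetilde p_0|$, again integrable over $\mathbb R^3$.

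Finally, the last clause is immediate by domination: if $f$ is continuous with $\mathrm{supp}\,f\subset K$, then $|(P_tf)(p_0)|\le\|f\|_\infty\,\mathbb P_{p_0}(p_t\in K)$ and $|(\widetilde P_tf)(\widetilde p_0)|\le\|f\|_\infty\,\mathbb P_{\widetilde p_0}(\widetilde p_t\in K)$, so $P_tf$ and $\widetilde P_tf$ are dominated by the integrable functions just exhibited. I do not expect a genuine obstacle; the step needing a little attention is the case split for the approximate diffusion — the displacement deficit is supplied either by the random fluctuation $S$ (controlled by the Brownian maximal inequality) or by the deterministic offset $\psi_0$ (which then forces $|\widetilde\phi_0|$, hence $|W_t|$, to be large) — together with the bookkeeping of the lost square root that downgrades the Gaussian rate to a merely exponential one for the approximate diffusion.
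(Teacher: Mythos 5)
Your proof is correct and follows essentially the same route as the paper: finite speed of $(x,y)$ plus the Gaussian tail of $\phi_t-\phi_0$ for the true diffusion, and the explicit integral solution of the approximate equation bounded by $C\bigl(1+\psi_0^2+\sup_{[0,t]}|W|^2\bigr)$ with the Brownian maximal inequality for $\widetilde p$. The only (harmless) difference is your two-case treatment of the offset $\psi_0$; the paper avoids the split by bounding the displacement in terms of $\widetilde\phi_t$ (which is bounded on the event $\{\widetilde p_t\in K\}$) rather than $\widetilde\phi_0$, so that everything reduces to the tail of $\sup_{[0,t]}|W|$ alone.
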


\begin{proof}
It is enough to show that for all $M>0$, $p_0\mapsto\mathbb P_{p_0}(|p_t|\leq M)$ decreases as stated, and similarly for $\widetilde p$.

Let us fix such $M$. For the diffusion $p$, the velocity of $(x,y)$ is one, so we know that the function is zero for $(x_0,y_0)$ large enough. This means that we can restrict to $(x_0,y_0)$ in a compact set. Moreover, for the same reason, we have
\[   \mathbb P_{p_0}(|p_t|\leq M)
\leq \mathbb P_{p_0}(|\phi_t|\leq M)
\leq \mathbb P_{p_0}(|\phi_t-\phi_0|\geq |\phi_0|-M). \]
Because $\phi_t-\phi_0$ is Gaussian (centred with variance $t$), this decreases as the exponential of a square as a function of $|\phi_0|$. Combined with the case $(x_0,y_0)$ outside of a compact set, we get the expected result, say in the following form: there exists $c>0$ (depending on $t$ and $M$) such that
\[ \mathbb P_{p_0}(|p_t|\leq M)\leq2\exp(-c|p_0|^2). \]

For $\widetilde p$, we work with $p_0=0$. As noted above, this amounts to acting via an isometry on the underlying plane, so we can do this without loss of generality. We then use the explicit expression for $\widetilde p$ once again:
\begin{align*}
\widetilde x_t&=\widetilde x_0+\int_0^t\Big(1-\frac{(\widetilde \phi_0+W_s)^2}2\Big)\mathrm ds, &
\widetilde y_t&=\widetilde y_0+\int_0^t(\widetilde \phi_0+W_s)\mathrm ds, &
\widetilde\phi_t&=\widetilde\phi_0+W_t,
\end{align*}
in particular
\begin{align*}
      |\widetilde x_t-\widetilde x_0|
&\leq t+t|\widetilde \phi_0|^2+t\sup_{s\leq t}|W_s|^2, \\
      |\widetilde y_t-\widetilde y_0|
&\leq t|\widetilde \phi_0|+t\sup_{s\leq t}|W_s|, \\
      |\widetilde\phi_t-\widetilde\phi_0|
&\leq \sup_{s\leq t}|W_s|.
\end{align*}
One can deduce that there exists a constant $C>0$ large enough (depending on $t$) such that
\[ |\widetilde p_t-\widetilde p_0|\leq C\big(1+\sup_{s\leq t}|W_s|^2+|\widetilde\phi_t|^2\big). \]
Coming back to the probability under consideration,
\begin{align*}
      \mathbb P_{\widetilde p_0}(|\widetilde p_t|\leq M)
&\leq \mathbb P_{\widetilde p_0}\Big(|\widetilde p_0|-M\leq C\big(1+\sup_{s\leq t}|W_s|^2+M^2\big)\Big) \\
&\leq \mathbb P_{\widetilde p_0}\big(C'\sup_{s\leq t}|W_s|^2\geq|\widetilde p_0|-C'\big)
\end{align*}
for $C'>0$ another large constant. Since $\sup_{s\leq t}|W_s|$ has Gaussian tails, we deduce that this probability must decrease exponentially, say in the sense that there exists $c=c(t,M)>0$ such that
\[ \mathbb P_{\widetilde p_0}(|\widetilde p_t|\leq M)\leq2\exp(-c|\widetilde p_0|).\qedhere \]
\end{proof}

\subsection{Time reversal}
\label{ssec:timereversal}

In section \ref{sec:FokkerPlanck}, we used the dual diffusions $p^*$, whose generator is dual to that of $p$. Because the vector fields $X$ and $\Phi$ are divergence-free, its definition in terms of stochastic differential equations are
\begin{align*}
\mathrm dp^*_t &= -X(p^*_t)\mathsf dt+\Phi(p^*_t)\mathrm dW_t, &
\mathrm d\widetilde p^*_t &= -\widetilde X(\widetilde p^*_t)\mathrm dt+\Phi(\widetilde p^*_t)\mathrm dW_t.
\end{align*}
It looks like this negative sign should not change the behaviour of the diffusion much, and it is indeed the case: every result shown for $p$ in this section also holds for $p^*$.

We have also stated in Lemma \ref{lem:timereversal} that the kernels of $p$ and $p^*$ are in a sense dual. Given the results already proved in this section, it only remains to show the last property. Because the vector fields $X$ and $\Phi$ are divergence-free, this is an immediate consequence of Theorem 3.50 in \cite{StroockManifolds}; see also Theorem 4.17 for a version on manifolds.

\subsection{Proof of Lemma \ref{lem:convolutionwu}}
\label{ssec:convolutionwu}

Recall that we want to show that for $f:\mathbb R^3\to\mathbb R$, $f\star\widetilde u_t$ is bounded by $|f|_\infty$ for $f$ bounded, and converges uniformly to $f$ of $f$ is also Lipschitz.

\begin{proof}[Proof of Lemma \ref{lem:convolutionwu}]
Let $f:\mathbb R^3\to\mathbb R$ be bounded. For $p\in\mathbb R^3$ fixed, the space convolution is
\[ (f\star\widetilde u_t)(p) = \int_{\mathbb R^3}f(p_*)\widetilde u_t(p_*,p)\mathrm dp_*. \]
We make the following implicit change of variables: $p_*$ is seen as a function of $\check p_*$, through the relation
\[ p = P_{\sqrt t,p_*}(\check p_*). \]
Explicitly, setting $q=(0,0,\phi-\sqrt t\check\phi)$,
\[ p_* = p_*(\check p) = p - tX(q) - M_qT_{\sqrt t}\,\check p. \]
Since the Jacobian of this transformation is $t^4$, we can use the representation of $u$ in terms of $\check u$ to see that
\[ (f\star\widetilde u_t)(p)
 = \int_{\mathbb R^3}f\big(p_*(\check p_*)\big)\check u(\check p_*)\mathrm d\check p_*, \]
which gives the immediate bound
\[   |f\star\widetilde u_t|_\infty
\leq |f|_\infty\cdot|\check u|_{\mathrm L^1}
   = |f|_\infty. \]

Suppose moreover that $f$ is Lipschitz. Then
\begin{align*}
      |(f\star\widetilde u_t)(p) - f(p)|
&\leq \int_{\mathbb R^3}\big|f\big(p_*(\check p_*)\big)-f(p)\big|\check u(\check p_*)\mathrm d\check p_* \\
&\leq |f|_{\mathrm{Lip}}\int_{\mathbb R^3}\big|p_*(\check p_*)-p\big|\check u(\check p_*)\mathrm d\check p_* \\
&\leq \sqrt t\cdot(1+\sqrt t)|f|_{\mathrm{Lip}}\int_{\mathbb R^3}(1+|\check p_*|)\check u(\check p_*)\mathrm d\check p_*.
\end{align*}
This last expression does not depend on $p$. Since this last integral is bounded (it is the expectation of $1+|\xi|$, where $\xi$ is defined in equation \eqref{eq:defxi}, hence it is stochastically bounded by a quadratic polynomial in a Gaussian), we get the expected convergence in the uniform topology.
\end{proof}

\section{Closed form for the Fourier transform}
\label{app:explicitFourier}

Here we look for an explicit expression for the characteristic function of $\xi$, as defined in equation~\eqref{eq:defxi}. As discussed in the beginning of section \ref{sec:checku}, it is also the Fourier transform of the approximated kernel $\widetilde u$, up to a change of scale.

It will be useful to introduce the \emph{ad hoc} cosine and sine functions
\begin{align*}
\operatorname{cah}:\zeta&\mapsto\sum_{k\geq0}\frac{\zeta^k}{(2k)!} &
& \text{and} &
\operatorname{sah}:\zeta&\mapsto\sum_{k\geq0}\frac{\zeta^k}{(2k+1)!}.
\end{align*}
They are entire functions, and for any $\alpha\in\mathbb C^*$,
\begin{align*}
\operatorname{cah}\big(\alpha^2\big)&=\cosh(\alpha) &
& \text{and} &
\operatorname{sah}\big(\alpha^2\big)&=\sinh(\alpha)/\alpha.
\end{align*}
In particular one can check the important property that they have no zero over $\{\zeta:\Re\zeta>-\pi^2/4\}$. Namely, their zeros are of the form $-\pi^2k^2/4$, with $k$ odd for $\operatorname{cah}$ and even non-zero for $\operatorname{sah}$. Over this domain, we can therefore define $\operatorname{tah}:=\operatorname{sah}/\operatorname{cah}$.

\begin{theorem}
The characteristic function of the variable $\xi$ defined in equation \eqref{eq:defxi}, with density $\check u$, is given by
\begin{multline}
\label{eq:explicitFourier}
(\lambda,\mu,\nu)\mapsto\mathbb E\left[\mathsf e^{\mathsf i(\lambda,\mu,\nu)\cdot\xi}\right] \\
= \frac1{\sqrt{\operatorname{cah}(\mathsf i\lambda)}}
\exp\left(
- \frac 1{2\mathsf i\lambda}\Big(1-\operatorname{tah}(\mathsf i\lambda)\Big)\mu^2
- \frac1{\mathsf i\lambda}\Big(1-\frac1{\operatorname{cah}(\mathsf i\lambda)}\Big)\mu\nu
- \frac12\operatorname{tah}(\mathsf i\lambda)\nu^2
\right),
\end{multline}
where $\zeta\mapsto\sqrt{\operatorname{cah}(\zeta)}$ is the unique analytic continuation over $\{\zeta:\Re\zeta>-\pi^2/4\}$ of the obvious function over $(0,\infty)$.
\end{theorem}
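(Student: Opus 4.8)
The plan is to sum explicitly the two series appearing in the representation $\hat u=\exp(f)$ established within the proof of Theorem~\ref{th:checkuPsi}, namely
\[ f(\lambda,\mu,\nu)
 = -\sum_{n\geq0}\frac12\ln\Big(1+\frac{\mathsf i\lambda}{\pi^2(n+1/2)^2}\Big)
   - \sum_{n\geq0}\frac{\big(\frac{(-1)^n\mu}{\pi(n+1/2)}+\nu\big)^2}{\pi^2(n+1/2)^2+\mathsf i\lambda}, \]
see equation~\eqref{eq:logFourierxi}. Both series converge absolutely for every real $\lambda$ and all $\mu,\nu$, so one may fix such a $\lambda$ and manipulate them termwise; the closed-form identities below then extend by analyticity from the regime where all the transcendental functions involved are positive. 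Writing $a=\mathsf i\lambda$, the second series expands as $S_1(a)\mu^2+2S_2(a)\mu\nu+S_3(a)\nu^2$ with
\begin{align*}
S_1(a) &= \sum_{n\geq0}\frac{1}{\pi^2(n+1/2)^2\big(\pi^2(n+1/2)^2+a\big)}, &
S_2(a) &= \sum_{n\geq0}\frac{(-1)^n}{\pi(n+1/2)\big(\pi^2(n+1/2)^2+a\big)}, \\
S_3(a) &= \sum_{n\geq0}\frac{1}{\pi^2(n+1/2)^2+a}. &&
\end{align*}

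For the first series I would use the Weierstrass product for $\cosh$, which is precisely the statement that the zeros of $\operatorname{cah}$ are the $-\pi^2(n+1/2)^2$, in the form $\operatorname{cah}(a)=\prod_{n\geq0}\big(1+\frac{a}{\pi^2(n+1/2)^2}\big)$; taking logarithms yields $-\frac12\sum_n\ln\big(1+\frac{a}{\pi^2(n+1/2)^2}\big)=-\frac12\ln\operatorname{cah}(a)$, hence the prefactor $\operatorname{cah}(\mathsf i\lambda)^{-1/2}$, the branch of the square root being the one continued from $\lambda$ real near $0$. For $S_3$ I would invoke the Mittag-Leffler expansion of $\tanh$ (equivalently, the logarithmic derivative of that product), giving $S_3(a)=\tanh(\sqrt a)/(2\sqrt a)=\frac12\operatorname{tah}(a)$. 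Then $S_1$ follows from the partial fraction $\frac1{b(b+a)}=\frac1a\big(\frac1b-\frac1{b+a}\big)$ together with the classical value $\sum_n\frac1{\pi^2(n+1/2)^2}=\frac12$, yielding $S_1(a)=\frac1{2a}\big(1-\operatorname{tah}(a)\big)$.

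The one genuinely new input is $S_2$, the coefficient of the cross term $\mu\nu$: here I would use the partial fraction $\frac1{b(b^2+a)}=\frac1a\big(\frac1b-\frac{b}{b^2+a}\big)$ with $b=\pi(n+1/2)$, the Leibniz series $\sum_n\frac{(-1)^n}{\pi(n+1/2)}=\frac12$, and the Mittag-Leffler expansion of $\operatorname{sech}$, namely $\frac1{\cosh z}=2\pi\sum_{n\geq0}\frac{(-1)^n(n+1/2)}{z^2+\pi^2(n+1/2)^2}$, which one checks by summing residues over the conjugate pairs of poles $\pm\mathsf i\pi(n+1/2)$ and evaluating at $z=0$. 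This gives $\sum_n\frac{(-1)^n\pi(n+1/2)}{\pi^2(n+1/2)^2+a}=\frac1{2\operatorname{cah}(a)}$ and hence $S_2(a)=\frac1{2a}\big(1-\frac1{\operatorname{cah}(a)}\big)$. Assembling, $\exp(f)$ equals $\operatorname{cah}(\mathsf i\lambda)^{-1/2}$ times $\exp\big(-S_1(\mathsf i\lambda)\mu^2-2S_2(\mathsf i\lambda)\mu\nu-S_3(\mathsf i\lambda)\nu^2\big)$, which is exactly \eqref{eq:explicitFourier}; the apparent poles at $\lambda=0$ are removable since $\operatorname{tah}(a)=1-a/3+O(a^2)$ and $\operatorname{cah}(a)^{-1}=1-a/2+O(a^2)$.

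I do not expect a serious obstacle: the whole argument is bookkeeping with classical product and partial-fraction expansions. The only places demanding care are, first, fixing the branch of $\sqrt{\operatorname{cah}(\mathsf i\lambda)}$ by analytic continuation from the positive real regime, using that $\operatorname{cah}$ does not vanish on $\{\Re\zeta>-\pi^2/4\}$ as already noted in the text; and second, pinning down the constant $2\pi$ in the $\operatorname{sech}$ expansion that produces the $\mu\nu$ term, which is the step most prone to arithmetic slips.
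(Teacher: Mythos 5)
Your proposal is correct and follows essentially the same route as the paper's Appendix~\ref{app:explicitFourier}: starting from the series representation \eqref{eq:logFourierxi}, summing the prefactor via the product expansion of $\operatorname{cah}$, the $\nu^2$ and $\mu^2$ coefficients via the tangent/hyperbolic-tangent partial fraction expansion together with $\sum_n\pi^{-2}(n+1/2)^{-2}=\tfrac12$, and the cross term via the secant-type expansion (your $\operatorname{sech}$ Mittag--Leffler identity plus the Leibniz series is exactly the paper's use of \cite[§4.3.93]{AbramowitzStegun} specialised at $\theta=0$). The only differences are notational (hyperbolic versus trigonometric form of the classical identities), and your handling of the square-root branch by analytic continuation matches the paper's continuity argument.
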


To get this expression, we use the decomposition of the Brownian motion $W$ in Fourier series, and express $\xi$ as the sum of the associated $\xi_n$, as defined in equation \eqref{eq:defxin}. They are quadratic polynomials in Gaussian variables, so their characteristic functions are explicit, namely they were given in equation \eqref{eq:Fourierxin}.

We can rephrase the expression for the characteristic function of $\xi_n$ as
\begin{gather*}
   \mathbb E\left[\mathsf e^{\mathsf i(\lambda,\mu,\nu)\cdot\xi_n}\right]
 = I_{\lambda,n}
   \exp\big( - Q_{\lambda,n}(\mu,\nu)\big), \\
\begin{aligned}
I_{\lambda,n} &:= {\left(1+\frac{\mathsf i\lambda}{\pi^2(n+1/2)^2}\right),}^{\hspace{-5pt}-1/2} &
Q_{\lambda,n}(\mu,\nu) &:= \frac{\big(\frac{(-1)^n\mu}{\pi(n+1/2)}+\nu\big)^2}{\pi^2(n+1/2)^2+\mathsf i\lambda}.
\end{aligned}
\end{gather*}
Here $\zeta\mapsto\zeta^{-1/2}$ is the analytic continuation of the usual convention on $\mathbb R^*_+$ to the half plane $\{\zeta:\Re\zeta>0\}$. As previously discussed, the characteristic function for $\xi$ is given by the convergent product
\[ \mathbb E\left[\mathsf e^{\mathsf i(\lambda,\mu,\nu)\cdot\xi}\right]
 = \prod_{n\geq0}I_{\lambda,n}\cdot\exp\left(-\sum_{n\geq0}Q_{\lambda,n}(\mu,\nu)\right)
 =: I_\lambda\exp\big(-Q_\lambda(\mu,\nu)\big). \]

Using the infinite product decomposition
\[ \cos(\theta) = \prod_{n\geq0}\left(1-\frac{\theta^2}{(n+1/2)^2\pi^2}\right) \]
for the cosine \cite[§4.3.90]{AbramowitzStegun}, we see that
\begin{equation} \label{eq:sqrtcah}
   \bigg(\prod_{n\geq0}I_{\lambda,n}\bigg)^{-2}
 = \prod_{n\geq0}\left(1+\frac{\mathsf i\lambda}{\pi^2(n+1/2)^2}\right)
 = \operatorname{cah}(\mathsf i\lambda).
\end{equation}
In particular, setting $\zeta\mapsto\sqrt{\operatorname{cah}(\zeta)}$ as the analytic function defined over $\{\zeta:\Re\zeta>-\pi^2/4\}$ that coincides with the classical square root of $\operatorname{cah}$ over $\mathbb R^*_+$, we get by continuity that
\[ I_{\lambda}=\prod_{n\geq0}I_{\lambda,n} = \frac1{\sqrt{\operatorname{cah}(\mathsf i\lambda)}}. \]

For the coefficient of $Q_{\lambda,n}$ corresponding to $\nu^2$, we use the infinite partial fraction decomposition for the cotangent
\[ \tan(\theta) = 2\theta\sum_{n\geq0}\frac1{\pi^2(n+1/2)^2-\theta^2}. \]
This relation can be deduced from the representation of the cosine as the above infinite product, by noticing $\tan$ is the derivative of $\theta\mapsto-\ln\cos(\theta)$. This gives
\[ Q_\lambda(0,1) = \sum_{n\geq0}\frac1{\pi^2(n+1/2)^2+\mathsf i\lambda}
                  = \frac12\operatorname{tah}(\mathsf i\lambda). \]

Regarding the coefficient $\mu^2$, we use the same expansion for the tangent, and the value of the sum
\[ \zeta(2) = \sum_{n>0}\frac1{n^2} = \frac{\pi^2}6. \]
Since we have
\begin{multline*}
   \sum_{n\geq0}\frac1{\pi^2(n+1/2)^2(\pi^2(n+1/2)^2-\theta^2)} \\
\begin{aligned}
&= \frac1{\theta^2}\sum_{n\geq0}\frac1{\pi^2(n+1/2)^2-\theta^2}
 - \frac1{\theta^2}\sum_{n\geq0}\frac1{\pi^2(n+1/2)^2} \\
&= \frac1{\theta^2}\cdot\frac{\tan(\theta)}{2\theta}
 - \frac1{\pi^2\theta^2}\left(\sum_{n>0}\frac1{(n/2)^2} - \sum_{n>0}\frac1{n^2}\right) \\
&= -\frac1{2\theta^2}\Big(1-\frac{\tan(\theta)}\theta\Big),
\end{aligned}
\end{multline*}
we deduce that
\[ Q_\lambda(1,0) = \sum_{n\geq0}\frac1{\pi^2(n+1/2)^2(\pi^2(n+1/2)^2+\mathsf i\lambda)}
                  = \frac1{2\mathsf i\lambda}\big(1-\operatorname{tah}(\mathsf i\lambda)\big). \]

For the last coefficient of $Q_\lambda$, we use the representation
\[ \frac1{\cos(\theta)} = 2\sum_{n\geq0}\frac{(-1)^n\pi(n+1/2)}{\pi^2(n+1/2)^2-\theta^2}. \]
for the secant, which can be deduced from $\cos(\theta)=\sin(\pi/2+\theta)$ and \cite[§4.3.93]{AbramowitzStegun} under the form
\[ \frac1{\sin(\theta)}
 = \frac1\theta+2\theta\sum_{n\geq1}\frac{(-1)^n}{\theta^2-n^2\pi^2}
 = \sum_{n\geq0}\frac{(-1)^n}{\theta+n\pi}-\sum_{n\geq0}\frac{(-1)^n}{\theta-(n+1)\pi}. \]

We can then compute the sum
\begin{multline*}
  \sum_{n\geq0}\frac{(-1)^n}{\pi(n+1/2)(\pi^2(n+1/2)^2-\theta^2)} \\
\begin{aligned}
&= \frac1{\theta^2}\sum_{n\geq0}\frac{(-1)^n\pi(n+1/2)}{\pi^2(n+1/2)^2-\theta^2}
 - \frac1{\theta^2}\sum_{n\geq0}\frac{(-1)^n\pi(n+1/2)}{\pi^2(n+1/2)^2-0^2} \\
&= -\frac1{2\theta^2}\left(1-\frac1{\cos(\theta)}\right),
\end{aligned}
\end{multline*}
from which we get
\begin{align*}
   Q_\lambda(1,1)-Q_\lambda(1,0)-Q_\lambda(0,1)
&= 2\sum_{n\geq0}\frac{(-1)^n}{\pi(n+1/2)(\pi^2(n+1/2)^2+\mathsf i\lambda)} \\
&= \frac1{\mathsf i\lambda}\left(1-\frac1{\operatorname{cah}(\mathsf i\lambda)}\right).
\end{align*}

All in all, we find the expression \eqref{eq:explicitFourier} we announced.

\end{document}